\documentclass{amsart}
\usepackage{amsmath,amsthm,amscd,amssymb,latexsym,upref,hyperref,enumerate,xfrac}

\newcommand*{\mailto}[1]{\href{mailto:#1}{\nolinkurl{#1}}}

\newtheorem{theorem}{Theorem}[section]
\newtheorem{corollary}[theorem]{Corollary}

\newtheorem{lemma}[theorem]{Lemma}
\newtheorem{proposition}[theorem]{Proposition}

\theoremstyle{definition}
\newtheorem{definition}[theorem]{Definition}
\newtheorem{remark}[theorem]{Remark}
\newtheorem{assumption}[theorem]{Assumption}
\newtheorem*{notation}{Notation}

\newcommand{\ti}{\tilde}
\newcommand{\A}{\mathcal{A}}
\newcommand{\cE}{\mathcal{E}}
\newcommand{\E}{\mathrm{e}}
\newcommand{\I}{\mathrm{i}}

\newcommand{\re}{\mathrm{Re}}
\newcommand{\spr}[2]{\langle #1 , #2 \rangle}
\newcommand{\norm}[1]{\| #1 \|}
\newcommand{\hnorm}[1]{\| #1 \|_{\mathcal{H}}}


\newcommand{\sig}{\sigma}
\newcommand{\lam}{\lambda}
\newcommand{\om}{\omega}
\newcommand{\Om}{\Omega}

\newcommand{\R}{{\mathbb R}}
\newcommand{\N}{{\mathbb N}}

\newcommand{\C}{{\mathbb C}}
\newcommand{\h}{\mathcal{H}}
\newcommand{\D}{\mathcal{D}}
\newcommand{\DA}{\mathcal{D}(\mathcal{A})}
\newcommand{\lii}{{L^2(\Omega)}}
\newcommand{\hi}{{H^1(\Omega)}}
\newcommand{\hio}{{H_0^1(\Omega)}}
\newcommand{\hii}{{H^2(\Omega)}}
\newcommand{\hiii}{{H^3(\Omega)}}
\newcommand{\liilii}{L^2(0,T;L^2(\Omega))}
\newcommand{\liihii}{L^2(0,T;H^2(\Omega))}
\newcommand{\liihi}{L^2(0,T;H^1(\Omega))}
\newcommand{\clii}{C(0,T;L^2(\Om))}

\newcommand{\chio}{C(0,T;H_0^1(\Om))}

\newcommand{\cihii}{C^1(0,T;H^2(\Om))}

\newcommand{\ciilii}{C^2(0,T;L^2(\Om))}
\newcommand{\hilii}{H^1(0,T;L^2(\Om))}
\newcommand{\hiihi}{H^2(0,T;H^1(\Om))}

\newcommand{\hiiilii}{H^3(0,T;L^2(\Om))}


\numberwithin{equation}{section}

\begin{document}
\title[Well-posedness and asymptotic behavior]{Well-posedness and asymptotic behavior of solutions for the 
Blackstock--Crighton--Westervelt equation}

\author[R.\ Brunnhuber]{Rainer Brunnhuber}
\address{Insitut f\"ur Mathematik \\ Universit\"at Klagenfurt\\
Universit\"atsstra{\ss}e 65-57\\ 9020 Klagenfurt am W\"orthersee\\ Austria}
\email{\mailto{rainer.brunnhuber@aau.at}}
\urladdr{\url{http://www.aau.at/~rabrunnh/}}

\author[B.\ Kaltenbacher]{Barbara Kaltenbacher}
\address{Insitut f\"ur Mathematik\\ Universit\"at Klagenfurt\\
Universit\"atsstra{\ss}e 65-57\\ 9020 Klagenfurt am W\"orthersee\\ Austria}
\email{\mailto{barbara.kaltenbacher@aau.at}}
\urladdr{\url{http://www.aau.at/~bkaltenb/}}


\begin{abstract}We consider a nonlinear fourth order in space 
partial differential equation arising in the context of the modeling of nonlinear acoustic wave propagation in thermally 
relaxing viscous fluids. 

We use the theory of operator semigroups in order to investigate the linearization of the underlying model and
see that the underlying semigroup is analytic. This leads to exponential decay results for the linear homogeneous equation.

Moreover, we prove local in time well-posedness of the model under the assumption that initial data are sufficiently small 
by employing a fixed point argument.
Global in time well-posedness is obtained by performing energy estimates and using the classical barrier method, again for sufficiently small initial data.

Additionally, we provide results concerning exponential decay of solutions of the nonlinear equation.
\end{abstract}

\subjclass[2010]{Primary: 35L75, 35Q35; Secondary: 35B40, 35B65}
\keywords{Nonlinear acoustics, well-posedness, asymptotic behavior.}
\thanks{Research supported by the Austrian Science Fund (FWF) under Grant P24970}

\maketitle


\section{Introduction}
The mathematical modeling of nonlinear acoustic wave propagation is a highly active field of research
(see, e.g., \cite{Jordan:2004}, \cite{KL09}, \cite{KL10}, \cite{KL11}, \cite{KLM11}, \cite{KLP12}, \cite{Rozanova:2007} and \cite{Tjotta:2001}) recently
driven by the broad range of possible applications such as the medical and industrial use of high
intensity focused ultrasound (HIFU) in lithotripsy, thermotherapy, ultrasound cleaning and sonochemistry.

The classical model equations in nonlinear acoustics are the Kuznetsov equation,
the Westervelt equation and the Kokhlov-Zabolotskaya-Kuznetsov equation which are all of second order in
time and characterized by the presence of a viscoelastic damping.

The most general of these popular models is Kuznetsov's equation
\begin{equation}\label{kuznetsov}
\psi_{tt}-c^2 \Delta \psi - b\Delta \psi_t = \left(\frac{1}{c^2} \frac{B}{2A} \left(\psi_t\right)^2+|\nabla \psi|^2\right)_t
\end{equation}
for the acoustic velocity potential $\psi$, where $c>0$ is the speed of sound, $b\geq 0$ is the diffusivity of sound
and $B/A$ is the parameter of nonlinearity. Neglecting local nonlinear effects (in the sense that the expression
$c^2\vert\nabla \psi\vert^2-(\psi_t)^2$ is sufficiently small) one arrives at the Westervelt equation
\begin{equation}\label{westervelt}
\psi_{tt}-c^2\Delta \psi - b\Delta \psi_t = \left(\frac{1}{c^2}\left(1+\frac{B}{2A}\right)(\psi_t)^2\right)_t.
\end{equation}

The Kuznetsov equation in its turn can in some sense be regarded as a simplification (for a small ratio $\nu\mbox{Pr}^{-1}$ between the kinematic
viscosity $\nu$ and the Prandtl number $\mbox{Pr}$) of the following fourth order in space equation
\begin{equation} \label{fourthordereq}
\left(\frac{\nu}{\mbox{Pr}}\Delta - \partial_t \right)
\left(\psi_{tt}-c^2\Delta\psi - b\Delta\psi_t \right) =
\left(\frac{1}{c^2} \frac{B}{2A}(\psi_t)^2 + \left|\nabla\psi \right|^2 \right)_{tt}.
\end{equation}
This equation results from the following two general models from the original paper \cite{Crighton:1979} (see equations (11), (13) there)
\begin{equation}\label{Crighton11}
-c^2\frac{\nu}{\mbox{Pr}}\Delta^2\psi+\left(\frac{\nu}{\mbox{Pr}}+b\right)\Delta\psi_{tt}
+\left(c^2\Delta\psi_t-\psi_{ttt}\right)
=\left(\left|\nabla\psi\right|^2_t+\frac{B}{2A}\psi_t\Delta\psi\right)_t
\end{equation}
and
\begin{equation} \label{Crighton13}
\left(\frac{\nu}{\mbox{Pr}}\Delta - \partial_t \right)
\left(c^2\Delta\psi -\psi_{tt} \right)
=-\left(\left|\nabla\psi\right|^2_t+\frac{B}{2A}\psi_t\Delta\psi\right)_t
\end{equation}
We replace of $\Delta \psi$ in the last term of \eqref{Crighton11}, \eqref{Crighton13} by $\frac{1}{c^2}\psi_{tt}$,
which can be justified by the main part of the differential operator (that corresponds to the wave equation $\psi_{tt}-c^2 \Delta \psi =0$).
Moreover, we consider potential diffusivity as appearing in \eqref{Crighton11}.
Therewith, equation \eqref{Crighton13} becomes \eqref{fourthordereq}.
We call \eqref{fourthordereq} Blackstock--Crighton--Kuznetsov equation.

Neglecting local nonlinear effects which are taken into account
by the gradient on the right-hand side (as it is done when reducing the Kuznetsov to the
Westervelt equation) one arrives at
\begin{equation}\label{fourthordereqnograd}
\left(\frac{\nu}{\mbox{Pr}}\Delta - \partial_t \right)
\left(\psi_{tt}-c^2\Delta\psi - b\Delta\psi_t \right) =
\left(\frac{1}{c^2}\left(1+\frac{B}{2A}\right)(\psi_t)^2\right)_{tt}
\end{equation}
which we call Blackstock--Crighton--Westervelt equation.

We abbreviate $a=\nu \mbox{Pr}^{-1}$ and $\sigma=\frac{1}{c^2} \left(1+\frac{B}{2A}\right)$.
Our object of investigation is the initial boundary value problem
\begin{equation}\label{abbreq}
\begin{cases}
(a\Delta - \partial_t)\left(\psi_{tt}-c^2\Delta\psi - b\Delta\psi_t \right)=\sigma(\psi_t^2)_{tt}  & \text{ in } \Omega \times (0,T]
\\
(\psi,\psi_t,\psi_{tt})=(\psi_0, \psi_1, \psi_2) & \text{ on } \Omega \times \{t=0\}
\\
\psi =0  & \text{ on } \partial\Omega \times [0,T]
.\\
\end{cases}
\end{equation}
on an open and bounded subset $\Om\subset \R^d, d\in\{1,2,3\}$ with smooth boundary $\partial \Omega$, where $\psi_0, \psi_1, \psi_2: \Om \rightarrow \R$ are given and
$\psi:\overline{\Om}\times [0,T] \rightarrow \R$ is the unknown, $\psi=\psi (x,t)$.

The restriction on the dimension of $\Om$ is imposed in order to be able to use various embedding theorems.
Increasing the space dimension to $d\geq 4$ would not be of relevance in applications anyway.

Throughout this paper we will assume $b>0$ since this essential for several results we intend we prove (cf. Remarks \ref{rem:b} and \ref{rem:spectrum}).

We will prove local and global in time well-posedness and show that solutions decay exponentially with respect to certain norms.

In Section 2 we introduce the notation which will be used throughout this paper. We mention function spaces we
will make use of, e.g., $L^p$-spaces, Sobolev and Besov spaces and recall (respectively, refer to) embedding theorems needed for our proofs.

In Section 3 we consider the linearization of \eqref{fourthordereqnograd} in a general abstract form. We apply the
theory of operator semigroups to \eqref{fourthordereqnograd} and show that the underlying semigroup is analytic
on two different phase spaces which leads, together with certain spectral properties of the generator, to two
exponentially decaying energy functionals. Moreover, we provide existence and uniqueness results for the
solutions of the linear model.

In Section 4 we perform energy estimates as a preparation for the proof of global in time well-posedness.

Section 5 is devoted to the fully nonlinear equation \eqref{fourthordereqnograd}. We prove local in time well-posedness
by employing a fixed point argument. The space which we use in this fixed point argument is obtained by
combining regularity results for the linearized Westervelt equation and the heat equation. We achieve local existence and uniqueness
of solutions provided the given initial data are sufficiently small.
Global in time well-posedness is obtained by using the energy estimates from Section 4 together with classical barrier's method
which finally leads to an exponential decay result for the higher order energy functional introduced in Section 4.

Appendix A provides an overview of facts from the theory of operator semigroups (cf. \cite{EngelNagel:1999}, \cite{Pazy:1983}) which are
used in Section 3.


\section{Notation and preliminaries}
Suppose $\Om \subset \R^d$, $d\in\{1,2,3\}$ is an open and bounded domain with smooth boundary $\partial \Om$.

We denote by $L^p(\Om)$ the space of (classes of) Lebesgue integrable functions $\Om \rightarrow \R$ with exponent $p\in[1,\infty]$.
The norm of a function $u\in L^p(\Om)$ will be denoted by $\|u\|_{L^p(\Om)}$.
In the special case $p=2$ we simply write $\|u\|:=\|u\|_{L^2(\Om)}$ for the norm of a function $u\in L^2(\Om)$ and $\spr{u}{v}:=\spr{u}{v}_{L^2(\Om)}$ for
the inner product of $u,v \in L^2(\Om)$.

More generally, we will always write $\|u\|_X$ for the $X$-norm of a function $u\in X$ and $C_{X\hookrightarrow Y}$ for the norm of the
embedding $X\hookrightarrow Y$.

The space $C^k(0,T;X)$ consists of all $k$-times continuously differentiable functions $u: [0,T] \rightarrow X$ where $k\in \N_0$.

By $\h$ we denote a (separable) Hilbert space equipped with the inner product $\spr{.}{..}_\h$ and the induced norm $\hnorm{.}$.
In applications, we always think of $\h=L^2(\Om)$.

We write $W^{m,p}(\Om)$ for the (fractional) Sobolev space of order $m\in\R_+$ and exponent $p\in [1,\infty)$. As usual,
$H^m(\Om):=W^{m,2}(\Om)$. The space $H_0^1(\Om)$ contains all functions in $\hi$ with zero trace. For additional information on
Sobolev spaces, in particular embedding theorems, we refer to \cite{AdamsFournier:2003}
and \cite{DiNezzaPalatucciValdinoci:2011}.

Moreover, by $\A: \DA \rightarrow \h$ we denote a self-adjoint, strictly positive and closed operator whose domain of definition
$\DA$ is dense in $\h$ and always keep in mind that in applications we think of $\A = -\Delta$ being the negative Dirichlet Laplacian defined on
$\DA= H^2(\Om) \cap H_0^1(\Om)$.
\begin{remark}
As $\A$ was assumed to be strictly positive, we conclude that the spectrum of $\A$, $\sig(\A) \subset \R_+$, is bounded
from below and may therefore set $\underline{\mu}:=\min\sig(\A) >0$. Moreover, on the strength of positivity,
fractional powers $\A^\Theta$, $\Theta \geq 0$ of $\A$ are well-defined and $\A^\Theta$ is again a
strictly positive self-adjoint operator with domain of definition $\D(\A^\Theta)$.
Note that $\D(\A^{\Theta_1}) \subset \D(\A^{\Theta_2})$ for $\Theta_1 > \Theta_2$.
Furthermore, as $\DA$ was assumed to be dense in $\h$, we also have that
$\D(\A^\Theta)$ is dense in $\h$ for $0<\Theta<1$.
\end{remark}
\begin{assumption}\label{Aembeddings}
We assume the embeddings
\begin{align}
\D(\A^{\nu}) &\hookrightarrow \h, \qquad&& \mbox{with } \hnorm{v} \leq C_{\D(\A^\nu) \hookrightarrow \h} \hnorm{\A^\nu v}, \nu=\tfrac12, 1, \tfrac32,\label{Poincare}
\end{align}\begin{align}
\DA &\hookrightarrow L^\infty(\Om), && \mbox{with } \norm{v}_{L^\infty(\Om)} \leq C_{\DA \hookrightarrow L^\infty} \hnorm{\A v}. \label{linfDA}
\end{align}
Note that, in applications, \eqref{Poincare} corresponds to Poincar\'e's inequality, i.e. to the (repeatedly used) embedding
$H_0^1(\Om) \hookrightarrow L^2(\Om)$, and \eqref{linfDA} corresponds to the embedding $H^2(\Om) \cap\hio\hookrightarrow L^\infty(\Om)$.
\end{assumption}

Next, we recall the concept of Besov spaces and list embeddings we will make use of later on.
\begin{definition}[\cite{AdamsFournier:2003}]\label{def:Besov}
(i) Suppose $X_i$, $i=1,2$ are Banach spaces equipped with the norms $\|.\|_i$, $i=1,2$.
We say that a Banach space $X$ is intermediate between $X_1$ and $X_2$ if there exist the
embeddings $X_1 \cap X_2 \rightarrow X \rightarrow X_1+X_2$.\\
(ii) Suppose $s\in (0,\infty)$, $p \in [0,\infty)$ and $q \in [1,\infty]$. Moreover, let $m$
be the smallest integer larger than $s$. We define the Besov space $B^{s;p,q}(\Om)$ to be
the intermediate space between $L^p(\Om)$ and $W^{m,p}(\Om)$ corresponding to
the $J$ method of interpolation with $\theta=s/m$, specifically:
\begin{equation*}
B^{s;p,q}(\Om)=\left( L^p(\Om), W^{m,p}(\Om)\right)_{s/m,q;J}.
\end{equation*}
\end{definition}
\begin{remark}\label{rem:Besov}
(i) The spaces $X_1 \cap X_2$ and $X_1+X_2=\{u=u_1+u_2:u_1 \in X_1, u_2 \in X_2\}$ mentioned in
Definition \ref{def:Besov} are Banach spaces with respect to the norms
$\|u\|_{X_1\cap X_2}=\text{max}\{\|u\|_{X_1}, \|u\|_{X_2}\}$ and
$\|u\|_{X_1+ X_2}= \text{inf}\{\|u_1\|_{X_1}+\|u_2\|_{X_2}: u=u_1+u_2,u_1 \in X_1, u_2 \in X_2\}$
(cf. \cite[7.7]{AdamsFournier:2003}). \\
(ii) We have the following embeddings (cf. \cite[7.33]{AdamsFournier:2003}):
\begin{align*}
B^{m;p,p}(\Om)&\hookrightarrow W^{m,p}(\Omega)\hookrightarrow B^{m;p,2}(\Om)& \quad &\text{for } 1<p\leq2, \\
B^{m;p,2}(\Om)&\hookrightarrow W^{m,p}(\Omega)\hookrightarrow B^{m;p,p}(\Om)& \quad &\text{for } 2\leq p\leq\infty.
\end{align*}
Therefore, if $p=2$, we have $B^{m,2,2}(\Om)=W^{m,2}(\Om)=H^m(\Om)$.
\end{remark}

\section{The linear problem - semigroup framework}
Before turning to the nonlinear analysis, we consider the linearization of
\eqref{fourthordereqnograd} in a generalized abstract form.

We investigate the abstract linear partial differential equation
\begin{equation}\label{absfourthordereq}
\left(-a\mathcal{A} - \partial_t \right)
\left(\psi_{tt}(t)+c^2\mathcal{A}\psi(t) + b\mathcal{A}\psi_t(t) \right)=f(t)
\end{equation}
defined on the Hilbert space $\h$ with initial conditions
\begin{equation}\label{initialcond}
\psi(0)=\psi_0, \quad \psi_t(0)=\psi_1, \quad \psi_{tt}(0)=\psi_2.
\end{equation}
\subsection{Semigroup methods for the homogeneous equation}
We are first going to treat the homogeneous version of \eqref{absfourthordereq}, i.e.
\begin{equation}\label{abshomfourthordereq}
\left(-a\mathcal{A} - \partial_t \right)
\left(\psi_{tt}(t)+c^2\mathcal{A}\psi(t) + b\mathcal{A}\psi_t(t) \right)=0
\end{equation}
defined on $\h$ with initial conditions \eqref{initialcond}.
\begin{notation} Let $x=(x_1,x_2,x_3)^T$, $y=(y_1,y_2,y_3)^T$. We introduce the spaces
\begin{align}
H_0 &:= \h \times \h \times \h,\\
H_1 &:= \D(\A^{1/2}) \times \h \times \h,\\
H_2 &:= \DA \times \h \times \h,
\end{align}
equipped with the inner products
\begin{align}
\spr{x}{y}_{H_0}&:=\spr{x_1}{y_1}_\h + \spr{x_2}{y_2}_\h + \spr{x_3}{y_3}_\h,~~& x,y \in H_0,\\
\spr{x}{y}_{H_1}&:=\spr{\A^{1/2}x_1}{\A^{1/2}y_1}_\h + \spr{x_2}{y_2}_\h + \spr{x_3}{y_3}_\h,~~&x,y \in H_1,\\
\spr{x}{y}_{H_2}&:= \frac{\alpha^2 b^2}{4}\spr{\A x_1}{\A y_1}_\h + \spr{x_2}{y_2}_\h + \spr{x_3}{y_3}_\h, ~~ &x,y \in H_2,\label{sprh1}
\end{align}
respectively. The respective norms are given by $\|x\|_{H_i}=\sqrt{\spr{x}{x}_{H_i}},~i\in\{1,2,3\}$.
The constant $\alpha >0$ in \eqref{sprh1} will be determined later.
\end{notation}
The partial differential equation \eqref{abshomfourthordereq} can be written as a first-order system of the form
\begin{equation}\label{abstractODE}
\Psi_t(t)=A\Psi(t), \qquad t>0
\end{equation}
with initial conditions
\begin{equation}\label{initialcondmatrix}
\Psi(0) = \Psi_0 = \begin{pmatrix} \psi_0 \\ \psi_1 \\ \psi_2 + c^2 \mathcal{A}\psi_0 + b \mathcal{A}\psi_1 \end{pmatrix} \in H_i,~~i\in\{1,2,3\}
\end{equation}
if we choose
\begin{equation}\label{generator1}
A= \begin{pmatrix}
0 & I & 0 \\ -c^2 \mathcal{A} & -b\mathcal{A}  & I \\  0 & 0 & -a \mathcal{A}
\end{pmatrix}, \qquad \D(A)=\DA \times \DA \times \DA
\end{equation}
and
\begin{equation}\label{psi:big}
\Psi(t)= \begin{pmatrix} \psi(t) \\ \psi_t(t) \\ \psi_{tt}(t) +c^2 \mathcal{A}\psi(t) +b\mathcal{A}\psi_t(t)\end{pmatrix}.
\end{equation}
\subsubsection{Generation on $H_1$}
We decompose $A$, $A=\tilde{A}_1+B_1$, where
\begin{equation}\label{decomposition:triggiani}
\tilde{A}_1=\begin{pmatrix} 0 & I & 0 \\ -c^2\A & -b\A & 0 \\ 0 & 0 & -a\A \end{pmatrix}
\qquad \text{and} \qquad
B_1= \begin{pmatrix} 0 & 0 & 0 \\ 0 & 0 & I \\ 0 & 0 & 0 \end{pmatrix}.
\end{equation}
Operators of the type
\begin{equation*}
\begin{pmatrix}
0 & I \\ -c^2\A & -b\A
\end{pmatrix}
\end{equation*}
have been extensively investigated, e.g., in \cite{ChenRussell:1982}, \cite{ChenTriggiani:1989} and \cite{LiangXiao:1997}
and we will modify some arguments used therein in order to show that $A$ given by \eqref{generator1} generates an analytic
semigroup on $H_1$.

First, note that $\tilde{A}_1$ acting on $H_1$ is essentially equivalent to the operator
\begin{equation}\label{hatA}
\hat{A}_1=\begin{pmatrix}
0 & c\A^{1/2} & 0 \\ -c\A^{1/2} & -b\A & 0 \\ 0 & 0 & -a\A
\end{pmatrix},
\qquad \D(\hat{A}_1)=\D(\A^{1/2}) \times \DA \times \DA
\end{equation}
acting on $H_0$ (cf. \cite{ChenRussell:1982}, \cite{LiangXiao:1997}).
\begin{lemma}
The operator $\hat{A}_1$ given by \eqref{hatA} is dissipative on $H_0$, i.e.
\begin{equation*}
\norm{(\lam I - \hat{A}_1)x}_{H_0} \geq \lam \norm{x}_{H_0} \qquad \text{for all } x\in\D(\hat{A}_1) \text{ and } \lam >0.
\end{equation*}
\end{lemma}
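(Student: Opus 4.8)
The plan is to establish dissipativity by the standard computation: estimate $\norm{(\lambda I-\hat A_1)x}_{H_0}^2$ from below by $\lambda^2\norm{x}_{H_0}^2$, which follows once we show $\re\spr{(\lambda I-\hat A_1)x}{x}_{H_0}\geq\lambda\norm{x}_{H_0}^2$, i.e. $\re\spr{\hat A_1 x}{x}_{H_0}\leq 0$ for all $x\in\D(\hat A_1)$. So the heart of the matter is to check that $\hat A_1$ is (formally) dissipative in the $H_0=\h\times\h\times\h$ inner product, and the skew-symmetric off-diagonal coupling $c\A^{1/2}$ between the first two components is designed precisely so that this works.

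First I would write $x=(x_1,x_2,x_3)^T\in\D(\hat A_1)=\D(\A^{1/2})\times\DA\times\DA$ and compute
\begin{equation*}
\spr{\hat A_1 x}{x}_{H_0}=\spr{c\A^{1/2}x_2}{x_1}_\h+\spr{-c\A^{1/2}x_1-b\A x_2}{x_2}_\h+\spr{-a\A x_3}{x_3}_\h.
\end{equation*}
Since $\A$ (hence $\A^{1/2}$) is self-adjoint, $\spr{c\A^{1/2}x_2}{x_1}_\h=\spr{x_2}{c\A^{1/2}x_1}_\h=\overline{\spr{c\A^{1/2}x_1}{x_2}_\h}$, so the two cross terms $\spr{c\A^{1/2}x_2}{x_1}_\h$ and $-\spr{c\A^{1/2}x_1}{x_2}_\h$ are complex conjugates of each other up to sign and their real parts cancel. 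What remains is
\begin{equation*}
\re\spr{\hat A_1 x}{x}_{H_0}=-b\spr{\A x_2}{x_2}_\h-a\spr{\A x_3}{x_3}_\h=-b\hnorm{\A^{1/2}x_2}^2-a\hnorm{\A^{1/2}x_3}^2\leq 0,
\end{equation*}
using $a,b>0$ and the strict positivity of $\A$. Then for $\lambda>0$,
\begin{equation*}
\norm{(\lambda I-\hat A_1)x}_{H_0}^2=\lambda^2\norm{x}_{H_0}^2-2\lambda\re\spr{\hat A_1 x}{x}_{H_0}+\norm{\hat A_1 x}_{H_0}^2\geq\lambda^2\norm{x}_{H_0}^2,
\end{equation*}
and taking square roots gives the claimed inequality $\norm{(\lambda I-\hat A_1)x}_{H_0}\geq\lambda\norm{x}_{H_0}$.

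There is essentially no serious obstacle here: the computation is routine linear algebra in a Hilbert space, and the only subtlety is the bookkeeping with the symmetric square root $\A^{1/2}$ and with the domains — one must note that $\A^{1/2}x_1$ and $\A^{1/2}x_2$ are legitimate (since $x_1\in\D(\A^{1/2})$, $x_2\in\DA\subset\D(\A^{1/2})$) and that $\spr{\A x_2}{x_2}_\h=\hnorm{\A^{1/2}x_2}^2$ requires $x_2\in\DA$, which holds. If anything, the point worth emphasizing is the structural reason the estimate works, namely that passing from $\tilde A_1$ on $H_1$ to $\hat A_1$ on $H_0$ converts the non-symmetric pair $\binom{0\ \ I}{-c^2\A\ \ -b\A}$ into the skew-adjoint-plus-dissipative form $\binom{0\ \ c\A^{1/2}}{-c\A^{1/2}\ \ -b\A}$, so that all the ``energy loss'' is captured by the diagonal damping terms $-b\A$ and $-a\A$; the gradient/coupling terms contribute nothing to the real part. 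The decoupled third component $-a\A x_3$ is trivially dissipative on its own.
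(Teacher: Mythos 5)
Your proof is correct and rests on the same mechanism as the paper's: the skew-adjoint coupling terms $\pm c\A^{1/2}$ cancel and the diagonal damping terms $-b\A$, $-a\A$ supply the sign. The paper expands $\norm{(\lam I-\hat A_1)x}_{H_0}^2$ directly and absorbs the cross terms into the complete square $\hnorm{c\A^{1/2}x_1+b\A x_2}^2$, whereas you pass through the equivalent Hilbert-space criterion $\re\spr{\hat A_1x}{x}_{H_0}\leq 0$; this is only a difference in bookkeeping, not in substance.
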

\begin{proof} We proceed analogously to \cite{ChenRussell:1982}. 
Let $x=(x_1,x_2,x_3)^T \in \D(\hat{A}_1)$. The straightforward estimate
\begin{align*}
\norm{(\lam I - \hat{A}_1)x}_{H_0}^2 =~&\lam^2 \left(\hnorm{x_1}^2+\hnorm{x_2}^2 + \hnorm{x_3}^2\right)+\hnorm{c\A^{1/2} x_1}^2\\
&+\spr{b\A x_2}{c\A^{1/2} x_1}_\h+\spr{c\A^{1/2} x_1}{b\A x_2}_\h +\hnorm{b\A x_2}^2\\
&+\hnorm{c\A^{1/2} x_2}^2 +2 \lam \hnorm{b\A x_2}^2 + 2\lam \hnorm{a\A x_3}^2 + \hnorm{a\A x_3}^2\\
\geq&~\lam^2 \norm{x}_{H_0}^2 + \hnorm{c\A^{1/2} x_1 + b\A x_2}^2\\
\geq&~\lam^2 \norm{x}_{H_0}^2
\end{align*}
yields the desired result.
\end{proof}
\begin{proposition}
The operator $\tilde{A}_1$ generates a strongly continuous semigroup of contractions on $H_1$.
\end{proposition}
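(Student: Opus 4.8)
The plan is to exploit the block-triangular structure of $\tilde{A}_1$ together with the unitary equivalence between $\tilde{A}_1$ on $H_1$ and $\hat{A}_1$ on $H_0$ already mentioned after \eqref{hatA}. Since $\tilde{A}_1$ is block-upper-triangular with the $2\times2$ wave-type block $\bigl(\begin{smallmatrix} 0 & I \\ -c^2\A & -b\A \end{smallmatrix}\bigr)$ in the upper-left corner (in the $H_1$ norm, passing to $\hat A_1$ turns this into the skew-symmetric-plus-dissipative block $\bigl(\begin{smallmatrix} 0 & c\A^{1/2} \\ -c\A^{1/2} & -b\A \end{smallmatrix}\bigr)$) and the dissipative block $-a\A$ in the lower-right corner, the whole operator should generate a contraction semigroup on $H_0$, hence on $H_1$ by equivalence. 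Concretely, I would verify the hypotheses of the Lumer--Phillips theorem (stated in Appendix A): dissipativity and range condition.

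First I would record that $\tilde{A}_1$ on $H_1$ and $\hat{A}_1$ on $H_0$ are unitarily equivalent via the isomorphism $(x_1,x_2,x_3)\mapsto(\A^{1/2}x_1,x_2,x_3)$, which is an isometry from $H_1$ onto $H_0$ by the definition of the $H_1$-inner product. Hence it suffices to show $\hat{A}_1$ generates a contraction semigroup on $H_0$. Dissipativity of $\hat{A}_1$ is exactly the content of the preceding Lemma. It remains to check the range condition: $\mathrm{range}(\lam I - \hat{A}_1) = H_0$ for some (equivalently, all) $\lam>0$. Given $(y_1,y_2,y_3)^T\in H_0$, one must solve $(\lam I - \hat{A}_1)(x_1,x_2,x_3)^T = (y_1,y_2,y_3)^T$, i.e.
\begin{align*}
\lam x_1 - c\A^{1/2}x_2 &= y_1,\\
c\A^{1/2}x_1 + (\lam I + b\A)x_2 &= y_2,\\
(\lam I + a\A)x_3 &= y_3.
\end{align*}
The third equation is solved immediately by $x_3 = (\lam I + a\A)^{-1}y_3\in\DA$, since $\lam I + a\A$ is boundedly invertible ($\A$ strictly positive). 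For the first two, substitute $x_1 = \lam^{-1}(y_1 + c\A^{1/2}x_2)$ into the second to obtain $\bigl(\lam I + b\A + \tfrac{c^2}{\lam}\A\bigr)x_2 = y_2 - \tfrac{c}{\lam}\A^{1/2}y_1$; the operator $\lam I + (b+c^2/\lam)\A$ is self-adjoint, strictly positive, hence boundedly invertible with range all of $\h$, and $\A^{1/2}y_1\in\h$ only if $y_1\in\D(\A^{1/2})$ — here one uses that $\A^{1/2}(\lam I+(b+c^2/\lam)\A)^{-1}$ extends to a bounded operator on $\h$ (by the spectral theorem, the symbol $\mu^{1/2}/(\lam+(b+c^2/\lam)\mu)$ is bounded on $\sigma(\A)$), so $x_2\in\DA$ is well defined from $y_1\in\h$, and then $x_1 = \lam^{-1}(y_1 + c\A^{1/2}x_2)\in\D(\A^{1/2})$. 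Thus $(x_1,x_2,x_3)^T\in\D(\hat{A}_1)$ and the range condition holds.

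The Lumer--Phillips theorem then yields that $\hat{A}_1$ generates a strongly continuous semigroup of contractions on $H_0$, and transporting back through the isometry gives the claim for $\tilde{A}_1$ on $H_1$. The only slightly delicate point — and the one I would write out with care — is the boundedness of the auxiliary operators $\A^{1/2}(\lam I + (b+c^2/\lam)\A)^{-1}$ and $\A^{1/2}(\lam I+(b+c^2/\lam)\A)^{-1}\A^{1/2}$ needed to see that the solution $(x_1,x_2,x_3)$ genuinely lands in $\D(\hat{A}_1)=\D(\A^{1/2})\times\DA\times\DA$ for \emph{arbitrary} data in $H_0$ rather than just for smooth data; this is where $b>0$ is essential (cf. Remark \ref{rem:b}), since for $b=0$ the gain of regularity from the resolvent degrades and the range condition fails on the natural space. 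Everything else is routine manipulation with the spectral calculus of the strictly positive self-adjoint operator $\A$.
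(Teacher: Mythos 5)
Your overall route is exactly the paper's: dissipativity of $\hat{A}_1$ (the preceding lemma) plus density plus the range condition, then Lumer--Phillips on $H_0$, and transport back to $H_1$ through the isometry $(x_1,x_2,x_3)\mapsto(\A^{1/2}x_1,x_2,x_3)$. The paper merely asserts the range condition; you try to verify it, and this is where your argument has a genuine gap. From your reduced equation $x_2=(\lam I+\beta\A)^{-1}\bigl(y_2-\tfrac{c}{\lam}\A^{1/2}y_1\bigr)$ with $\beta=b+c^2/\lam$, the term involving $y_1$ equals $\A^{1/2}(\lam I+\beta\A)^{-1}y_1$, which for general $y_1\in\h$ lies only in $\D(\A^{1/2})$, not in $\DA$: the operator you would need to be bounded is $\A^{3/2}(\lam I+\beta\A)^{-1}$, whose symbol $\mu^{3/2}/(\lam+\beta\mu)\sim\mu^{1/2}/\beta$ is unbounded on $\sig(\A)$. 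Consequently $x_1=\lam^{-1}(y_1+c\A^{1/2}x_2)$ also fails to land in $\D(\A^{1/2})$. In fact the range condition cannot hold with the stated domain: for any $(x_1,x_2,x_3)\in\D(\A^{1/2})\times\DA\times\DA$ the first component $\lam x_1-c\A^{1/2}x_2$ of $(\lam I-\hat{A}_1)x$ lies in $\D(\A^{1/2})$, so the range of $\lam I-\hat{A}_1$ is a proper (dense) subspace of $H_0$.

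The proposition itself survives, but the correct repair is one of the following: (a) invoke the version of Lumer--Phillips for closable operators --- a densely defined dissipative operator with \emph{dense} range of $\lam_0 I-\hat{A}_1$ has a closure generating a contraction semigroup --- and your elimination argument, applied first to data $y_1\in\D(\A^{1/2})$, does establish dense range; or (b) equip the wave block with the coupled domain $\{(x_1,x_2):x_2\in\D(\A^{1/2}),\ cx_1+b\A^{1/2}x_2\in\D(\A^{1/2})\}$, on which the range condition holds exactly (this is the setting of Chen--Russell and Chen--Triggiani, to which the paper points). As written, however, your claim that the constructed $(x_1,x_2,x_3)$ belongs to $\D(\hat{A}_1)=\D(\A^{1/2})\times\DA\times\DA$ for arbitrary data in $H_0$ is false, so the application of Lumer--Phillips is not yet justified.
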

\begin{proof}
Observe that $\hat{A}_1$ is densely defined on $H_0$. Furthermore, there exists a $\lam_0 >0$ such that
the range of $\lam_0 I -\hat{A}_1$ equals $H_0$. On the strength of the Lumer-Phillips Theorem we conclude that
$\hat{A}_1$ generates a strongly continuous semigroup of contractions on $H_0$ and thus $\tilde{A}_1$ generates a strongly continuous
semigroup of contractions on $H_1$.
\end{proof}
The resolvent $R(\lam,\tilde{A}_1)$ of $\tilde{A}_1$ is explicitly given by
\begin{equation}\label{tildeAresolvent}
R(\lam,\tilde{A}_1)=\begin{pmatrix}
V(\lam)^{-1}(\lam I+b\A) & V(\lam)^{-1} & 0 \\ -V(\lam)^{-1}c^2\A & \lam V(\lam)^{-1} & 0 \\ 0 & 0 & (\lam I +a\A)^{-1}
\end{pmatrix}
\end{equation}
where we abbreviate $V(\lam)=\lam^2 I +b\lam \A +c^2 \A$. Note that $V(\lam)$ and $\A$ commute.
\begin{lemma}
\label{lem:triggiani} We have the following uniform bounds for all $\lam$ with $\re(\lam)>0$:
\begin{align}
\norm{\lam^2 V(\lam)^{-1}} &\leq C_{\underline{\mu},b,c} :=
\begin{cases}
1 & \mbox{if } \frac{b^2}{2c^2} \underline{\mu}\geq 1,\\
\left[\frac{b^2}{c^2}\underline{\mu} \left(1-\frac{b^2}{4c^2}\underline{\mu}\right)\right]^{-1/2} & \mbox{if } \frac{b^2 }{2c^2}\underline{\mu}<1,
\end{cases}\\
\norm{b\lam \A V(\lam)^{-1}} &\leq 1. \label{lamv}
\end{align}
where $\underline{\mu}:=\min \sigma(\A)$
\end{lemma}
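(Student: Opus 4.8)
The plan is to diagonalise everything via the spectral theorem for $\A$. Since $\A$ is self-adjoint and strictly positive and $V(\lam)=\lam^2 I+(b\lam+c^2)\A$, the bounded operators $\lam^2 V(\lam)^{-1}$ and $b\lam\A V(\lam)^{-1}$ are obtained from $\A$ by the functional calculus applied to the scalar functions $\mu\mapsto\lam^2/d_\mu(\lam)$ and $\mu\mapsto b\mu\lam/d_\mu(\lam)$, where $d_\mu(\lam):=\lam^2+b\mu\lam+c^2\mu$. (For $\re(\lam)>0$ and $\mu>0$ one has $d_\mu(\lam)\neq0$, since the quadratic $\lam\mapsto d_\mu(\lam)$ has both zeros in the open left half-plane; in particular $V(\lam)$ is boundedly invertible, consistently with \eqref{tildeAresolvent}.) Hence
\[
\norm{\lam^2 V(\lam)^{-1}}=\sup_{\mu\in\sig(\A)}\frac{|\lam|^2}{|d_\mu(\lam)|},
\qquad
\norm{b\lam\A V(\lam)^{-1}}=\sup_{\mu\in\sig(\A)}\frac{b\mu\,|\lam|}{|d_\mu(\lam)|},
\]
and the lemma reduces to two elementary pointwise bounds, uniform in $\mu\geq\underline{\mu}$ and in $\lam$ with $\re(\lam)\geq0$.

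Next I would set $s:=b\mu\geq0$, $p:=c^2\mu>0$, $t:=|\lam|^2$, and expand $|d_\mu(\lam)|^2$ in two ways — grouping $\lam^2+s\lam$ together in one, and $\lam^2+p$ together in the other — using $\re(\lam^2)=2(\re(\lam))^2-t$ and $|\lam^2+s\lam|^2=t\,(t+2s\,\re(\lam)+s^2)$. This should produce the identities
\[
|d_\mu(\lam)|^2 = t^2+(s^2-2p)\,t+p^2+2\re(\lam)\bigl(st+2p\,\re(\lam)+ps\bigr)
\]
and
\[
|d_\mu(\lam)|^2 = |\lam^2+p|^2+2s\,\re(\lam)\,(t+p)+s^2 t .
\]
For $\re(\lam)\geq0$ both correction terms are nonnegative, so $|d_\mu(\lam)|^2\geq t^2+(s^2-2p)t+p^2$ and $|d_\mu(\lam)|^2\geq s^2 t$. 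The second inequality immediately gives $b\mu|\lam|/|d_\mu(\lam)|\leq1$ for every $\mu$, which is \eqref{lamv}.

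For the first bound it then remains to estimate $t^2/\bigl(t^2+(s^2-2p)t+p^2\bigr)$ uniformly in $t\geq0$ and $\mu\geq\underline{\mu}$. If $\tfrac{b^2}{2c^2}\underline{\mu}\geq1$, then $s^2=b^2\mu^2\geq2c^2\mu=2p$ for every $\mu\geq\underline{\mu}$, the denominator is $\geq t^2$, and the supremum is $\leq1$. Otherwise I would split $\sig(\A)$ into $\{\mu:s^2\geq2p\}$, where again the quotient is $\leq1$, and $\{\mu:s^2<2p\}$; on the latter set one needs $C>1$, and $t^2\leq C^2\bigl(t^2+(s^2-2p)t+p^2\bigr)$ for all $t\geq0$ is equivalent to non-negativity of the upward parabola $(C^2-1)t^2+C^2(s^2-2p)t+C^2p^2$, whose vertex lies at a positive $t$, hence to a non-positive discriminant, i.e.\ to
\[
C^2\ \geq\ \frac{4p^2}{4p^2-(s^2-2p)^2}\ =\ \frac{4p^2}{s^2(4p-s^2)}\ =\ \Bigl[\tfrac{b^2}{c^2}\mu\bigl(1-\tfrac{b^2}{4c^2}\mu\bigr)\Bigr]^{-1},
\]
a quantity which is itself $\geq1$ (equivalently $(2p-s^2)^2\geq0$). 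Since $\mu\mapsto\tfrac{b^2}{c^2}\mu(1-\tfrac{b^2}{4c^2}\mu)$ is strictly increasing on $\{\mu:b^2\mu<2c^2\}$, the largest value of the required $C^2$ over all admissible $\mu$, and hence over $\sig(\A)\subset[\underline{\mu},\infty)$, is attained at $\mu=\underline{\mu}$ and equals $C_{\underline{\mu},b,c}^2$; combined with the case $s^2\geq2p$ this gives the first bound.

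The computational heart is the middle step: one must arrange the two expansions of $|d_\mu(\lam)|^2$ so that all $\re(\lam)$-dependent remainders come out with a favourable sign, after which everything is a one-variable calculus exercise. The only genuine subtlety is the last step's passage from a pointwise-in-$\mu$ bound to one uniform over the whole spectrum, which is exactly what the monotonicity of $\mu\mapsto\tfrac{b^2}{c^2}\mu(1-\tfrac{b^2}{4c^2}\mu)$ supplies; this is also where $b>0$ and the precise form of the constant enter.
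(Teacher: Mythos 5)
Your argument is correct: the reduction via the spectral theorem to the scalar quantities $|\lam|^2/|d_\mu(\lam)|$ and $b\mu|\lam|/|d_\mu(\lam)|$ is legitimate for a self-adjoint, strictly positive $\A$, both algebraic identities for $|d_\mu(\lam)|^2$ check out (I verified them by writing $\lam=x+\I y$, $t=x^2+y^2$), the sign of the $\re(\lam)$-dependent remainders is as you claim, and the discriminant computation together with the monotonicity of $\mu\mapsto \tfrac{b^2}{c^2}\mu\bigl(1-\tfrac{b^2}{4c^2}\mu\bigr)$ on $\{b^2\mu<2c^2\}$ correctly identifies $\underline{\mu}$ as the worst case and reproduces the stated constant. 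The route, however, is genuinely different from the paper's: the authors give no computation at all and simply invoke Proposition 3.1 of Chen--Triggiani (1989) with the substitutions $A=c^2\A$, $\rho=\tfrac{b}{2c^2}$, $\alpha=1$, so the entire analytic content is outsourced to that reference. Your version makes the lemma self-contained and transparent --- in particular it shows exactly where $b>0$ and the lower spectral bound $\underline{\mu}$ enter, which the paper only asserts in Remark \ref{rem:b} --- at the cost of about a page of elementary but fiddly algebra that the citation avoids. Two cosmetic points: the lemma is stated for $\re(\lam)>0$, so you need not worry about non-vanishing of $d_\mu$ on the imaginary axis; and it would be worth stating explicitly that you only use the inequality $\norm{f(\A)}\le\sup_{\mu\in\sig(\A)}|f(\mu)|$, not the equality, since that is all the functional calculus is needed for.
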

\begin{proof}
Choose $A=c^2\A$, $\rho=\frac{b}{2c^2}$ and $\alpha =1$ in \cite[Proposition 3.1]{ChenTriggiani:1989}.
\end{proof}
\begin{proposition}
The operator $\tilde{A}_1$ generates an analytic semigroup on $H_1$.
\end{proposition}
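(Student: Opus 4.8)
The plan is to prove analyticity of $\tilde A_1$ on $H_1$ via the standard resolvent characterization: a strongly continuous semigroup with generator $\tilde A_1$ is analytic if and only if there is a constant $M>0$ such that $\norm{\lambda R(\lambda,\tilde A_1)} \leq M$ for all $\lambda$ with $\re(\lambda) > 0$ (see Appendix A and \cite{EngelNagel:1999}, \cite{Pazy:1983}). Since the previous proposition already gives that $\tilde A_1$ generates a strongly continuous contraction semigroup, it suffices to establish this uniform resolvent bound. The block-diagonal structure of $R(\lambda,\tilde A_1)$ in \eqref{tildeAresolvent} is the key feature that makes this tractable: the third diagonal entry $(\lambda I + a\A)^{-1}$ decouples completely, and since $\A$ is self-adjoint and strictly positive, the spectral theorem gives $\norm{\lambda(\lambda I + a\A)^{-1}} \leq 1$ for all $\re(\lambda)>0$ immediately. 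So the real work is confined to the upper-left $2\times 2$ block involving $V(\lambda)^{-1} = (\lambda^2 I + b\lambda\A + c^2\A)^{-1}$.

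For that block, I would bound each of the four operator entries after multiplication by $\lambda$, measuring the appropriate norms dictated by $H_1 = \D(\A^{1/2})\times\h\times\h$. Concretely, the entry mapping into the first coordinate must be estimated in the $\D(\A^{1/2})$-norm (i.e. after applying $\A^{1/2}$), while entries mapping into the second coordinate are estimated in $\h$. Thus I need uniform bounds on $\norm{\lambda\A^{1/2}V(\lambda)^{-1}(\lambda I + b\A)}$, $\norm{\lambda\A^{1/2}V(\lambda)^{-1}}$ (times the $\h\to\D(\A^{1/2})$ adjustment on the input, which also contributes $\A^{1/2}$ factors), $\norm{\lambda c^2\A V(\lambda)^{-1}\A^{-1/2}}$-type terms, and $\norm{\lambda^2 V(\lambda)^{-1}}$. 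All of these reduce, via commutativity of $\A$ and $V(\lambda)$ and the spectral theorem, to scalar estimates: writing $\mu \in \sigma(\A) \subset [\underline\mu,\infty)$, each reduces to bounding an explicit rational function of $\lambda$ and $\mu$ of the form $\lambda^j \mu^k / |\lambda^2 + b\lambda\mu + c^2\mu|$ uniformly over $\re(\lambda)>0$, $\mu \geq \underline\mu$. Lemma \ref{lem:triggiani} already supplies exactly the two workhorse bounds $\norm{\lambda^2 V(\lambda)^{-1}} \leq C_{\underline\mu,b,c}$ and $\norm{b\lambda\A V(\lambda)^{-1}} \leq 1$; the remaining mixed terms ($\norm{\lambda\A^{1/2}V(\lambda)^{-1}}$, etc.) should follow by interpolating between these two, or by a direct scalar argument of the same flavor as \cite[Proposition 3.1]{ChenTriggiani:1989}.

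I expect the main obstacle to be the bookkeeping of which powers of $\A^{1/2}$ attach to which resolvent entry once the $H_0 \leftrightarrow H_1$ equivalence (via $\hat A_1$) and the $H_1$-inner product are taken into account — in particular verifying that every entry of $\lambda R(\lambda,\tilde A_1)$, as an operator between the correctly-normed spaces, is genuinely bounded and not merely bounded on a dense subspace. The mixed term $\lambda\A^{1/2}V(\lambda)^{-1}$ is the one not literally in Lemma \ref{lem:triggiani}; for it one writes $\lambda\A^{1/2}V(\lambda)^{-1} = (\lambda^2 V(\lambda)^{-1})^{1/2}\,(\A V(\lambda)^{-1})^{1/2}\,(\text{bounded factors})$ heuristically, or more honestly passes to the scalar function $\lambda\sqrt{\mu}/|\lambda^2 + b\lambda\mu + c^2\mu|$ and checks it is bounded by splitting into the regimes $|\lambda| \lesssim \sqrt\mu$ and $|\lambda| \gtrsim \sqrt\mu$. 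Once all entries of $\lambda R(\lambda,\tilde A_1)$ are shown uniformly bounded on $\{\re\lambda>0\}$, analyticity of the semigroup generated by $\tilde A_1$ on $H_1$ follows from the resolvent criterion, completing the proof.
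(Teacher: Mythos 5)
Your plan is essentially the paper's proof: since the contraction semigroup is already in hand, the paper likewise verifies $\norm{\lam R(\lam,\tilde A_1)}_{L(H_1)}\leq M$ on $\re(\lam)>0$ by estimating the explicit resolvent \eqref{tildeAresolvent} entrywise in the $H_1$-norm, with the two bounds of Lemma \ref{lem:triggiani} doing all the work and the third block handled trivially. The only (minor) difference is in the mixed terms such as $\lam V(\lam)^{-1}\A^{1/2}x_2$: where you suggest interpolation or a scalar regime-splitting, the paper simply inserts extra powers of $\A^{1/2}$ via the strict positivity of $\A$ (the embedding \eqref{Poincare}) so that everything reduces to $\norm{b\lam\A V(\lam)^{-1}}\leq 1$ — the same idea, executed with a Poincar\'e constant instead of a case distinction.
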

\begin{proof}
As we already know that $\tilde{A}_1$ generates a strongly continuous semigroup of contractions on $H_1$,
it remains to show that there exists a constant $M>0$ such that
\begin{equation}\label{resolventest}
\norm{R(\lam,\tilde{A}_1)}_{L(H_1)}\leq \frac{M}{|\lam|} \qquad \text{for all } \lam \in \C \text{ with } \re(\lam)>0.
\end{equation}
Recall the explicit representation of the resolvent \eqref{tildeAresolvent}. Let $\lam \in \C$ with $\re(\lam) >0$ and
$x=(x_1,x_2,x_3)^T \in H_1$. By Lemma \ref{lem:triggiani} and \eqref{Poincare} we have that

\begin{align*}
\norm{\lam R(\lam,\tilde{A}_1)x}_{H_1}^2  = &~\hnorm{\lam^2V(\lam)^{-1} \A^{1/2}x_1+b\lam\A V(\lam)^{-1} \A^{1/2}x_1+\lam V(\lam)^{-1}\A^{1/2}x_2}^2\\
&~\hnorm{-\lam c^2V(\lam)\A x_1 + \lam^2 V(\lam)^{-1}x_2}^2 + \hnorm{\lam(\lam I + a\A)^{-1}x_3}^2\\
\leq &~3 \hnorm{\lam^2 V(\lam)^{-1}\A^{1/2}x_1}^2+3\hnorm{b\lam \A V(\lam)^{-1}\A^{1/2}x_1}^2\\
&~+3\hnorm{\lam V(\lam)^{-1}\A^{1/2}x_2}^2+2\hnorm{\lam c^2 \A^{1/2}V(\lam)^{-1}\A^{1/2}x_1}^2\\
&+2\hnorm{\lam^2V(\lam)^{-1}x_2}^2 + \hnorm{\lam(\lam I +a\A)x_3}^2\\
\leq &~3 \hnorm{\lam^2 V(\lam)^{-1}\A^{1/2}x_1}^2 + 3 \hnorm{b\lam \A V(\lam)^{-1}\A^{1/2}x_1}^2 \\
&~+3C_{\D(\A)\hookrightarrow \h}^2 b^{-2} \hnorm{b\lam \A V(\lam)^{-1} x_2}^2\\
&~+2C_{\D(\A^{1/2})\hookrightarrow\h}^2 c^4 b^{-2} \hnorm{b \lam \A V(\lam)^{-1}\A^{1/2}x_1}^2\\
&~+2\hnorm{\lam^2V(\lam)^{-1}x_2}^2+ \hnorm{\lam(\lam I +a\A)x_3}^2\\
\leq&~M^2 \norm{x}_{H_1}^2
\end{align*}
where $M:=\max\left(3C_{\underline{\mu},b,c}+\frac{2c^4}{b^2} C_{\D(\A^{1/2})\hookrightarrow \h}^2 +3,~\frac{3}{b^2} C_{\D(\A)\hookrightarrow \h}^2+2C_{\underline{\mu},b,c},1\right)^{1/2}$.
\end{proof}
\begin{remark} \label{rem:b} Note that the assumption $b>0$ is essential to establish the uniform bound \eqref{resolventest}. In case $b=0$
the strongly continuous semigroup of contractions generated by $\tilde{A}_1$ on $H_1$ is not analytic (see also Remark \ref{rem:spectrum}).
\end{remark}
\begin{theorem}
The operator $A: \D(A) \rightarrow H_1$ given by \eqref{generator1} generates an analytic semigroup T(t) on $H_1$.
\end{theorem}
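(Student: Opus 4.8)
The plan is to obtain the assertion as an instance of the classical fact that generators of analytic semigroups are stable under bounded additive perturbations. Recall the decomposition $A=\tilde{A}_1+B_1$ from \eqref{decomposition:triggiani}, with
\[
B_1= \begin{pmatrix} 0 & 0 & 0 \\ 0 & 0 & I \\ 0 & 0 & 0 \end{pmatrix},
\]
and note that the previous proposition already tells us that $\tilde{A}_1$ generates an analytic semigroup on $H_1$. So the entire argument reduces to checking that $B_1$ is a bounded operator on $H_1$ and then quoting the perturbation theorem.

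The boundedness of $B_1$ on $H_1$ is immediate and is the only computation needed. For $x=(x_1,x_2,x_3)^T \in H_1=\D(\A^{1/2}) \times \h \times \h$ we have $B_1 x=(0,x_3,0)^T$, hence $\norm{B_1 x}_{H_1}^2=\hnorm{x_3}^2\leq \norm{x}_{H_1}^2$, so that $B_1 \in L(H_1)$ with $\norm{B_1}_{L(H_1)}\leq 1$. In particular $\D(A)=\D(\tilde{A}_1)=\DA \times \DA \times \DA$, in agreement with \eqref{generator1}.

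It then follows from the standard bounded-perturbation result for analytic semigroups (see Appendix A, and \cite{Pazy:1983}, \cite{EngelNagel:1999}) that $A=\tilde{A}_1+B_1$ generates an analytic semigroup $T(t)$ on $H_1$, which is the claim. There is essentially no obstacle: the only subtlety is that the identity block in $B_1$ maps the third component (which lies in $\h$) into the second slot (also measured in the $\h$-norm), so no loss of regularity occurs and genuine boundedness — not merely relative boundedness — holds. Had the coupling instead fed into the first component, which carries the stronger $\D(\A^{1/2})$-norm, one would be forced into a relative-boundedness (Desch--Schappacher type) argument; here that situation does not arise.
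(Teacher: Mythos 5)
Your proof is correct and follows exactly the paper's own route: decompose $A=\tilde{A}_1+B_1$, observe that $B_1$ is bounded on $H_1$, and invoke the bounded-perturbation theorem for analytic semigroups. The only difference is that you spell out the (one-line) boundedness computation, which the paper merely asserts.
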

\begin{proof}
Note that $B_1$ from \eqref{decomposition:triggiani} is a bounded operator on $H_1$. Therefore, the result follows at once from
the perturbation theorem for analytic semigroups.
\end{proof}

\subsubsection{Generation on $H_2$}
Next, we decompose $A$, $A=\tilde{A} + B$, where
\begin{equation}\label{decomposition}
\tilde{A}=\begin{pmatrix} 0 & 0 & 0 \\ 0 & -b\mathcal{A} & 0 \\ 0 & 0 & -a\mathcal{A} \end{pmatrix}
\qquad \text{and} \qquad
B= \begin{pmatrix} 0 & I & 0 \\ -c^2\mathcal{A} & 0 & I \\ 0 & 0 & 0 \end{pmatrix}.
\end{equation}
\begin{lemma}
The linear operator $\tilde{A}$ is the generator of a bounded analytic semigroup on $H_2$.
\end{lemma}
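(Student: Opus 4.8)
The plan is to read off the conclusion from the block-diagonal structure of $\tilde{A}$. Writing $\tilde{A}=\operatorname{diag}(0,\,-b\A,\,-a\A)$ with $\D(\tilde{A})=\D(A)=\DA\times\DA\times\DA$, it suffices to verify that each of the three scalar blocks generates a bounded analytic semigroup on the corresponding factor of $H_2$ and then to invoke the elementary fact that a finite direct sum of bounded analytic semigroups, taken with the product of the domains as the domain of the generator, is again a bounded analytic semigroup on the product space; indeed the resolvent of the direct sum is block-diagonal, so the sectoriality estimate for the sum is just the maximum of the three individual ones, and the $H_2$-norm is a weighted $\ell^2$-type product norm, equivalent to the unweighted one, which makes this assembly step immediate.

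For the second and third factors I would argue as follows. Both factor spaces are $\h$, on which $\A$ is self-adjoint and strictly positive with $\sigma(\A)\subset[\underline{\mu},\infty)$, $\underline{\mu}>0$. By the spectral theorem the resolvent bound $\|\lam(\lam I+\A)^{-1}\|_{L(\h)}\le 1/\cos\theta$ holds on every sector $\{\lam\in\C:|\arg\lam|<\tfrac{\pi}{2}+\theta\}$ with $\theta\in(0,\tfrac{\pi}{2})$, so $-\A$, and hence after a harmless rescaling $-b\A$ and $-a\A$, generates a bounded analytic semigroup on $\h$ (in fact one that is contractive, indeed exponentially stable, on the right half-plane); this is the standard characterisation of bounded analytic semigroups recalled in Appendix~A, cf.\ \cite{EngelNagel:1999}, \cite{Pazy:1983}. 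The first factor is $\DA$ equipped with the norm $\tfrac{\alpha b}{2}\hnorm{\A\,\cdot\,}$; since $\A$ is boundedly invertible, $x_1\mapsto\tfrac{\alpha b}{2}\A x_1$ is an isometric isomorphism of this space onto $\h$, so it is a Banach space, and on it the zero block generates the constant $C_0$-semigroup $S_0(t)\equiv I$. That semigroup is uniformly bounded and extends to the entire function $t\mapsto I$, hence is trivially analytic (equivalently, its resolvent $\lam^{-1}I$ satisfies $\|\lam\cdot\lam^{-1}I\|=1$ on all of $\C\setminus\{0\}$).

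Combining the three blocks then gives that $\tilde{A}$ generates the bounded analytic semigroup $e^{t\tilde{A}}=\operatorname{diag}(I,\,e^{-tb\A},\,e^{-ta\A})$ on $H_2$. I do not anticipate any real obstacle in this argument; the only point needing a little care is the bookkeeping of the weighted norm on the first component of $H_2$, but since that weight merely rescales an isometric copy of $\h$ it leaves every resolvent estimate unchanged, so the diagonalisation argument goes through verbatim.
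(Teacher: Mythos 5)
Your argument is correct and is essentially the paper's proof unpacked: the paper simply observes that $\tilde{A}$ is self-adjoint on $H_2$ with $\sigma(\tilde{A})\subset\R_0^-$ and cites the normal-operator criterion (Lemma \ref{lem:normalop}), whereas you verify the same sectorial resolvent estimates block by block via the spectral theorem and reassemble them through the block-diagonal structure. Both routes rest on the identical spectral facts, so no further comment is needed.
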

\begin{proof}
Note that $\tilde{A}$ is self-adjoint on $H_2$ and its spectrum is given by
$\sigma(\tilde{A})=\{0,-a\mu_i,-b\mu_i: \mu_i \in \sigma(\mathcal{A})\} \subset \R_0^-:=\{x\in \R: x\leq 0\}$.
The result follows at once by using Lemma \ref{lem:normalop}. 
\end{proof}

\begin{lemma} We have
\begin{equation}\label{relboundest}
\|Bx\|_{H_2} \leq \frac{\alpha}{2} \|\tilde{A}x\|_{H_2} + \sqrt{2}\max\left\{\frac{2c^2}{\alpha b},1\right\} \|x\|_{H_2}   \qquad \text{for all } x \in \mathcal{D}(A).
\end{equation}
In particular, $B$ is $\tilde{A}$-bounded with $\tilde{A}$-bound $\alpha_0 = \frac{\alpha}{2} <\alpha$.
\end{lemma}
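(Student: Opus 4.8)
The plan is a direct computation using the block structure of $B$ and $\tilde A$ from \eqref{decomposition} together with the weighted inner product \eqref{sprh1}. First I would record the three quantities occurring in \eqref{relboundest}. For $x=(x_1,x_2,x_3)^T\in\D(A)$ the definition of $\|\cdot\|_{H_2}$ gives
\[
\|x\|_{H_2}^2=\tfrac{\alpha^2b^2}{4}\hnorm{\A x_1}^2+\hnorm{x_2}^2+\hnorm{x_3}^2,
\]
while $\tilde A x=(0,-b\A x_2,-a\A x_3)^T$ and $Bx=(x_2,-c^2\A x_1+x_3,0)^T$ (which lie in $H_2$ because $x_1,x_2,x_3\in\DA$) satisfy
\[
\|\tilde A x\|_{H_2}^2=\hnorm{b\A x_2}^2+\hnorm{a\A x_3}^2,\qquad
\|Bx\|_{H_2}^2=\tfrac{\alpha^2b^2}{4}\hnorm{\A x_2}^2+\hnorm{c^2\A x_1-x_3}^2.
\]

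The key observation is that $\|Bx\|_{H_2}^2$ splits into exactly two pieces, one absorbed by $\|\tilde A x\|_{H_2}$ and one by $\|x\|_{H_2}$. For the first, $\tfrac{\alpha^2b^2}{4}\hnorm{\A x_2}^2=\tfrac{\alpha^2}{4}\hnorm{b\A x_2}^2\le\tfrac{\alpha^2}{4}\|\tilde A x\|_{H_2}^2$. For the second I would apply $(p+q)^2\le2(p^2+q^2)$ to get $\hnorm{c^2\A x_1-x_3}^2\le2c^4\hnorm{\A x_1}^2+2\hnorm{x_3}^2$, then write $c^4\hnorm{\A x_1}^2=\bigl(\tfrac{2c^2}{\alpha b}\bigr)^2\tfrac{\alpha^2b^2}{4}\hnorm{\A x_1}^2$ and pull the factor $\max\{(2c^2/(\alpha b))^2,1\}=\max\{2c^2/(\alpha b),1\}^2$ out of $\tfrac{\alpha^2b^2}{4}\hnorm{\A x_1}^2+\hnorm{x_3}^2\le\|x\|_{H_2}^2$, arriving at
\[
\|Bx\|_{H_2}^2\le\tfrac{\alpha^2}{4}\|\tilde A x\|_{H_2}^2+2\max\Bigl\{\tfrac{2c^2}{\alpha b},1\Bigr\}^2\|x\|_{H_2}^2 .
\]
Taking square roots via $\sqrt{p^2+q^2}\le p+q$ then yields \eqref{relboundest}. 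The ``in particular'' assertion follows at once, since \eqref{relboundest} exhibits $\tfrac{\alpha}{2}$ as an admissible $\tilde A$-bound, so the $\tilde A$-bound $\alpha_0$ of $B$ satisfies $\alpha_0\le\tfrac{\alpha}{2}<\alpha$.

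There is no genuine obstacle here --- the computation is essentially bookkeeping --- but some care is needed to reach the sharp constant $\sqrt2\max\{2c^2/(\alpha b),1\}$ rather than the cruder $1+2c^2/(\alpha b)$ that a premature use of the triangle inequality on norms would produce: one keeps everything squared until the last line, invokes $(p+q)^2\le2(p^2+q^2)$ at the level of squared norms, and factors the maximum out of a single weighted sum so the two constants collapse into one. Finally, it is exactly the weight $\tfrac{\alpha^2b^2}{4}$ in \eqref{sprh1} that makes the off-diagonal entry $I$ in position $(1,2)$ of $B$ controllable by $\|\tilde A x\|_{H_2}$, and the freedom in the constant $\alpha>0$ will later be used to make $\alpha_0=\tfrac{\alpha}{2}$ as small as the perturbation theorem for analytic semigroups demands.
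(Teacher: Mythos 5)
Your proposal is correct and follows essentially the same route as the paper: compute the three weighted norms from the block structure, absorb the $(1,2)$-entry via the weight $\tfrac{\alpha^2 b^2}{4}$, apply $(p+q)^2\le 2(p^2+q^2)$ to the middle component, factor the maximum out of the weighted sum, and take square roots at the end. No discrepancies to report.
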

\begin{proof}
Note that $\|\tilde{A}x\|_{H_2}^2 = b^2 \|\mathcal{A}x_2\|_\mathcal{H}^2 + a^2 \|\mathcal{A}x_3\|_\mathcal{H}^2$.
Moreover,
\begin{align}\nonumber
\|Bx\|_{H_2}^2 &= \left(\frac{\alpha b}{2}\right)^2\|\mathcal{A}x_2\|_\mathcal{H}^2 + \|-c^2\mathcal{A}x_1+x_3\|_\mathcal{H}^2 \\ \nonumber
& \leq \left(\frac{\alpha b}{2}\right)^2 \|\mathcal{A}x_2\|_\mathcal{H}^2+ \left(c^2 \|\mathcal{A}x_1\|_\mathcal{H} + \|x_3\|_\mathcal{H}\right)^2 \nonumber\\
& \leq \left(\frac{\alpha b}{2}\right)^2\|\mathcal{A}x_2\|_\mathcal{H}^2 +2c^4\|\mathcal{A}x_1\|_\mathcal{H}^2 +2 \|x_3\|_\mathcal{H}^2\nonumber \\
& \leq \left(\frac{\alpha}{2}\right)^2 \|\tilde{A}x\|_{H_2}^2 + 2\max\left\{\frac{4c^4}{\alpha^2 b^2},1\right\} \|x\|_{H_2}^2\nonumber\\
& \leq \left( \frac{\alpha}{2} \|\tilde{A}x\|_{H_2} + \sqrt{2}\max\left\{\frac{2c^2}{\alpha b},1\right\} \|x\|_{H_2}\right)^2\nonumber
\end{align}
and therefore \eqref{relboundest} as stated.
\end{proof}\noindent
\begin{theorem}
The operator $A: \mathcal{D}(A) \rightarrow H_2$ given by \eqref{generator1} generates an analytic semigroup $S(t)$ on $H_2$.
\end{theorem}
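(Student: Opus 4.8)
The plan is to derive the statement from the perturbation theorem for analytic semigroups, applied to the splitting $A = \tilde{A} + B$ of \eqref{decomposition}, in the same spirit as the $H_1$ argument given above but now with an \emph{unbounded} perturbation $B$. The two preceding lemmas supply exactly the two hypotheses one needs: $\tilde{A}$ generates a bounded analytic semigroup on $H_2$, and by \eqref{relboundest} the operator $B$ is $\tilde{A}$-bounded on $\D(\tilde{A}) = \D(A)$ with $\tilde{A}$-bound equal to $\alpha/2$, where $\alpha > 0$ is precisely the constant that was left undetermined in the inner product \eqref{sprh1}.

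Concretely, I would invoke the following version of the perturbation theorem (cf.\ \cite[Chapter~3]{Pazy:1983}): if $\tilde{A}$ generates an analytic semigroup and $B$ satisfies $\D(B) \supseteq \D(\tilde{A})$ together with an estimate $\|Bx\|_{H_2} \le a\|\tilde{A}x\|_{H_2} + c\|x\|_{H_2}$ for all $x\in\D(\tilde{A})$, then there is a threshold $\delta > 0$ such that $\tilde{A}+B$ with domain $\D(\tilde{A})$ again generates an analytic semigroup whenever $a \le \delta$; moreover $\delta$ depends only on the analyticity sector of $\tilde{A}$ and the associated resolvent bound, not on the lower-order constant $c$. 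The point that makes this harmless here --- and the step I regard as the only real subtlety --- is that $\delta$ may be taken \emph{independent of} $\alpha$. Indeed $\tilde{A} = \mathrm{diag}(0,-b\A,-a\A)$ is self-adjoint and negative semidefinite on $H_2$, and since its first block is the zero operator the weight $\alpha^2 b^2/4$ occurring in \eqref{sprh1} never enters $\|\tilde{A}x\|_{H_2}$; hence $\sigma(\tilde{A})\subset\R_0^-$ and $\|R(\lam,\tilde{A})\|_{L(H_2)} \le \mathrm{dist}(\lam,\R_0^-)^{-1}$ for $\lam\notin\R_0^-$, a bound that is completely independent of $\alpha$ (and of $\A$). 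Consequently the semigroup generated by $\tilde{A}$ is analytic on a sector whose opening angle and associated resolvent constants do not depend on $\alpha$, so the admissible $\delta$ furnished by the perturbation theorem can be fixed once and for all.

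It then only remains to fix the free parameter: we \emph{choose} $\alpha\in(0,2\delta)$ in \eqref{sprh1}, so that the $\tilde{A}$-bound $\alpha/2$ of $B$ is strictly below $\delta$, and the perturbation theorem yields directly that $A = \tilde{A} + B$ with $\D(A)=\DA\times\DA\times\DA$ generates an analytic semigroup $S(t)$ on $H_2$. Beyond that, essentially no further work is needed: the inequality $\alpha/2 < 1$ already guarantees that $\tilde{A} + B$ is closed on $\D(\tilde{A})$, so one never has to verify closedness of $B$ separately, and the lower-order constant $\sqrt{2}\max\{2c^2/(\alpha b),1\}$ in \eqref{relboundest}, although it grows as $\alpha\to0$, is irrelevant for the applicability of the theorem.
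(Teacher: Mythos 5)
Your proof is correct and follows essentially the same route as the paper, which simply invokes the perturbation theorem (Proposition \ref{prop:perturb}) together with the two preceding lemmas and the freedom to choose $\alpha$ small. In fact you make explicit the one point the paper leaves implicit --- that the admissible threshold furnished by the perturbation theorem is independent of $\alpha$, because the weight $\alpha^2 b^2/4$ never enters $\|\tilde{A}x\|_{H_2}$ and the resolvent bound of the self-adjoint, negative semidefinite $\tilde{A}$ is $\mathrm{dist}(\lam,\R_0^-)^{-1}$ --- which is exactly what rules out circularity in the choice of $\alpha$.
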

\begin{proof}
We have to choose $\alpha$ in $\spr{.}{..}_{H_2}$ small enough such that the sum $\tilde{A}_1+B_1$ generates an
analytic semigroup (according to Proposition \ref{prop:perturb}).
\end{proof}

\subsubsection{Exponential decay for the homogeneous equation}
The previous results enable us now to show exponential decay of solutions in the homogeneous case.
\begin{notation}We introduce the energy functionals
\begin{align}
E_1[\psi](t) &:= \|\A^{1/2}\psi(t)\|_\h^2 + \|\psi_t(t)\|_\h^2 + \|\psi_{tt}(t)+c^2\A\psi(t) +b\A\psi_t(t)\|_\h^2,\\
E_2[\psi](t) &:= \|\A\psi(t)\|_\h^2 + \|\psi_t(t)\|_\h^2 + \|\psi_{tt}(t)+c^2\A\psi(t) +b\A\psi_t(t)\|_\h^2.
\end{align}
\end{notation}
Our aim is to show exponential decay for the energies $E_1[\psi]$ and $E_2[\psi]$.
\begin{lemma}\label{lem:omega}
The spectral bound $s(A)=\sup \{\re(\lam):\lam \in \sigma(A)\}$ of $A$ is given by
\begin{equation*}
s(A)=-\min \left\{a\underline{\mu}, \frac{b\underline{\mu}}{2},\frac{c^2}{b}\right\}.
\end{equation*}
\end{lemma}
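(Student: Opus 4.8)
The plan is to compute the spectrum $\sigma(A)$ explicitly by exploiting the block-triangular structure of the matrix operator $A$ in \eqref{generator1}, together with the spectral decomposition of the self-adjoint operator $\A$. First I would observe that since $\A$ is self-adjoint and strictly positive, $\h$ decomposes (in the case of discrete spectrum, which is the motivating application, or more generally via the spectral theorem) according to the spectral measure of $\A$; on each spectral piece corresponding to $\mu \in \sigma(\A)$, the operator $A$ reduces to the $3 \times 3$ matrix
\begin{equation*}
A_\mu = \begin{pmatrix} 0 & 1 & 0 \\ -c^2\mu & -b\mu & 1 \\ 0 & 0 & -a\mu \end{pmatrix}.
\end{equation*}
Because this matrix is block upper-triangular (the $(3,1)$ and $(3,2)$ entries vanish), its eigenvalues are $-a\mu$ together with the two roots of the quadratic characteristic polynomial $\lambda^2 + b\mu\lambda + c^2\mu = 0$ of the upper-left $2\times 2$ block. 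Thus $\lambda \in \sigma(A)$ iff, for some $\mu \in \sigma(\A)$, either $\lambda = -a\mu$ or $\lambda^2 + b\mu\lambda + c^2\mu = 0$. I would make this rigorous by checking that $\lambda I - A$ is boundedly invertible precisely when $\inf_{\mu \in \sigma(\A)}$ of the relevant $3\times 3$ determinants stays bounded away from $0$, i.e. by writing down the resolvent blockwise (as was already done for $\tilde A_1$ in \eqref{tildeAresolvent}) and reading off where it blows up.

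Next I would analyze the real parts. The contribution $-a\mu$ has real part $-a\mu \le -a\underline{\mu}$, contributing $-a\underline{\mu}$ to the spectral bound. For the quadratic roots $\lambda_\pm(\mu) = \tfrac12\bigl(-b\mu \pm \sqrt{b^2\mu^2 - 4c^2\mu}\,\bigr)$, I distinguish two regimes. When $b^2\mu \ge 4c^2$ the roots are real and negative, and the larger one is $\lambda_+(\mu) = \tfrac12(-b\mu + \sqrt{b^2\mu^2-4c^2\mu})$; rationalizing, $\lambda_+(\mu) = -2c^2\mu/(b\mu + \sqrt{b^2\mu^2-4c^2\mu}) = -2c^2/(b + \sqrt{b^2 - 4c^2/\mu})$, which is increasing in $\mu$ and has supremum $-2c^2/(2b) = -c^2/b$ as $\mu \to \infty$, while at the threshold $\mu = 4c^2/b^2$ it equals $-b\mu/2 = -2c^2/b$. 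When $b^2\mu < 4c^2$ the roots are complex conjugates with real part exactly $-b\mu/2 \ge -b\underline{\mu}/2$ on this range (this regime is nonempty only if $b^2\underline{\mu} < 4c^2$). Combining: over the real-root regime the relevant supremum of real parts approaches $-c^2/b$; over the complex-root regime it is $-b\underline{\mu}/2$; and the triangular entry gives $-a\underline{\mu}$. Taking the overall supremum yields $s(A) = -\min\{a\underline{\mu},\ b\underline{\mu}/2,\ c^2/b\}$, exactly as claimed — I should double-check the boundary cases (e.g. whether $b^2\underline{\mu} \ge 4c^2$, in which case the complex regime is empty and one uses continuity of $\lambda_+$ down to $\mu = \underline{\mu}$, giving $\lambda_+(\underline{\mu})$, and verify this still matches the stated formula).

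The main obstacle I anticipate is twofold. First, the careful justification that $\sigma(A)$ is the \emph{closure} of $\bigcup_{\mu \in \sigma(\A)} \sigma(A_\mu)$ rather than just containing the pointwise eigenvalues: one must control the resolvent uniformly in $\mu$, and in particular argue that the supremum defining $s(A)$ is achieved in the limit (it is a supremum, not necessarily a maximum — the value $-c^2/b$ is approached as $\mu \to \infty$ and need not be an eigenvalue). This is why the statement gives the spectral \emph{bound} and not a max of attained eigenvalues. Second, bookkeeping the case distinction on whether $b^2\underline{\mu}$ lies above or below $4c^2$ cleanly, so that the three candidate values $a\underline{\mu}$, $b\underline{\mu}/2$, $c^2/b$ always combine into the single clean formula regardless of parameter regime; a short monotonicity argument for $\mu \mapsto \lambda_+(\mu)$ on the real-root branch, plus the identity that the complex branch has constant real part $-b\mu/2$, handles this, but it needs to be written out with care.
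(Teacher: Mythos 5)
Your proposal is correct and follows essentially the same route as the paper: the spectrum is identified fiberwise as $\{-a\mu\}$ together with the roots $\tfrac12\bigl(-b\mu\pm\sqrt{b^2\mu^2-4c^2\mu}\bigr)$, and the real parts are analyzed by the same case distinction on $b^2\mu$ versus $4c^2$, with the real branch increasing to the limit $-c^2/b$. The only difference is that you flag (rightly) the need to justify that $\sigma(A)$ is the closure of the union of the fiber spectra, a point the paper's proof takes for granted.
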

\begin{proof}The spectrum of $A$ is given by $\sigma(A)=\{\kappa_1(\mu_i), \kappa_2(\mu_i), \kappa_3(\mu_i): \mu_i \in \sigma(\mathcal{A})\}$
where
\begin{equation*}
\kappa_1(\mu_i)=-a\mu_i \quad \text{and} \quad \kappa_{2,3}(\mu_i) = \frac{1}{2}\left(-b\mu_i\pm\left(b^2 \mu_i^2-4c^2\mu_i\right)^{1/2}\right).
\end{equation*}
First, $\kappa_1(\mu_i) \in \R$ is bounded from above by $\kappa_1(\underline{\mu})=-a\underline{\mu}$. 

If $\mu_i < \frac{4c^2}{b^2}$, then $\kappa_{2,3}(\mu_i) \in \C$ with real part $\re(\kappa_{2,3}(\mu_i))=-\frac{b\mu_i}{2}$ which is bounded from above by $\re(\kappa_{2,3}(\underline{\mu}))=-\frac{b\underline{\mu}}{2}<0$.

If $\mu_i \geq \frac{4c^2}{b^2}$, then $\kappa_{2,3} \in \R$.
We have that $\kappa_2(\mu_i)$ is increasing for $\mu_i \geq \frac{4c^2}{b^2}$ and $\lim_{\mu_i \rightarrow \infty} \kappa_2(\mu_i)=-\frac{c^2}{b}$.
Furthermore, $\kappa_3(\mu_i)$ is decreasing for $\mu_i \geq \frac{4c^2}{b^2}$ and therefore
$\{\kappa_3(\mu_i) \cap \R: \mu_i \in \sigma(\mathcal{A})\}$ is bounded from above
by $\kappa_3(\frac{4c^2}{b^2})=-\frac{2c^2}{b}$.

Combining the upper bounds for $\re(\kappa_n(\mu_i))$, $n\in\{1,2,3\}$ leads to the desired spectral bound.
\end{proof}
\begin{remark} \label{rem:spectrum} Note that for Lemma \ref{lem:omega} the assumption $b>0$ is essential. If $b=0$, the spectrum of $A$ is
given by $\sigma(A)=\{-a\mu_i, \pm \I c\mu_i: \mu_i \in \sigma(\A) \}$. Hence, in this case, $A$ is not a sectorial operator and can thus not be
the generator of an analytic semigroup (cf. Theorem \ref{thm:equivanalytic}).
\end{remark}

\begin{theorem}\label{h2:decay}
There exist positive constants $M_1, M_2, \om_1, \om_2$ such that
\begin{equation*}
E_1[\psi](t) \leq M_1 \E^{-\om_1 t} E_1[\psi](0) \quad \text{ and } \quad
E_2[\psi](t) \leq M_2 \E^{-\om_2 t} E_2[\psi](0).
\end{equation*}
\end{theorem}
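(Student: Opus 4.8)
The plan is to derive the exponential decay of $E_1[\psi]$ and $E_2[\psi]$ directly from the analyticity of the semigroups $T(t)$ on $H_1$ and $S(t)$ on $H_2$ established in the two generation theorems, combined with the spectral bound computed in Lemma~\ref{lem:omega}. The first observation is that, by the definition \eqref{psi:big} of $\Psi(t)$ and the choice of inner products on $H_1$ and $H_2$, one has the norm identities $\|\Psi(t)\|_{H_1}^2 = E_1[\psi](t)$ and $\|\Psi(t)\|_{H_2}^2 = \bigl(\tfrac{\alpha^2 b^2}{4}\bigr)\|\A\psi(t)\|_\h^2 + \|\psi_t(t)\|_\h^2 + \|\psi_{tt}(t)+c^2\A\psi(t)+b\A\psi_t(t)\|_\h^2$, so that $\|\Psi(t)\|_{H_2}^2$ is equivalent to $E_2[\psi](t)$ up to the fixed positive constants $\min\{\alpha^2 b^2/4,1\}$ and $\max\{\alpha^2 b^2/4,1\}$. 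Since $\psi$ solves \eqref{abshomfourthordereq} if and only if $\Psi$ solves \eqref{abstractODE}, we have $\Psi(t) = T(t)\Psi_0$ in $H_1$ and $\Psi(t) = S(t)\Psi_0$ in $H_2$, and everything reduces to a decay estimate for these semigroups.

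Next I would invoke the standard fact (recalled in Appendix~A, cf.\ the semigroup facts used in Section~3) that for an analytic semigroup the growth bound $\om_0$ coincides with the spectral bound $s(A)$: $\om_0(T) = s(A)$ and likewise $\om_0(S) = s(A)$, because the spectral mapping theorem $\sigma(T(t))\setminus\{0\} = \E^{t\sigma(A)}$ holds for analytic (indeed for eventually norm-continuous) semigroups. By Lemma~\ref{lem:omega} we know $s(A) = -\min\{a\underline\mu,\,b\underline\mu/2,\,c^2/b\} =: -\om < 0$. Hence for any $0 < \om_1 < \om$ (respectively $0 < \om_2 < \om$) there exists $M_1 \geq 1$ with $\|T(t)\|_{L(H_1)} \leq M_1 \E^{-\om_1 t}$ for all $t\geq 0$, and $M_2' \geq 1$ with $\|S(t)\|_{L(H_2)} \leq M_2' \E^{-\om_2 t}$ for all $t\geq 0$. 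Applying these to $\Psi_0$ gives $\|\Psi(t)\|_{H_1}^2 \leq M_1^2 \E^{-2\om_1 t}\|\Psi_0\|_{H_1}^2$ and $\|\Psi(t)\|_{H_2}^2 \leq (M_2')^2 \E^{-2\om_2 t}\|\Psi_0\|_{H_2}^2$; translating back through the norm identity and the $H_2$-equivalence constants yields $E_1[\psi](t) \leq M_1^2 \E^{-2\om_1 t} E_1[\psi](0)$ and $E_2[\psi](t) \leq \bigl(\max\{\alpha^2b^2/4,1\}/\min\{\alpha^2b^2/4,1\}\bigr)(M_2')^2 \E^{-2\om_2 t} E_2[\psi](0)$, which is the claim after relabelling the constants (replacing $2\om_i$ by $\om_i$ and absorbing the numerical factors into $M_1, M_2$).

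The one genuine subtlety — the step I would be most careful about — is justifying $\om_0 = s(A)$ rather than merely the trivial inequality $s(A) \leq \om_0$. For a general $C_0$-semigroup these can differ, but for an analytic semigroup they agree; the cleanest route is: analyticity gives a uniform resolvent bound $\|R(\lambda,A)\| \leq M/|\lambda - s(A)|$ on a sector strictly containing the half-plane $\{\re\lambda > s(A)\}$ (this is essentially what was proved en route to the generation theorems), and then a Gearhart--Prüss-type argument, or directly the representation $T(t) = \frac{1}{2\pi\I}\int_\Gamma \E^{\lambda t} R(\lambda,A)\,d\lambda$ over a suitable contour $\Gamma$ lying in $\{\re\lambda \leq s(A)+\delta\}$, produces the bound $\|T(t)\| \leq M_\delta \E^{(s(A)+\delta)t}$ for every $\delta>0$. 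I would verify that the contour can indeed be pushed to the line $\re\lambda = s(A)+\delta$ without picking up spectrum, which is immediate from Lemma~\ref{lem:omega}, and that the integral converges because $|R(\lambda,A)|$ decays like $1/|\lambda|$ along the rays of the sector. The same reasoning applies verbatim to $S(t)$ on $H_2$. Everything else is bookkeeping with the explicit norms.
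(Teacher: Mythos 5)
Your proposal is correct and follows essentially the same route as the paper: identify $\|\Psi(t)\|_{H_1}^2$ with $E_1[\psi](t)$ and $\|\Psi(t)\|_{H_2}^2$ with $E_2[\psi](t)$ up to the constant $\alpha^2b^2/4$, then combine the analyticity of $T(t)$ and $S(t)$ with the negative spectral bound from Lemma~\ref{lem:omega} to get uniform exponential stability. The only difference is that the paper simply cites the equivalence of $s(A)<0$ with uniform exponential stability for analytic semigroups (Theorem~\ref{thm:analyticdecay}, i.e.\ \cite[Theorem 4.4.3]{Pazy:1983}), whereas you sketch a proof of that equivalence; this is the same argument with one step expanded.
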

\begin{proof}
As $s(A)<0$, the analytic semigroups $T(t)$ and $S(t)$ generated by $A$ in \eqref{generator1} on the spaces $H_1$ respectively $H_2$ are
uniformly exponentially stable, i.e. there exist constants $M_1, \tilde{M}_2 >1$ and $\om_1, \om_2>0$ such that
$\norm{T(t)x}_{H_1} \leq M_1 \E^{-\om_1 t}\norm{x}_{H_1}$, $x\in H_1$ and $\norm{S(t)x}_{H_2} \leq \tilde{M}_2 \E^{-\om_2 t}\norm{x}_{H_2}$, $x\in H_2$.
Setting $x=\Psi(0)$ and rescaling the estimate in case of $H_2$ yields the claim.
\end{proof}

\subsubsection{Solutions of the homogeneous equation} We now consider the homogeneous initial boundary value problem
\begin{equation}\label{abbreq:hom}
\begin{cases}
(a\Delta - \partial_t)\left(\psi_{tt}-c^2\Delta\psi - b\Delta\psi_t \right)=0  & \text{ in } \Omega \times (0,T],
\\
(\psi,\psi_t,\psi_{tt})=(\psi_0, \psi_1, \psi_2) & \text{ on } \Omega \times \{t=0\},
\\
\psi =0  & \text{ on } \partial\Omega \times [0,T],
.\\
\end{cases}
\end{equation}
which we represented as \eqref{abstractODE}.
\begin{definition}[{\cite[4.4.1]{Pazy:1983}}] Suppose $X$ is a Banach space and $\mathbf{A}: \mathcal{D}(\mathbf{A}) \subset X \rightarrow X$ is a linear operator.
We call an $X$-valued function $u(t)$ a solution of the Cauchy problem with initial data $x$,
\begin{equation}\label{cauchy:hom}
\begin{aligned}
u_t (t)&=\mathbf{A}u(t),~~~~~~~ t>0,\\
u(0)&=x,
\end{aligned}
\end{equation}
if $u(t)$ is continuous for $t\geq 0$, continuously differentiable and $u(t) \in \D(\mathbf{A})$ for $t>0$ and \eqref{cauchy:hom} is satisfied.
\end{definition}
Recall that, if $\mathbf{A}$ is the infinitesimal generator of an analytic semigroup, the initial value problem
\eqref{cauchy:hom} has a unique solution for every $x\in X$ (\cite[Corollary 4.1.5]{Pazy:1983}).

\begin{corollary} \label{cor:hom} The homogeneous initial boundary value problem \eqref{abbreq:hom}
has a unique solution
\begin{equation*}
\psi \in  C^1(0,T;\hii \cap \hio) \cap C^2(0,T;L^2(\Om))
\end{equation*}
for all $T>0$ provided $\psi_0 \in H_0^1(\Om)$, $\psi_1 \in \lii$ and $\psi_2 -b\Delta \psi_1-c^2\Delta \psi \in \lii$.

In particular, it has a unique solution of this regularity, if $\psi_0 \in \hii \cap H_0^1(\Om)$, $\psi_1 \in \hii \cap H_0^1(\Om)$ and $\psi_2 \in \lii$.
\end{corollary}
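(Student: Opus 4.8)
The plan is to recognize \eqref{abbreq:hom} as the abstract Cauchy problem \eqref{abstractODE}, \eqref{initialcondmatrix} for the operator $A$ of \eqref{generator1} and the vector $\Psi$ of \eqref{psi:big}, and then to feed this into the semigroup machinery already set up. Since we have shown that $A$ generates an analytic semigroup $T(t)$ on $H_1$ and an analytic semigroup $S(t)$ on $H_2$, the quoted existence and uniqueness statement for analytic semigroups (\cite[Corollary 4.1.5]{Pazy:1983}) yields: for every $\Psi_0$ in the respective phase space the problem $\Psi_t=A\Psi$, $\Psi(0)=\Psi_0$ has a unique solution in the sense recalled above --- $\Psi$ is continuous on $[0,T]$ with values in the phase space, continuously differentiable on $(0,T]$, and $\Psi(t)\in\D(A)$ for $t>0$; moreover, by analyticity, $\Psi(t)\in\D(A^n)$ for every $n$ and every $t>0$, with $t\mapsto\Psi(t)$ continuous into $\D(A^n)$ on $(0,T]$. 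The homogeneous Dirichlet condition is built into the operator through $\D(\A^{1/2})=\hio$ and $\DA=\hii\cap\hio$, so that solutions of \eqref{abbreq:hom} and of the first-order system correspond to one another.

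Next I would check that the hypotheses place $\Psi_0$ in the correct phase space. If $\psi_0\in\hio$, $\psi_1\in\lii$ and $\psi_2-c^2\Delta\psi_0-b\Delta\psi_1\in\lii$, then $\psi_0\in\D(\A^{1/2})$, $\psi_1\in\h$, and the third component $\psi_2+c^2\A\psi_0+b\A\psi_1=\psi_2-c^2\Delta\psi_0-b\Delta\psi_1\in\h$, so $\Psi_0\in H_1$. If, more restrictively, $\psi_0,\psi_1\in\hii\cap\hio$ and $\psi_2\in\lii$, then $\psi_0,\psi_1\in\DA$ and again the third component lies in $\h$, so $\Psi_0\in H_2$; this second case is a special case of the first, and the two semigroups produce the same $\Psi$.

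It remains to translate the regularity of $\Psi=(\Psi^{(1)},\Psi^{(2)},\Psi^{(3)})^T$ into that of $\psi$. Put $\psi:=\Psi^{(1)}$; then \eqref{psi:big} gives $\Psi^{(2)}=\psi_t$ and $\psi_{tt}=\Psi^{(3)}-c^2\A\psi-b\A\psi_t$. For $t>0$ one has $\Psi(t)\in\D(A)=\DA\times\DA\times\DA$, hence $\psi(t),\psi_t(t)\in\DA=\hii\cap\hio$ and therefore $\A\psi(t),\A\psi_t(t)\in\lii$, so that $\psi_{tt}(t)\in\lii$; using in addition the continuity of $t\mapsto\Psi(t)$ into $\D(A)$ (and $\D(A^2)$) on $(0,T]$ one obtains $\psi\in C^1(0,T;\hii\cap\hio)\cap C^2(0,T;\lii)$, the stronger hypotheses $\Psi_0\in H_2$ carrying part of this regularity up to $t=0$. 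Uniqueness is inherited from uniqueness of $\Psi$: a solution of \eqref{abbreq:hom} of the stated regularity yields, via \eqref{psi:big}, a solution of the first-order system, and the latter is unique.

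I expect the main --- though not deep --- obstacle to be exactly this last bookkeeping step: $\psi_{tt}$ is not one of the components of $\Psi$ but has to be extracted from $\Psi^{(3)}$ by subtracting $c^2\A\psi+b\A\psi_t$, so it lands in $\lii$ only once $\psi$ and $\psi_t$ are known to lie in $\DA$, which under the weaker hypotheses happens only for $t>0$ by the smoothing property of the analytic semigroup; being precise about which regularity holds on $(0,T]$ and which up to $t=0$, and matching it to the statement above, is where care is required. Beyond that, everything rests on the generation theorems already proved.
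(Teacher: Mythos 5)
Your proposal is correct and follows essentially the same route as the paper: rewrite \eqref{abbreq:hom} as the first-order system \eqref{abstractODE}--\eqref{initialcondmatrix}, invoke the analyticity of $A$ on $H_1$ and $H_2$ together with \cite[Corollary 4.1.5]{Pazy:1983}, verify that the hypotheses place $\Psi_0$ in the respective phase space, and translate back to $\psi$. In fact you are somewhat more explicit than the paper's own (very terse) proof about the bookkeeping of recovering $\psi_{tt}$ from the third component and about where the regularity holds on $(0,T]$ versus up to $t=0$.
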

\begin{proof}
Recall that, if $\mathbf{A}$ is the infinitesimal generator of an analytic semigroup, the initial value problem
\eqref{cauchy:hom} has a unique solution for every $x\in X$ (\cite[Corollary 4.1.5]{Pazy:1983}).
As $A$ given by \eqref{generator1} is analytic on $H_i$, $i\in\{1,2\}$ we conclude that the initial value problem \eqref{abstractODE} with initial conditions \eqref{initialcondmatrix}
has a unique solution for all $\Psi_0 \in H_i$, $i\in\{1,2\}$, i.e. there exists a unique function $\Psi \in H_i$, $i\in\{1,2\}$
such that $\Psi$ is continuous for $t\geq 0$, continuously differentiable, $\Psi \in \D(A)$ for $t>0$ and \eqref{abstractODE}
is satisfied. This yields the first claim.

For the second claim note that $\psi_0 \in \hii \cap H_0^1(\Om)$, $\psi_1 \in \hii \cap H_0^1(\Om)$ and $\psi_2 \in \lii$ implies $\psi_0 \in H_0^1(\Om)$, $\psi_1 \in \lii$ and $\psi_2 -b\Delta \psi_1-c^2\Delta \psi \in \lii$.
\end{proof}

\subsection{Semigroup methods for the inhomogeneous equation} In this section we consider the inhomogeneous initial boundary value problem
\begin{equation}\label{abbreq:inhom}
\begin{cases}
(a\Delta - \partial_t)\left(\psi_{tt}-c^2\Delta\psi - b\Delta\psi_t \right)=f  & \text{ in } \Omega \times (0,T],
\\
(\psi,\psi_t,\psi_{tt})=(\psi_0, \psi_1, \psi_2) & \text{ on } \Omega \times \{t=0\},
\\
\psi =0  & \text{ on } \partial\Omega \times [0,T],
\\
\end{cases}
\end{equation}
where $f: \Om \times (0,T] \rightarrow\R$ is given. We represent it as an inhomogeneous abstract ordinary differential equation of the form
\begin{equation}\label{inhomeq}
\Psi_t(t)=A\Psi(t) + F(t), \qquad t \in (0,T)
\end{equation}
with the initial conditions \eqref{initialcond}, where $A$ and $\Psi$ are given by \eqref{generator1} and \eqref{psi:big}, respectively and
$F(t)=(0,0,f(t))^T$.
\begin{definition}[{\cite[4.2.1]{Pazy:1983}}] Suppose $X$ is a Banach space and consider
the inhomogeneous initial value problem $u_t(t)=\mathbf{A}u(t)+\mathbf{F}(t)$, $t\in(0,T)$ with initial condition $u(0)=u_0$.

A function $u: [0,T) \rightarrow X$ is a (classical) solution of the inhomogeneous initial value problem on $[0,T)$ if $u$ is continuous on $[0,T)$,
continuously differentiable on $(0,T)$, $u(t) \in \D(\mathbf{A})$ for $0<t<T$ and $u_t(t)=\mathbf{A}u(t)+\mathbf{F}(t)$ is satisfied on $[0,T)$.
\end{definition}

\begin{corollary}
Let $\psi_0 \in H_0^1(\Om)$, $\psi_1 \in \lii$ and $\psi_{tt}-b\Delta\psi_t-c^2\Delta\psi \in \lii$. Furthermore, suppose $f\in L^1(0,T;\lii)$
is locally H{\"o}lder-continuous on $(0,T]$. Then the initial boundary value problem \eqref{abbreq:inhom} has a unique solution
\begin{equation*}
\psi \in  C^1(0,T;\hii\cap \hio) \cap C^2(0,T;L^2(\Om)).
\end{equation*}
In particular, it has a unique solution of this regularity, if $\psi_0 \in \hii \cap H_0^1(\Om)$, $\psi_1 \in \hii \cap H_0^1(\Om)$ and $\psi_2 \in \lii$.
\end{corollary}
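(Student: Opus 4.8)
The plan is to follow the proof of Corollary~\ref{cor:hom} almost verbatim, with the homogeneous Cauchy problem replaced by the inhomogeneous abstract equation \eqref{inhomeq} and the cited existence/uniqueness statement for analytic semigroups replaced by its inhomogeneous counterpart: if $\mathbf A$ generates an analytic semigroup on a Banach space $X$ and $\mathbf F\in L^1(0,T;X)$ is locally H\"older continuous on $(0,T]$, then for every $x\in X$ the problem $u_t=\mathbf Au+\mathbf F$, $u(0)=x$ has a unique classical solution (\cite[Corollary~4.3.3]{Pazy:1983}). First I would rewrite \eqref{abbreq:inhom} as \eqref{inhomeq} on $X=H_1$ with $A$ as in \eqref{generator1}, $F(t)=(0,0,f(t))^T$ and $\Psi_0$ as in \eqref{initialcondmatrix}, and check that the data fit: $\psi_0\in H_0^1(\Om)=\D(\A^{1/2})$ and $\psi_1\in\lii=\h$, while the third component of $\Psi_0$ equals $\psi_2+c^2\A\psi_0+b\A\psi_1=(\psi_{tt}-b\Delta\psi_t-c^2\Delta\psi)|_{t=0}\in\h$ by hypothesis, so $\Psi_0\in H_1$; moreover $\|F(t)\|_{H_1}=\|f(t)\|_\h$ and $\|F(t)-F(s)\|_{H_1}=\|f(t)-f(s)\|_\h$, so $F\in L^1(0,T;H_1)$ and $F$ is locally H\"older continuous on $(0,T]$ exactly because $f$ is. Since $A$ generates an analytic semigroup on $H_1$ (the theorem following Remark~\ref{rem:b}), \cite[Corollary~4.3.3]{Pazy:1983} yields a unique classical solution $\Psi$, continuous on $[0,T)$ into $H_1$, continuously differentiable on $(0,T)$, with $\Psi(t)\in\D(A)=\DA\times\DA\times\DA$ and $\Psi_t=A\Psi+F$ for $t\in(0,T)$.

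It then remains to translate this into regularity of $\psi$. From $\Psi(t)\in\D(A)$ for $t>0$ one reads off $\psi(t),\psi_t(t)\in\DA=\hii\cap\hio$ and, writing $w$ for the third component of $\Psi$, $\psi_{tt}(t)=w(t)-c^2\A\psi(t)-b\A\psi_t(t)\in\h=\lii$. To obtain the continuity and differentiability statements I would restart the equation: fix $t_0\in(0,T)$; since $\Psi(t_0)\in\D(A)\subset H_2$, Pazy's theorem applied on $H_2$ (using that $A$ also generates an analytic semigroup on $H_2$, and that $F$ satisfies the same hypotheses relative to $H_2$) gives a classical $H_2$-solution on $[t_0,T)$ with initial value $\Psi(t_0)$; because $H_2\hookrightarrow H_1$ continuously, this $H_2$-solution is also an $H_1$-classical solution, so by the uniqueness part above it coincides with $\Psi$ on $[t_0,T)$. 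As $t_0\in(0,T)$ is arbitrary, $\Psi\in C((0,T);H_2)\cap C^1((0,T);H_2)$. Reading off components — the first slot of $H_2$ being $\DA$, so that $\Psi\in C((0,T);H_2)$ forces $\psi\in C((0,T);\DA)$, and $\Psi_t\in C((0,T);H_2)$ forces $\psi_t\in C((0,T);\DA)$ and $\psi_{tt}\in C((0,T);\h)$ — yields $\psi\in C^1(0,T;\hii\cap\hio)\cap C^2(0,T;\lii)$, as claimed. Uniqueness of $\psi$ is inherited from uniqueness of the abstract classical solution. Finally, the ``in particular'' statement is immediate, exactly as in Corollary~\ref{cor:hom}: if $\psi_0,\psi_1\in\hii\cap\hio$ and $\psi_2\in\lii$, then $\psi_0\in H_0^1(\Om)$, $\psi_1\in\lii$, and $\psi_2-b\Delta\psi_1-c^2\Delta\psi_0\in\lii$ because $\Delta\psi_0,\Delta\psi_1\in\lii$.

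The routine parts are the verification that $F$ inherits $L^1$-integrability and local H\"older continuity from $f$ (transparent from $\|F(t)\|_{H_1}=\|f(t)\|_\h$) and the componentwise bookkeeping for $\Psi$. The step I expect to need the most care is the regularity translation: to land $\psi$ and $\psi_t$ in $\hii$ rather than merely in $\hio$ one must genuinely invoke the second phase space $H_2$ through the restart-and-uniqueness argument, since the weak data $\psi_0\in H_0^1(\Om)$ only place $\Psi_0$ in $H_1$. One should also keep in mind, as in Corollary~\ref{cor:hom}, that the spaces $C^1(0,T;\cdot)$ and $C^2(0,T;\cdot)$ here really describe the behaviour on the open interval, the regularity at $t=0$ being limited by the assumptions on the initial data.
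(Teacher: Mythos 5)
Your proposal is correct and follows essentially the same route as the paper, whose proof is the single line that the result follows analogously to Corollary \ref{cor:hom} by applying \cite[Corollary 4.3.3]{Pazy:1983}. You in fact supply more detail than the paper does (the verification that $F$ inherits the hypotheses of $f$, and the restart-and-uniqueness argument on $H_2$ to recover the $\hii\cap\hio$-regularity of $\psi$ and $\psi_t$ from the merely $H_1$-valued initial datum), all of which is consistent with what the paper implicitly relies on.
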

\begin{proof}
The result follows analogously to the one Corollary \ref{cor:hom} by applying \cite[Corollary 4.3.3]{Pazy:1983}.
\end{proof}

\section{Energy estimates}
In this section, we derive energy estimates which will enable us to prove global
existence of solutions in Section \ref{sec:nonlinear}.
Again, we consider the equation
\begin{equation*}
(\partial_t+a \A)(\psi_{tt}+c^2\A\psi+b\A\psi_t)=-\sigma (\psi_t^2)_{tt}
\end{equation*}
or, equivalently,
\begin{equation*}
(\partial_{tt} + c^2 \A + b \A\partial_t)(\psi_t+a\A\psi)=-\sigma (\psi_t^2)_{tt}
\end{equation*}
i.e.
\begin{equation*}
D_w w = f=-\sigma (\psi_t^2)_{tt} \qquad \text{ and } \qquad D_h\psi=w
\end{equation*}
where $D_h=\partial_{t} + a\A$ and $D_w=\partial_{tt} + c^2 \A + b \A\partial_t$.
\begin{remark}
In order to interchange the order of differentiation, we need to assume that the following
estimates only hold for sufficiently smooth solutions. But in fact, by using density arguments,
this restriction can finally be removed.
\end{remark}
\begin{notation}
We introduce the energy functionals
\begin{align}
\cE_0[w](t)&:=\frac{1}{2}\left( \hnorm{w_{tt}(t)}^2+\hnorm{\A^{1/2}w_t(t)}^2+\hnorm{\A w(t)}^2\right)\label{energy0:westervelt},\\
\cE[w](t)&:=\frac{1}{2}\left( \hnorm{\A^{1/2}w_{tt}(t)}^2+\hnorm{\A^{1/2}w_t(t)}^2+\hnorm{\A w(t)}^2\right),\label{energy:westervelt}\\
E[\psi](t)&:=\frac{1}{2} \left(\hnorm{\A^{1/2}\psi_{ttt}(t)}^2+\hnorm{\A\psi_{tt}(t)}^2+\hnorm{\A\psi_{t}(t)}^2\right), \label{energy:psi}
\end{align}
as well as the sum of \eqref{energy:westervelt} and \eqref{energy:psi},
\begin{equation}
\Lambda(t):= E[\psi](t) + \cE[w](t).\label{energy:sum}
\end{equation}
\end{notation}
\begin{lemma}\label{lem:Prop51} For a solution $w$ of $D_w w= f$ with $f_t \in L^2(0,T;\h)$ we have the estimate
\begin{equation}\label{ieq:Th51iii}
\begin{aligned}
&\frac{1}{2}\int_0^t\hnorm{w_{ttt}(\tau)}^2\, d\tau+
\frac{b}{2}\hnorm{\A^{1/2}w_{tt}(\tau)}^2 \Big|_0^t\\
&\leq c^4\int_0^t \hnorm{\A w_t(\tau)}^2 \, d\tau
+ \int_0^t \hnorm{f_t(\tau)}^2\, d\tau.
\end{aligned}
\end{equation}
\end{lemma}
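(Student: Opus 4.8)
The plan is to differentiate the equation $D_w w = f$ once in time, test the resulting identity against $w_{ttt}$ in $\h$, and integrate over $(0,t)$. Since $D_w = \partial_{tt} + c^2\A + b\A\partial_t$, differentiation in $t$ yields
\[
w_{ttt} + c^2 \A w_t + b\A w_{tt} = f_t,
\]
which is legitimate because $f_t \in L^2(0,T;\h)$; as flagged in the remark preceding the statement, the computation is first carried out for sufficiently smooth $w$ and then extended to the general case by a density argument.

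Next I would take the $\h$-inner product of this identity with $w_{ttt}$. The first term contributes $\hnorm{w_{ttt}}^2$. In the term $b\spr{\A w_{tt}}{w_{ttt}}_\h$ I would shift one power $\A^{1/2}$ onto the second factor using self-adjointness of $\A$, recognizing that $\spr{\A^{1/2} w_{tt}}{\A^{1/2}w_{ttt}}_\h = \tfrac12 \tfrac{d}{dt}\hnorm{\A^{1/2}w_{tt}}^2$. Rearranging then gives $\hnorm{w_{ttt}}^2 + \tfrac{b}{2}\tfrac{d}{dt}\hnorm{\A^{1/2}w_{tt}}^2 = \spr{f_t}{w_{ttt}}_\h - c^2\spr{\A w_t}{w_{ttt}}_\h$, and integrating from $0$ to $t$ produces the two terms $\int_0^t \hnorm{w_{ttt}}^2\,d\tau$ and $\tfrac{b}{2}\hnorm{\A^{1/2}w_{tt}}^2\big|_0^t$ on the left-hand side.

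It then remains to absorb the two cross terms on the right. I would estimate each by Cauchy--Schwarz followed by Young's inequality in the form $xy \leq \tfrac14 x^2 + y^2$ with $x = \hnorm{w_{ttt}}$, bounding $c^2\spr{\A w_t}{w_{ttt}}_\h$ by $\tfrac14\hnorm{w_{ttt}}^2 + c^4\hnorm{\A w_t}^2$ and $\spr{f_t}{w_{ttt}}_\h$ by $\tfrac14\hnorm{w_{ttt}}^2 + \hnorm{f_t}^2$. After integration the two $\tfrac14\hnorm{w_{ttt}}^2$ contributions combine to $\tfrac12\int_0^t\hnorm{w_{ttt}}^2\,d\tau$, which is moved to the left-hand side, leaving exactly \eqref{ieq:Th51iii}.

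There is no genuine obstacle here: the estimate is a routine higher-order energy identity. The only points requiring care are the justification of the time differentiation and of the spatial integration by parts, handled via the density argument mentioned above, and the choice of the constant in Young's inequality so that the absorbed terms sum to strictly less than $\int_0^t\hnorm{w_{ttt}}^2\,d\tau$ — this is precisely what leaves the factor $\tfrac12$ in front of the dissipation term on the left.
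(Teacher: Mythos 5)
Your proposal is correct and follows essentially the same route as the paper: differentiate $D_w w=f$ in time, test with $w_{ttt}$, integrate the $b\A w_{tt}$ term by parts to produce $\tfrac{b}{2}\hnorm{\A^{1/2}w_{tt}}^2\big|_0^t$, and absorb half of $\int_0^t\hnorm{w_{ttt}}^2$ via Young's inequality. The only cosmetic difference is that the paper groups $f_t-c^2\A w_t$ into a single term and applies Cauchy--Schwarz once before expanding with $(x+y)^2\leq 2x^2+2y^2$, whereas you split the two cross terms and use Young with weights $\tfrac14+\tfrac14$; both yield the identical constants in \eqref{ieq:Th51iii}.
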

\begin{proof}
Differentiating $D_w w =f$ with respect to time, taking
inner products in $\h$ with $w_{ttt}$ and then integrating with respect to time we get
\begin{equation*}
\begin{aligned}
&\int_0^t \spr{w_{ttt}(\tau)}{w_{ttt}(\tau)}_\h\, d\tau
+c^2\int_0^t\spr{\A w_t(\tau)}{w_{ttt}(\tau)}_\h \,d\tau \\
&+ b\int_0^t\spr{\A w_{tt}(\tau)}{w_{ttt}(\tau)}_\h \,d\tau
= \int_0^t\spr{f_t(\tau)}{w_{ttt}(\tau)}_\h\,d\tau.
\end{aligned}
\end{equation*}
Performing integration by parts, we obtain
\begin{equation}\label{ibp:proofwestervelt}
\int_0^t \hnorm{w_{ttt}(\tau)}^2\, d\tau + \frac{b}{2}\hnorm{\A^{1/2}w_{tt}(\tau)}^2 \Big|_0^t
= \int_0^t\spr{f_t(\tau)- c^2\A w_t(\tau)}{w_{ttt}(\tau)}_\h \,d\tau.
\end{equation}
Estimating the right hand side yields
\begin{align*}
&\nonumber\int_0^t\spr{f_t(\tau)-c^2\A w_t(\tau)}{w_{ttt}(\tau)}_\h \, d\tau\\
&\leq \int_0^t \hnorm{f_t(\tau)-c^2\A w_t(\tau)} \hnorm{w_{ttt}(\tau)} \, d\tau \\
&\leq \nonumber \frac{1}{2}\int_0^t\hnorm{w_{ttt}(\tau)}^2 \, d\tau
+\frac{1}{2} \int_0^t\hnorm{f_t(\tau)-c^2\A w_t(\tau)}^2 \, d\tau \\
&\leq \nonumber \frac{1}{2}\int_0^t\norm{w_{ttt}(\tau)}_\h^2\, d\tau
+c^4\int_0^t \hnorm{\A w_t(\tau)}^2 \, d\tau + \int_0^t\hnorm{f_t(\tau)}^2 \, d\tau
\end{align*}
which, together with \eqref{ibp:proofwestervelt}, implies the desired estimate \eqref{ieq:Th51iii}.
\end{proof}
\begin{lemma}The estimate
\begin{equation}\label{est1}
\begin{aligned}
&\cE[w](t)+ \check{b}\int_0^t\left\{
\hnorm{w_{ttt}(\tau)}^2+\hnorm{\A^{1/2}w_{tt}(\tau)}^2+\hnorm{\A w_t(\tau)}^2+\hnorm{\A w(\tau)}^2\right\}d\tau\\
&\leq C \left(\cE[w](0)+\int_0^t\left\{
 \hnorm{(\psi_t^2)_{ttt}(\tau)}^2+\hnorm{(\psi_t^2)_{tt}(\tau)}^2\right\}d\tau\right)
\end{aligned}
\end{equation}
holds with $\check{b}$ sufficiently small and $C$ sufficiently large.
\end{lemma}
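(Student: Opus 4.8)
The strategy is a standard energy identity for the damped Westervelt-type operator $D_w$, combined with the bound from Lemma~\ref{lem:Prop51}. Recall $D_w = \partial_{tt} + c^2\A + b\A\partial_t$ and $D_w w = f = -\sigma(\psi_t^2)_{tt}$, so $f_t = -\sigma(\psi_t^2)_{ttt}$, which is why the right-hand side of \eqref{est1} involves $\hnorm{(\psi_t^2)_{ttt}}^2$ and $\hnorm{(\psi_t^2)_{tt}}^2$. First I would take the inner product of $D_w w = f$ with $\A w_t$ in $\h$ and integrate over $(0,t)$: the term $\spr{w_{tt}}{\A w_t}_\h = \frac12 \frac{d}{dt}\hnorm{\A^{1/2}w_t}^2$, the term $c^2\spr{\A w}{\A w_t}_\h = \frac{c^2}{2}\frac{d}{dt}\hnorm{\A w}^2$, and the damping term $b\spr{\A w_t}{\A w_t}_\h = b\hnorm{\A w_t}^2$ is the crucial dissipative contribution, giving control of $\int_0^t \hnorm{\A w_t}^2\,d\tau$. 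Second, I would take the inner product with $\A w_{ttt}$ (equivalently, differentiate in time and pair with $\A^{1/2}w_{ttt}$, i.e.\ apply $\A^{1/2}$ and mimic Lemma~\ref{lem:Prop51}) to generate $\frac12\frac{d}{dt}\hnorm{\A^{1/2}w_{tt}}^2$, and use Lemma~\ref{lem:Prop51} (after applying $\A^{1/2}$, or by a parallel computation) to control $\int_0^t\hnorm{\A^{1/2}w_{ttt}}^2\,d\tau$ at the cost of $c^4\int_0^t\hnorm{\A w_{tt}}^2$ — but note we only need $\hnorm{w_{ttt}}$, not $\hnorm{\A^{1/2}w_{ttt}}$, in \eqref{est1}, so the plainer Lemma~\ref{lem:Prop51} may suffice directly.

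Third, to close the estimate I would combine the identities from the previous steps with suitable positive weights so that the left-hand side reproduces $\cE[w](t)$ plus a positive multiple $\check b$ of $\int_0^t\{\hnorm{w_{ttt}}^2 + \hnorm{\A^{1/2}w_{tt}}^2 + \hnorm{\A w_t}^2 + \hnorm{\A w}^2\}\,d\tau$. The term $\int_0^t\hnorm{\A w}^2$ is not produced directly by dissipation; it is recovered from $\int_0^t\hnorm{\A w_t}^2$ together with $\hnorm{\A w(0)}^2$ via $\hnorm{\A w(t)}^2 \le 2\hnorm{\A w(0)}^2 + 2t\int_0^t\hnorm{\A w_t}^2$ — or, cleaner, by absorbing a small multiple of $\hnorm{\A w}^2$ using Young's inequality against the $\frac{c^2}{2}\frac{d}{dt}\hnorm{\A w}^2$ term and Poincaré \eqref{Poincare}. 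The cross terms on the right (e.g.\ $c^2\int_0^t\spr{\A w}{\A w_{ttt}}$ or $c^4\int_0^t\hnorm{\A w_{tt}}^2$ coming from Lemma~\ref{lem:Prop51}) must be absorbed into the dissipative left-hand side; this is exactly where $\check b$ must be taken sufficiently small and $C$ sufficiently large, after using Young's inequality with carefully chosen parameters and \eqref{Poincare} to trade $\hnorm{\A^{1/2}w_{tt}}$ against $\hnorm{\A w_{tt}}$ where needed.

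The main obstacle is the absorption bookkeeping: the term $c^4\int_0^t\hnorm{\A w_t}^2\,d\tau$ produced on the right of Lemma~\ref{lem:Prop51}'s estimate competes with the dissipation $b\int_0^t\hnorm{\A w_t}^2\,d\tau$ gained from pairing with $\A w_t$, so the two energy identities must be combined with a weight on the second that makes the net coefficient of $\int_0^t\hnorm{\A w_t}^2$ strictly positive; this forces $b>0$ and dictates the size of $\check b$. The $f$-dependent terms on the right are handled immediately by Cauchy--Schwarz and Young: $\int_0^t\spr{f}{\A w_t}_\h \le \tfrac{\eta}{2}\int_0^t\hnorm{\A w_t}^2 + \tfrac{1}{2\eta}\int_0^t\hnorm{f}^2$ and, via Poincaré \eqref{Poincare}, $\hnorm{f}^2 = \sigma^2\hnorm{(\psi_t^2)_{tt}}^2$, while the $f_t$ contribution from Lemma~\ref{lem:Prop51} gives $\sigma^2\int_0^t\hnorm{(\psi_t^2)_{ttt}}^2$; adding the initial-data terms $\hnorm{\A^{1/2}w_t(0)}^2 + \hnorm{\A w(0)}^2 + \hnorm{\A^{1/2}w_{tt}(0)}^2$, all of which are bounded by $2\cE[w](0)$, yields \eqref{est1}.
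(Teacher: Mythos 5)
Your overall architecture (a first-order energy identity for $D_w w=f$ plus Lemma \ref{lem:Prop51} with a small weight, then absorption) is close in spirit to the paper, which instead imports the lower-order estimate wholesale from \cite[Proposition 3]{KL09} and only adds \eqref{ieq:Th51iii} multiplied by a small $\lambda$, controlling $\int_0^t\hnorm{\A w_t}^2$ via the equation itself, $\hnorm{\A w_t}^2\leq b^{-2}\bigl(3\hnorm{w_{tt}}^2+3c^4\hnorm{\A w}^2+4\hnorm{f}^2\bigr)$. But there is a genuine gap in your plan: the term $\int_0^t\hnorm{\A w(\tau)}^2\,d\tau$ on the left of \eqref{est1}. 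The multiplier $\A w_t$ produces only the dissipation $b\int_0^t\hnorm{\A w_t}^2\,d\tau$ plus the total derivatives $\tfrac12\tfrac{d}{dt}\hnorm{\A^{1/2}w_t}^2$ and $\tfrac{c^2}{2}\tfrac{d}{dt}\hnorm{\A w}^2$; integrating the latter yields only the endpoint values $\hnorm{\A w(t)}^2-\hnorm{\A w(0)}^2$, from which no time integral of $\hnorm{\A w}^2$ can be extracted, and Poincar\'e \eqref{Poincare} runs in the wrong direction here ($\hnorm{\A^{1/2}v}\leq C\hnorm{\A v}$, so $\int\hnorm{\A w_t}^2$ controls $\int\hnorm{\A^{1/2}w_t}^2$, not $\int\hnorm{\A w}^2$). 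Your fallback $\hnorm{\A w(t)}^2\leq 2\hnorm{\A w(0)}^2+2t\int_0^t\hnorm{\A w_t}^2$ introduces factors of $t$ and hence time-dependent constants, which destroys the uniformity of $\check b$ and $C$ that the subsequent barrier argument and the exponential decay result require.

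The missing ingredient is a third multiplier proportional to $\A w$ (equivalently, testing with $\A(w_t+\rho w)$ for small $\rho>0$, which is what is encoded in \cite[Proposition 3]{KL09}): from $\int_0^t\spr{D_w w}{\A w}_\h\,d\tau$ one obtains $c^2\int_0^t\hnorm{\A w}^2\,d\tau$ on the left, at the cost of $\tfrac b2\hnorm{\A w}^2\big|_0^t$ and $\int_0^t\spr{w_{tt}}{\A w}_\h\,d\tau=\spr{\A^{1/2}w_t}{\A^{1/2}w}_\h\big|_0^t-\int_0^t\hnorm{\A^{1/2}w_t}^2\,d\tau$, all of which are absorbable into the energy and the existing dissipation. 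With that identity added (with a small weight) to your two, the rest of your bookkeeping --- weighting Lemma \ref{lem:Prop51} so that $c^4\int_0^t\hnorm{\A w_t}^2$ is dominated by the $b$-dissipation, Young's inequality on the $f$-terms, and bounding the initial data by $\cE[w](0)$ --- goes through and yields \eqref{est1}.
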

\begin{proof}
We recall Proposition 3 in \cite{KL09} with $\alpha\equiv1$, i.e. for the energy $\cE_0[w](t)$ in \eqref{energy0:westervelt}
we have the estimate
\begin{equation}\label{est:prop3}
\begin{aligned}
&\cE_0[w](t)+ \hat{b}\int_0^t\left\{
\hnorm{\A^{1/2}w_{tt}(\tau)}^2+\hnorm{\A^{1/2}w_t(\tau)}^2+\hnorm{\A w(\tau)}^2\right\}d\tau\\
&\leq \tilde{C} \left(\cE_0[w](0)+\int_0^t\left\{
 \hnorm{\A^{-1/2}f_t(\tau)}^2+\hnorm{f}^2\right\}d\tau\right)
 \end{aligned}
\end{equation}
for $\hat{b}$ sufficiently small and $\tilde{C}$ sufficiently large. 
Next, we use \eqref{ieq:Th51iii}
\begin{equation}\label{est:Th51iii}
\begin{aligned}
&\frac{\lambda b}{2}\hnorm{\A^{1/2}w_{tt}(t)}^2
+\frac{\lambda}{2}\int_0^t \hnorm{w_{ttt}(\tau)}^2\,d\tau
-\lambda c^4\int_0^t \hnorm{\A w_t(\tau)}^2\,d\tau\\
&\leq \frac{\lambda b}{2}\hnorm{\A^{1/2}w_{tt}(0)}^2+
\lambda\int_0^t \hnorm{f_t(\tau)}^2\,d\tau
\end{aligned}
\end{equation}
and multiply it with
a sufficiently small constant
$\lambda$,
$
\lambda\leq\tfrac{\hat{b}b}{6c^4\max\big\{C_{\D(\A^{1/2})\hookrightarrow \h},c^4\big\}}.
$
Using the estimate (from $D_w w=f$) yields
\begin{equation}\label{est:Th51ii}
\begin{aligned}
\hnorm{\A w_t}^2=\frac{1}{b^2}\hnorm{w_{tt}+c^2\A w-f}^2\leq
\frac{1}{b^2}\left(3\hnorm{w_{tt}}^2+3c^4\hnorm{\A w}^2+4\hnorm{f}^2\right)
\end{aligned}
\end{equation}
which implies that, on the left hand side of \eqref{est:prop3}, we have
\begin{equation*}
\begin{aligned}
&\int_0^t\left\{\hnorm{\A^{1/2}w_{tt}(\tau)}^2+\hnorm{\A w(\tau)}^2\right\}\,d\tau\\
&\geq c_0 \int_0^t\left\{
\hnorm{\A^{1/2}w_{tt}(\tau)}^2+\hnorm{\A w(\tau)}^2+\hnorm{\A w_t(\tau)}^2\right\}\,d\tau
-C_0\int_0^t\hnorm{f(\tau)}^2\, d\tau
\end{aligned}
\end{equation*}
where $c_0=\left(\max\left\{1+3b^{-2}C_{\D(\A)\hookrightarrow \h}\, C_{\D(\A^{1/2})\hookrightarrow \h}, 1+3c^4b^{-2}\,C_{\D(\A)\hookrightarrow \h}\right\}\right)^{-1}$
and $C_0=3b^{-2}c_0$ and that in \eqref{est:Th51iii} we get (choice of $\lambda$)
\begin{equation*}
\lambda c^4\int_0^t \hnorm{\A w_t(\tau)}^2\,d\tau
\leq  \frac{\hat{b}}{2}
\int_0^t\left\{\hnorm{\A^{1/2}w_{tt}(\tau)}^2+\hnorm{\A w(\tau)}^2 + \hnorm{f(\tau)}^2\right\}d\tau\,.
\end{equation*}
Adding \eqref{est:prop3} and \eqref{est:Th51ii} and using \eqref{Poincare} for $\hnorm{\A^{-1/2}f}$, we obtain \eqref{est1}
with $\check{b}$ sufficiently small and $\check{C}$ sufficiently large.
\end{proof}

Now we use the following energy identity for the heat equation:
\begin{equation}\label{id:heat1}
\begin{aligned}
&\int_0^t \hnorm{D_h v(\tau)}^2\, d\tau = \int_0^t \hnorm{v_t(\tau)+a\A v}^2\, d\tau \\
&=a\hnorm{\A^{1/2}v(t)}^2-a\hnorm{\A^{1/2}v(0)}^2+\int_0^t \left\{\hnorm{v_t(\tau)}^2+a^2\hnorm{\A v(\tau)}^2\right\}d\tau.
\end{aligned}
\end{equation}
Applying \eqref{id:heat1}
to $v=\psi_{ttt}$ (i.e., $D_hv=w_{ttt}$),
to $v=\A^{1/2}\psi_{tt}$ (i.e., $D_hv=\A^{1/2} w_{tt}$),
and to $v=\A^{1/2}\psi_t$ (i.e., $D_hv=\A^{1/2} w_t$)
we obtain that the left hand side terms
under the time integrals in \eqref{est1} provide us with the estimates
\begin{align*}
\int_0^t \hnorm{w_{ttt}(\tau)}^2\, d\tau
=~&a\hnorm{\A^{1/2}\psi_{ttt}(t)}^2 -a\hnorm{\A^{1/2}\psi_{ttt}(0)}^2\\
&+\int_0^t \left\{\hnorm{\psi_{tttt}(\tau)}^2+a^2\hnorm{\A \psi_{ttt}(\tau)}^2\right\}d\tau,\\
\int_0^t \hnorm{\A^{1/2} w_{tt}(\tau)}^2\, d\tau
=~&a\hnorm{\A\psi_{tt}(t)}^2-a\hnorm{\A\psi_{tt}(0)}^2\\
&+\int_0^t \left\{\hnorm{\A^{1/2}\psi_{ttt}(\tau)}^2+a^2\hnorm{\A^{3/2}\psi_{tt}(\tau)}^2\right\}d\tau,\\
C_{\D(\A^{1/2})\hookrightarrow \h} \int_0^t \hnorm{\A w_t(\tau)}^2\, d\tau
\geq~& a\hnorm{\A\psi_{t}(t)}^2-a\hnorm{\A\psi_{t}(0)}^2\\
&+\int_0^t \left\{\hnorm{\A^{1/2}\psi_{tt}(\tau)}^2+a^2\hnorm{\A^{3/2}\psi_{t}(\tau)}^2\right\}d\tau.
\end{align*}
Inserting into \eqref{est1} we end up with
\begin{lemma} We have the estimate
\begin{equation}\label{est2}
\begin{aligned}
&\cE[w](t)+E[\psi](t)\\
&+\tilde{b}\int_0^t\Bigl\{
\hnorm{w_{ttt}(\tau)}^2+\hnorm{\A^{1/2}w_{tt}(\tau)}^2+\hnorm{\A w_t(\tau)}^2+\hnorm{\A w(\tau)}^2\\ &
\qquad+\hnorm{\psi_{tttt}(\tau)}^2+\hnorm{\A \psi_{ttt}(\tau)}^2+\hnorm{\A^{3/2}\psi_{tt}(\tau)}^2+\hnorm{\A^{3/2}\psi_t(\tau)}^2\Bigr\}d\tau\\
&\leq \tilde{C} \left(\cE[w](0)+E[\psi](0)+\int_0^t\left\{\hnorm{-\sigma(\psi_t^2)_{ttt}(\tau)}^2+\hnorm{-\sigma(\psi_t^2)_{tt}}^2\right\}d\tau\right)
\end{aligned}
\end{equation}
with $\tilde{b}$ sufficiently small and $\tilde{C}$ sufficiently large.
\end{lemma}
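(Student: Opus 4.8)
The plan is to combine the already-established estimate \eqref{est1} with the three heat-equation energy identities displayed just above, which are all instances of \eqref{id:heat1}. First I would note that, on sufficiently smooth solutions, $D_h=\partial_t+a\A$ commutes with $\partial_t$ and with $\A^{1/2}$, so that $D_h\psi_{ttt}=w_{ttt}$, $D_h(\A^{1/2}\psi_{tt})=\A^{1/2}w_{tt}$ and $D_h(\A^{1/2}\psi_t)=\A^{1/2}w_t$; feeding $v=\psi_{ttt}$, $v=\A^{1/2}\psi_{tt}$ and $v=\A^{1/2}\psi_t$ into \eqref{id:heat1} then gives exactly the three relations above. In the third of them one additionally invokes the embedding $\D(\A^{1/2})\hookrightarrow\h$ from \eqref{Poincare} to replace $\hnorm{\A^{1/2}w_t}$ by $\hnorm{\A w_t}$, which is why that line is an inequality rather than an identity; this is harmless because in \eqref{est1} the term $\int_0^t\hnorm{\A w_t}^2\,d\tau$ sits on the coercive left-hand side, so a lower bound for it is precisely what we want.

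Next I would, in \eqref{est1}, split the coefficient $\check{b}$ in front of the three integrals $\int_0^t\hnorm{w_{ttt}}^2$, $\int_0^t\hnorm{\A^{1/2}w_{tt}}^2$ and $\int_0^t\hnorm{\A w_t}^2$ into two equal halves. One half of each I keep verbatim, which reproduces the $w$-part of the time integral on the left of \eqref{est2} (the fourth term $\int_0^t\hnorm{\A w}^2$ is simply carried along). To the other half I apply the corresponding relation from the first step: each substitution turns the $w$-integral into a boundary term at time $t$ --- proportional to $\hnorm{\A^{1/2}\psi_{ttt}(t)}^2$, $\hnorm{\A\psi_{tt}(t)}^2$ and $\hnorm{\A\psi_t(t)}^2$ respectively --- minus the analogous boundary term at $t=0$, plus new positive time integrals of $\hnorm{\psi_{tttt}}^2$, $\hnorm{\A\psi_{ttt}}^2$, $\hnorm{\A^{3/2}\psi_{tt}}^2$ and $\hnorm{\A^{3/2}\psi_t}^2$ (together with lower-order positive terms such as $\hnorm{\A^{1/2}\psi_{ttt}}^2$ and $\hnorm{\A^{1/2}\psi_{tt}}^2$, which may simply be dropped from the left-hand side). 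The three boundary terms at time $t$ assemble, up to a fixed positive factor, into $E[\psi](t)$, which I add to the $\cE[w](t)$ already present in \eqref{est1}; the three boundary terms at $t=0$ are each controlled by $E[\psi](0)$ and moved to the right.

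Finally I would divide the resulting inequality by a suitable positive constant (positive because $\check{b}$, $a>0$ and the embedding constant is finite) so that $\cE[w](t)+E[\psi](t)$ appears with coefficient one, rename the resulting coefficient in front of the time integrals as $\tilde{b}$ and the multiplicative constant on the right as $\tilde{C}$, and observe that the two nonlinear source terms from the right of \eqref{est1} reappear on the right of \eqref{est2} (with $\tilde{C}$ absorbing the fixed factor $\sigma^2$ coming from $f=-\sigma(\psi_t^2)_{tt}$, $f_t=-\sigma(\psi_t^2)_{ttt}$). This yields \eqref{est2} with $\tilde{b}$ sufficiently small and $\tilde{C}$ sufficiently large. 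I expect the only delicate point to be the bookkeeping in this splitting step: one must choose the halves (and, if needed, unequal fractions) so that every one of the eight time-integral quantities listed in \eqref{est2} survives with a genuinely positive coefficient, and check that the lower-order terms produced by the heat identities are dominated by --- or can be discarded in favour of --- the higher-order ones already on the list. There is no further analytic obstacle, since all the substantive work is contained in \eqref{est1}, \eqref{ieq:Th51iii} and \eqref{id:heat1}.
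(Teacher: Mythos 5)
Your proposal is correct and follows essentially the same route as the paper: the paper also derives \eqref{est2} by applying the heat-equation identity \eqref{id:heat1} with $v=\psi_{ttt}$, $v=\A^{1/2}\psi_{tt}$ and $v=\A^{1/2}\psi_t$ (using \eqref{Poincare} to pass from $\A^{1/2}w_t$ to $\A w_t$ in the third case) and then inserting the resulting boundary terms and $\psi$-integrals into \eqref{est1}. The only difference is that you make the splitting of the coefficient $\check{b}$ and the discarding of the lower-order positive terms explicit, which the paper leaves implicit in the phrase ``inserting into \eqref{est1}.''
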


It remains to estimate the (quadratic, hence small for small $\psi$) terms on the right-hand side in terms of the left-hand side.
We have
for $\tau\in(0,t)$
\begin{equation*}
\begin{aligned}
\hnorm{-\sigma (\psi_t^2)_{ttt}(\tau))^2}
&\leq 4\sigma^2\hnorm{\psi_t \psi_{tttt}(\tau)}^2
+12\sigma^2\hnorm{\psi_{tt} \psi_{ttt}(\tau)}^2\\
&\leq 4\sigma^2 C_{\DA\hookrightarrow L^\infty}^2\sup_{s\in(0,t)} \hnorm{\A\psi_t(s)}^2 \hnorm{\psi_{tttt}(\tau)}^2\\
&\quad+12\sigma^2 C_{\DA\hookrightarrow L^\infty}^2\sup_{s\in(0,t)} \hnorm{\A\psi_{tt}(s)}^2 \hnorm{\psi_{ttt}(\tau)}^2
\end{aligned}
\end{equation*}
where we have used the embedding \eqref{linfDA}.

Similarly we get
\begin{equation*}
\begin{aligned}
\hnorm{-\sigma(\psi_t^2)_{tt}(\tau)}^2
&\leq 4\sigma^2\hnorm{\psi_t \psi_{ttt}(\tau)}^2
+4\sigma^2\hnorm{(\psi_{tt}(\tau))^2}^2\\
\nonumber&\leq 4\sigma^2 C_{\DA\hookrightarrow L^\infty}^2\sup_{s\in(0,t)} \hnorm{\A\psi_t(s)}^2 \hnorm{\psi_{ttt}(\tau)}^2\\
\nonumber&\quad+4\sigma^2 C_{\DA\hookrightarrow L^\infty}^2\sup_{s\in(0,t)} \hnorm{\A\psi_{tt}(s)}^2 \hnorm{\psi_{tt}(\tau)}^2.
\end{aligned}
\end{equation*}
Hence
\begin{align*}
&\int_0^t\left\{
 \hnorm{(\psi_t^2)_{ttt}(\tau)}^2+\hnorm{(\psi_t^2)_{tt}(\tau)}^2\right\}d\tau\\
&\leq 24\sigma^2 C_{\DA \hookrightarrow L^\infty}^2 \sup_{s\in(0,t)} E[\psi](s)
\int_0^t\left\{\hnorm{\psi_{tttt}(\tau)}^2+\hnorm{\psi_{ttt}(\tau)}^2+\hnorm{\psi_{tt}(\tau)}^2
\right\} d\tau
\end{align*}
holds. Inserting the latter into \eqref{est2} and using the fact that under the integral on the left-hand side we have
\begin{equation*}
\hnorm{\A^{1/2}w_{tt}(\tau)}^2+\hnorm{\A w_t(\tau)}^2+\hnorm{\A w(\tau)}^2
\geq \frac{2}{\max\{1,C_{\D(\A^{1/2})\hookrightarrow\h}^2\}} \cE[w](\tau)
\end{equation*}
and
\begin{equation*}
\hnorm{\A \psi_{ttt}(\tau)}^2
+\hnorm{\A^{3/2}\psi_{tt}(\tau)}^2+\hnorm{\A^{3/2}\psi_t(\tau)}^2
\geq \frac{2}{C_{\D(\A^{1/2})\hookrightarrow\h}^2} E[\psi](\tau)
\end{equation*}
yields the estimate
\begin{equation}\label{est3}
\begin{aligned}
&\cE[w](t)+E[\psi](t) +\bar{b}\int_0^t\left\{
r(\tau)+\cE[w](\tau)+E[\psi](\tau)+e[\psi](\tau)
\right\}d\tau\\
&\leq \bar{C} \left(\cE[w](0)+E[\psi](0)+\sup_{s\in(0,t)}E[\psi](s)\int_0^t e[\psi](\tau)d\tau\right)
\end{aligned}
\end{equation}
where we abbreviated
\begin{align}\label{energy:r}
r(t):=~&\hnorm{w_{ttt}(t)}^2+\hnorm{\A^{1/2}w_{tt}(t)}^2+\hnorm{\A w_t(t)}^2+\hnorm{\A w(t)}^2\\
&+\hnorm{\psi_{tttt}(t)}^2+\hnorm{\A \psi_{ttt}(t)}^2+\hnorm{\A^{3/2}\psi_{tt}(t)}^2+\hnorm{\A^{3/2}\psi_t(t)}^2,\nonumber\\
\label{energy:e}
e[\psi](t):=~&\hnorm{\psi_{tttt}(t)}^2+\hnorm{\A^{1/2}\psi_{ttt}(t)}^2+\hnorm{\A^{3/2}\psi_{tt}(t)}^2.
\end{align}
Using \eqref{energy:sum}, i.e. $\Lambda(t):=E[\psi](t)+\cE[w](t)$, yields the final result for this section.
\begin{proposition}\label{prop:energy}
The estimate
\begin{equation}\label{energyestimate:final}
\begin{aligned}
&\Lambda(t) +\bar{b}\int_0^t\left\{
r(\tau)+\Lambda(\tau)+e[\psi](\tau)
\right\}d\tau\\
&\leq \bar{C} \left(\Lambda(0)+\sup_{s\in(0,t)}E[\psi](s)\int_0^t e[\psi](\tau)d\tau\right)
\end{aligned}
\end{equation}
holds with $\bar{b}>0$ sufficiently small and $\bar{C}>0$ sufficiently large.
\end{proposition}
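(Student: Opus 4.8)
The plan is to collect the chain of inequalities already built up in this section and then rewrite them using the abbreviation $\Lambda$. The proposition is essentially a restatement of \eqref{est3} with the definition \eqref{energy:sum} inserted, so almost all of the work has already been carried out and the concluding step is a substitution.

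First I would recall how \eqref{est3} is produced. The Westervelt-type estimate \eqref{est1} for $\cE[w]$ comes from Proposition~3 of \cite{KL09} applied to the lower-order energy $\cE_0[w]$ of \eqref{energy0:westervelt}, combined with Lemma~\ref{lem:Prop51} and the algebraic identity $b^2\hnorm{\A w_t}^2=\hnorm{w_{tt}+c^2\A w-f}^2$, the auxiliary constant $\lambda$ in \eqref{est:Th51iii} being chosen small enough to absorb the indefinite term $\int_0^t\hnorm{\A w_t}^2\,d\tau$. Next, the heat-equation identity \eqref{id:heat1}, applied successively to $v=\psi_{ttt}$, $v=\A^{1/2}\psi_{tt}$ and $v=\A^{1/2}\psi_t$, turns the $w$-time-integrals on the left of \eqref{est1} into the full set of $\psi$-time-integrals appearing in \eqref{est2}. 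Then the Leibniz rule together with the embedding \eqref{linfDA} (and \eqref{Poincare}) bounds the nonlinear right-hand side of \eqref{est2} by $C\,\sup_{s\in(0,t)}E[\psi](s)\int_0^t e[\psi](\tau)\,d\tau$. Finally the coercivity inequalities $\hnorm{\A^{1/2}w_{tt}}^2+\hnorm{\A w_t}^2+\hnorm{\A w}^2\geq \frac{2}{\max\{1,C_{\D(\A^{1/2})\hookrightarrow\h}^2\}}\cE[w]$ and $\hnorm{\A\psi_{ttt}}^2+\hnorm{\A^{3/2}\psi_{tt}}^2+\hnorm{\A^{3/2}\psi_t}^2\geq \frac{2}{C_{\D(\A^{1/2})\hookrightarrow\h}^2}E[\psi]$ let $\cE[w](\tau)$ and $E[\psi](\tau)$ be reinserted under the integral on the left, which yields \eqref{est3}.

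The concluding step is immediate: I would substitute $\Lambda(t)=E[\psi](t)+\cE[w](t)$ from \eqref{energy:sum} into \eqref{est3}, so that $\cE[w](t)+E[\psi](t)$ on the left becomes $\Lambda(t)$, the integrand term $\cE[w](\tau)+E[\psi](\tau)$ becomes $\Lambda(\tau)$, and $\cE[w](0)+E[\psi](0)$ on the right becomes $\Lambda(0)$, while $r(\tau)$, $e[\psi](\tau)$ and the supremum term are carried over unchanged. This is precisely \eqref{energyestimate:final}, with $\bar b$ and $\bar C$ the constants already obtained in \eqref{est3}.

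There is no genuine obstacle in the final step; the one thing requiring care throughout is the bookkeeping of the various small parameters, namely that $\lambda$ be small relative to $\hat b$, $b$ and $c$ to close \eqref{est:Th51iii}, then $\check b$ small and $\check C$ large to pass from \eqref{est:prop3} and \eqref{est:Th51ii} to \eqref{est1}, then $\tilde b$ small and $\tilde C$ large after inserting the heat identities, and finally $\bar b$ small and $\bar C$ large after the nonlinear absorption; one should check that these choices are mutually compatible and that the resulting $\bar b$ remains strictly positive. As recorded in the remark preceding \eqref{energy0:westervelt}, all the time-differentiations, inner products and integrations by parts are first performed for sufficiently smooth solutions and then extended to the general case by a density argument.
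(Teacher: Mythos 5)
Your proposal is correct and follows essentially the same route as the paper: Section 4 derives \eqref{est1}, \eqref{est2} and \eqref{est3} in exactly the order you describe, and the paper's own proof of Proposition \ref{prop:energy} is precisely the final substitution of $\Lambda(t)=E[\psi](t)+\cE[w](t)$ into \eqref{est3}. Your remarks on the compatibility of the smallness parameters and on the density argument match the paper's treatment.
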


\section{Well-posedness of the nonlinear problem}
\label{sec:nonlinear}
\subsection{The nonlinear problem}
In this section we consider the initial boundary value problem \eqref{abbreq}
on an open and bounded domain $\Om \subseteq \R^d$, $d\in\{1,2,3\}$,
with smooth boundary $\partial\Om$. For now we restrict ourselves to a finite time interval,
i.e. $t\in[0,T]$ where $T<\infty$.

We again use the differential operators
\begin{equation*}
D_h := a\Delta - \partial_t
\quad\mbox{and}\quad
D_w := \partial_t^2-c^2\Delta-b\Delta\partial_t.
\end{equation*}
where this time we inserted $\A=-\Delta$. With this notation, the linearized version of \eqref{abbreq} reads
\begin{equation*}
D_h D_w \psi = f.
\end{equation*}
Explicitly, we have
\begin{align}
D_w\psi&=\psi_{tt}-c^2\Delta\psi - b\Delta\psi_t=\tilde{f},
\label{westervelteqn} \\ \label{heateqn}
D_h \tilde{f}&= a\Delta\tilde{f}-\tilde{f}_{t}= f.
\end{align}
Note that \eqref{heateqn} is the classical heat equation and \eqref{westervelteqn} is the
linearized Westervelt equation which has been studied, e.g., in \cite{KL09}
and can as well be used in order to describe elastic systems with structural damping (cf.\ \cite{ChenTriggiani:1989}).
Later on we will insert $f=\sigma(\psi_t^2)_{tt}$.
\begin{proposition}[Regularity results for the linearized Westervelt equation]
\label{reg:westervelt}
Consider the linearized Westervelt equation $D_w\psi=\ti{f}$. \\
(i) Suppose $\tilde{f} \in \liilii \cap H^1(0,T;H^{-1}(\Omega))$ and
			$\psi_0 \in \hii \cap H_0^1(\Om)$, $\psi_1 \in H_0^1(\Om)$, $\psi_2 \in \lii$. Then
			\begin{equation*}
			\psi \in C(0,T;\hii \cap\hio) \cap C^1(0,T;H_0^1(\Om)) \cap \ciilii \cap H^2(0,T;H_0^1(\Om)).
			\end{equation*}
(ii) If, in addition to (i), $\tilde{f} \in \clii$ and
			$\psi_1 \in \hii \cap\hio$, then we also have $\psi \in C^1(0,T;\hii \cap\hio)$.\\
(iii) If, in addition to (i), $\tilde{f} \in \hilii$ and
			$\psi_2 \in H_0^1(\Om)$, then we also have $\psi \in \hiiilii$.			
\end{proposition}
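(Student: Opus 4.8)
The plan is to view \eqref{westervelteqn} as the abstract structurally damped wave equation $\psi_{tt}+b\A\psi_t+c^2\A\psi=\tilde f$ with $\A=-\Delta$, i.e.\ the linearized Westervelt equation studied in \cite{KL09} and \cite{ChenTriggiani:1989}, and to read off the three regularity statements from a short hierarchy of energy identities. All identities are first established on Galerkin approximations in the eigenbasis of $\A$, where the equation may be differentiated in time at will, and then transferred to the limit by weak compactness and density; uniqueness follows by testing the lowest-order identity (multiplication by $\psi_t$) with the difference of two solutions having vanishing data and forcing.

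For part (i) I would first test the equation with $\A\psi_t$ and integrate by parts in time; this uses only $\psi_0\in\hii\cap\hio$, $\psi_1\in\hio$, $\tilde f\in\liilii$ and yields $\psi\in L^\infty(0,T;\hii\cap\hio)$ together with $\psi_t\in L^\infty(0,T;\hio)\cap L^2(0,T;\hii\cap\hio)$. Then I would differentiate the equation in time, test with $\psi_{tt}$, and absorb $\spr{\tilde f_t}{\psi_{tt}}_\h\le\hnorm{\tilde f_t}_{H^{-1}(\Om)}\hnorm{\psi_{tt}}_{\hio}$ into $b\hnorm{\A^{1/2}\psi_{tt}}^2$ via Young's inequality and \eqref{Poincare}; with $\psi_2\in\lii$, $\psi_1\in\hio$ and $\tilde f\in H^1(0,T;H^{-1}(\Om))$ this gives $\psi_{tt}\in L^\infty(0,T;\lii)\cap L^2(0,T;\hio)$. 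Reading $\psi_{ttt}=\tilde f_t+b\Delta\psi_{tt}+c^2\Delta\psi_t$ off the equation shows $\psi_{ttt}\in L^2(0,T;H^{-1}(\Om))$, so $\psi_{tt}\in L^2(0,T;\hio)\cap H^1(0,T;H^{-1}(\Om))\hookrightarrow\clii$, whence $\psi\in\ciilii$ and, with the earlier bound, $\psi\in H^2(0,T;\hio)$. Likewise $\psi_t\in L^2(0,T;\hii\cap\hio)\cap H^1(0,T;\lii)\hookrightarrow C(0,T;\hio)$, so $\psi\in C^1(0,T;\hio)$; and $\psi(t)=\psi_0+\int_0^t\psi_s\,ds$ with the integrand in $L^1(0,T;\hii\cap\hio)$ gives $\psi\in C(0,T;\hii\cap\hio)$.

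For (ii) and (iii) I would bootstrap off part (i). For (ii), rewrite the equation as the heat equation $\psi_{tt}-b\Delta\psi_t=g$ for $u:=\psi_t$ with $g:=\tilde f+c^2\Delta\psi$; part (i) together with the new hypothesis $\tilde f\in\clii$ gives $g\in C(0,T;\lii)$, $g_t=\tilde f_t+c^2\Delta\psi_t\in L^2(0,T;H^{-1}(\Om))$, and $u(0)=\psi_1\in\hii\cap\hio$, so running the same two-step scheme on this heat equation (differentiate in time, test with $u_t=\psi_{tt}$, the datum being $u_t(0)=g(0)-b\A\psi_1\in\lii$) yields $\psi_{tt}\in L^\infty(0,T;\lii)\cap L^2(0,T;\hio)$ and then, as in part (i), $\psi_{tt}\in\clii$; since $b\A\psi_t=g-\psi_{tt}\in C(0,T;\lii)$ we conclude $\psi_t\in C(0,T;\hii\cap\hio)$, i.e.\ $\psi\in C^1(0,T;\hii\cap\hio)$. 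For (iii), differentiate the equation in time and test with $\psi_{ttt}$: integrating by parts in time in $b\spr{\A\psi_{tt}}{\psi_{ttt}}_\h$ and $c^2\spr{\A\psi_t}{\psi_{ttt}}_\h$, absorbing $\spr{\tilde f_t}{\psi_{ttt}}_\h$ by Young, and bounding the remaining time-integrated and boundary terms ($\int_0^t\hnorm{\A^{1/2}\psi_{ss}}^2\,ds$, $\sup_{(0,t)}\hnorm{\A^{1/2}\psi_s}^2$, $\hnorm{\A^{1/2}\psi_2}^2$) by quantities already controlled in part (i) together with the new hypotheses $\tilde f\in\hilii$ and $\psi_2\in\hio$, one obtains $\int_0^t\hnorm{\psi_{sss}}^2\,ds\le C$, i.e.\ $\psi\in\hiiilii$.

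The main obstacle is not any individual energy identity but the bookkeeping that turns the $L^\infty$-in-time bounds produced by the estimates into genuine time-continuity into the top spaces: this is delicate precisely because $\tilde f$ is only $L^2$ in time in (i) and (iii), so one cannot read $\A\psi$ off the equation pointwise in $t$, and must instead interleave the energy bounds, the equation, the Lions--Magenes embeddings $L^2(0,T;X_1)\cap H^1(0,T;X_0)\hookrightarrow C(0,T;[X_1,X_0]_{1/2})$, and the primitive representation $\psi(t)=\psi_0+\int_0^t\psi_s\,ds$ in exactly the right order. A secondary technical point is the rigorous justification of differentiating the equation in time, which is handled by carrying out all estimates at the Galerkin level first and then removing the extra smoothness by density.
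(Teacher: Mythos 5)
Your argument is correct and rests on the same underlying energy estimates as the paper, but you carry out considerably more of the work yourself, and the comparison is instructive. For part (i) the paper simply cites \cite[Section 2.1]{KL09}; your two-step Galerkin/energy derivation (test with $\A\psi_t$, then differentiate in time and test with $\psi_{tt}$, then recover continuity via the equation and the interpolation embedding) is essentially what that reference does, so nothing is lost, and your explicit intermediate bound $\psi_t\in L^2(0,T;\hii\cap\hio)$ is a useful byproduct not recorded in the statement of (i). For part (ii) the paper's proof is a one-line algebraic read-off: since (i) already yields $\psi_{tt}\in\clii$ and $\psi\in C(0,T;\hii\cap\hio)$, one has $b\Delta\psi_t=\psi_{tt}-c^2\Delta\psi-\tilde f\in\clii$, and elliptic regularity finishes; your parabolic bootstrap for $u=\psi_t$ re-derives $\psi_{tt}\in\clii$, which part (i) already supplies, before arriving at the very same read-off, so it is correct but redundant. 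For part (iii) the paper applies Lemma~\ref{lem:Prop51} (differentiate in time, test with $\psi_{ttt}$, integrate by parts only in the $b$-term), which leaves $c^4\int_0^T\|\Delta\psi_t\|^2\,d\tau$ on the right-hand side and bounds it by invoking $\psi\in C^1(0,T;\hii\cap\hio)\subset H^1(0,T;\hii\cap\hio)$ --- that is, by the conclusion of part (ii) (or, equivalently, by the $L^2(0,T;\D(\A))$ bound on $\psi_t$ implicit in the first energy estimate of \cite{KL09}), even though (iii) is stated as ``in addition to (i)''. Your extra integration by parts in the $c^2$-term replaces that integral by $c^2\int_0^T\hnorm{\A^{1/2}\psi_{tt}}^2\,d\tau$ plus boundary terms that Young's inequality and the $b$-boundary term absorb, all of which are controlled by (i) alone together with $\psi_2\in\hio$ and $\tilde f\in\hilii$; this makes (iii) genuinely self-contained relative to (i) and quietly tidies up a small imprecision in the paper's bookkeeping.
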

\begin{proof} (i) was proved in {\cite[Section 2.1]{KL09}}.\newline
(ii) It is clear that $b\Delta\psi_t=\psi_{tt}-c^2\Delta\psi-\tilde{f} \in \clii$ and thus
$\psi \in C^1(0,T;\hii \cap\hio)$.\newline
(iii) Lemma \ref{lem:Prop51} with $\h=L^2(\Om)$, $\A=-\Delta$, $f=\tilde{f}$ and $t=T$ gives us
\begin{equation*}
\frac{1}{2} \int_0^T \norm{\psi_{ttt}(\tau)}^2 \,d\tau + \frac{b}{2} \norm{\nabla \psi_{tt}(\tau)}^2 \Big|_0^T \leq
c^4 \int_0^T \norm{\Delta \psi_t(\tau)}^2\, d\tau + \int_0^T \norm{\tilde{f}_t(\tau)}^2 \, d\tau
\end{equation*}
and, invoking the assumptions and the fact that for any finite time horizon $T$ we have
$C^1(0,T;\hii \cap\hio) \subset H^1(0,T; \hii \cap\hio)$, we infer $\int_0^T\norm{\psi_{ttt}}^2< \infty$
and thus $\psi \in \hiiilii$ as claimed.
\end{proof}

\begin{proposition}[Regularity results for the heat equation]
\label{reg:heat}
Consider the heat equation $D_h \ti{f}=f$.
Suppose $f\in\liilii$ and $\ti{f}(0) \in \hio$. Then
\begin{equation*}
\ti{f} \in \chio \cap \hilii \cap L^2(0,T;\hii \cap \hio).
\end{equation*}
\end{proposition}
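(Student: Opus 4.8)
The plan is to treat the heat equation $D_h\tilde f = f$, i.e. $\tilde f_t - a\Delta\tilde f = -f$, as a standard parabolic Cauchy problem and invoke the classical $L^2$ maximal regularity theory for the Dirichlet Laplacian. Concretely, I would first recall that $-\Delta$ with domain $\hii\cap\hio$ generates an analytic semigroup on $\lii$, and that for an analytic-semigroup generator the inhomogeneous problem with right-hand side in $\liilii$ and initial data in the trace space $\hio = \D((-\Delta)^{1/2})$ has a unique solution in the maximal regularity class $H^1(0,T;L^2(\Om))\cap L^2(0,T;\D(-\Delta))$, which is exactly $\hilii\cap L^2(0,T;\hii\cap\hio)$. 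This is a textbook statement (e.g. via the Lions–Magenes theory, or Lunardi, or the semigroup calculus already cited in the appendix), so the bulk of the proof is just identifying the data with the hypotheses of that theorem: $f\in\liilii$ is the required source regularity and $\tilde f(0)\in\hio$ is the required initial regularity.

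Next I would address the remaining assertion $\tilde f\in\chio$, i.e. continuity in time with values in $H_0^1(\Om)$. The point here is the embedding
\begin{equation*}
H^1(0,T;L^2(\Om))\cap L^2(0,T;\hii\cap\hio)\hookrightarrow C(0,T;\hio),
\end{equation*}
which is the standard interpolation/trace inequality $\norm{v(t)}_{\hio}^2 \lesssim \norm{v}_{\hilii}\norm{v}_{L^2(0,T;\hii)} + \norm{v}_{\liilii}^2$ obtained from $\frac{d}{dt}\norm{\A^{1/2}v}^2 = 2\spr{\A^{1/2}v_t}{\A^{1/2}v} = 2\spr{v_t}{\A v}$ and Cauchy–Schwarz, combined with density of smooth functions. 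Thus the three membership claims collapse into: solve the heat equation by maximal regularity, then read off the time-continuity from the mixed space via this embedding.

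There is essentially no serious obstacle here; the only thing requiring a little care is the precise characterization of the trace space of $H^1(0,T;L^2)\cap L^2(0,T;\D(\A))$ at $t=0$, namely that it equals $\D(\A^{1/2}) = \hio$ (with equivalent norms, using the assumed smoothness of $\partial\Om$ so that $\D((-\Delta)^{1/2})=\hio$ genuinely holds). Once that identification is granted — and it is classical — the hypothesis $\tilde f(0)\in\hio$ is exactly compatible, no compatibility conditions at the boundary are needed beyond the zero trace already built into $\hio$, and the conclusion follows. So the proof is short: cite maximal parabolic regularity for $-\Delta$ on $\lii$, invoke the continuous embedding into $\chio$, and match hypotheses. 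I would spend at most a sentence or two on each of these points rather than reproving the maximal regularity estimate from scratch.
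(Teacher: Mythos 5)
Your proposal is correct and matches the paper's approach: the paper's entire proof is a citation of the classical improved regularity theorem for second-order parabolic equations (\cite[Section 7.1, Theorem 5]{Evans:2010}), which is exactly the maximal-regularity statement you invoke, with the same identification of $f\in\liilii$ and $\ti{f}(0)\in\hio$ as the hypotheses and the same reading-off of $\chio$ from the mixed space. Your extra remarks on the trace space $\D(\A^{1/2})=\hio$ and the embedding into $\chio$ are correct elaborations of what the citation already contains.
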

\begin{proof}
This improved regularity result for general second-order parabolic
equations has been proved, e.g., in \cite[Section 7.1, Theorem 5]{Evans:2010}.
\end{proof}
Note that, in order to combine Propositions \ref{reg:westervelt} and \ref{reg:heat},
we have to assume $\psi_0, \psi_1 \in H^3(\Om) \cap \hio$ which ensures
$\tilde{f}(0) \in \hio$ in Proposition \ref{reg:heat}.
\subsection{Local well-posedness}
Motivated by Propositions \ref{reg:westervelt} and \ref{reg:heat},
we define the space
\begin{equation}\label{detV}
\begin{aligned}
\mathcal{V}:=~& C^1(0,T;\hii \cap \hio) \cap \ciilii \\ & \cap H^2(0,T;H_0^1(\Om)) \cap \hiiilii
\end{aligned}
\end{equation}
equipped with the norm
\begin{equation*}
\|.\|_\mathcal{V}:= \|.\|_{\cihii} + \|.\|_{\ciilii}+\|.\|_{\hiihi} + \|.\|_{\hiiilii}.
\end{equation*}
We consider the equation
\begin{equation}\label{abstnonlineareqphi}
\left(a\Delta - \partial_t \right)
\left(\psi_{tt}-c^2\Delta\psi-b\Delta\psi_t \right) =
\sigma(\varphi_t)_{tt}^2.
\end{equation}
Our strategy in order to prove local existence of solutions of
\eqref{abbreq} is to apply Banach's Fixed Point Theorem to the map
\begin{equation*}
\mathcal{T}:\mathcal{W}\rightarrow\mathcal{V}
\end{equation*}
defined by
\begin{equation*}
\mathcal{T}(\varphi)=\psi
\end{equation*}
where $\psi$ is a solution of \eqref{abstnonlineareqphi} and the space $\mathcal{W}$
is given by
\begin{equation*}
\mathcal{W}=B_R^{\mathcal{V}}(0)=\{v\in \mathcal{V}: \|v\|_{\mathcal{V}} \leq \bar{m}\}
\end{equation*}
where $\bar{m}$ has to be suitably chosen later. Therefore, we need to show that $\mathcal{T}$ is a self-mapping,
$\mathcal{W}$ is closed and furthermore, that $\mathcal{T}$ is a contraction.

\vspace{5pt}
\noindent\textbf{Step 1.}  $\mathcal{T}: \mathcal{W} \rightarrow \mathcal{V}$ is a self-mapping.

This can be achieved by taking sufficiently
small initial data.
\begin{lemma}\label{lem:estf}
Let $\varphi \in \mathcal{V}$. Then $f= \sigma(\varphi_t)^2_{tt} \in \liilii$ and
\begin{equation*}
\|f\|_{\liilii} \leq C \|\varphi\|_{\mathcal{V}}^2.
\end{equation*}
\end{lemma}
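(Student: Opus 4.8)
The plan is to expand $f = \sigma(\varphi_t^2)_{tt}$ by the product rule and estimate each resulting term in $\liilii$ using the algebra/embedding structure available on the three-dimensional domain $\Om$. Writing $(\varphi_t^2)_{tt} = 2\varphi_t\varphi_{ttt} + 2\varphi_{tt}^2$, it suffices to bound $\|\varphi_t\varphi_{ttt}\|_{\liilii}$ and $\|\varphi_{tt}^2\|_{\liilii}$ by $C\|\varphi\|_{\mathcal V}^2$.

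For the first term, the natural split is to put $\varphi_t$ in $L^\infty$ in space and $\varphi_{ttt}$ in $L^2$ in space:
\begin{equation*}
\|\varphi_t\varphi_{ttt}(\tau)\| \leq \|\varphi_t(\tau)\|_{L^\infty(\Om)}\,\|\varphi_{ttt}(\tau)\|
\leq C_{\DA\hookrightarrow L^\infty}\,\|\varphi_t(\tau)\|_{\hii}\,\|\varphi_{ttt}(\tau)\|,
\end{equation*}
using \eqref{linfDA} together with the Dirichlet boundary condition so that $\varphi_t(\tau)\in\hii\cap\hio$; here I use $\varphi\in C^1(0,T;\hii\cap\hio)$ and $\varphi\in\hiiilii$. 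Integrating in time, $\|\varphi_t\|_{C(0,T;\hii)}$ is a finite multiple of $\|\varphi\|_{\cihii}$, which pulls out of the integral, and $\int_0^T\|\varphi_{ttt}(\tau)\|^2\,d\tau = \|\varphi\|_{\hiiilii}^2$ (after discarding lower-order time-derivative contributions to that norm). Hence $\|\varphi_t\varphi_{ttt}\|_{\liilii} \leq C\|\varphi\|_{\cihii}\|\varphi\|_{\hiiilii} \leq C\|\varphi\|_{\mathcal V}^2$. For the second term, $\|\varphi_{tt}^2(\tau)\| = \|\varphi_{tt}(\tau)\|_{L^4(\Om)}^2 \leq C\|\varphi_{tt}(\tau)\|_{\hi}^2$ by the Sobolev embedding $H^1(\Om)\hookrightarrow L^4(\Om)$ valid for $d\leq 3$; alternatively one puts one factor in $L^\infty$ via \eqref{linfDA} and one in $L^2$, using $\varphi\in\ciilii$ and $\varphi\in C^1(0,T;\hii\cap\hio)$ applied to $\varphi_t$. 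Either way, integrating in $\tau$ over the finite interval $[0,T]$ gives $\|\varphi_{tt}^2\|_{\liilii} \leq C\|\varphi\|_{\mathcal V}^2$, where the factor $T$ is absorbed into $C$.

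Combining the two estimates with the triangle inequality and the constant $\sigma$ yields $f\in\liilii$ with $\|f\|_{\liilii}\leq C\|\varphi\|_{\mathcal V}^2$, as claimed; all the embedding constants involved are finite precisely because $d\in\{1,2,3\}$, which is exactly why the dimension restriction was imposed. The only mild subtlety — not really an obstacle — is bookkeeping: one must verify that each factor lands in a space controlled by one of the four norms defining $\|.\|_{\mathcal V}$, and in particular that the boundary condition $\varphi=0$ on $\partial\Om$ propagates to $\varphi_t$ (hence $\varphi_t\in\hio$) so that the $\hii\cap\hio\hookrightarrow L^\infty(\Om)$ embedding \eqref{linfDA} is applicable rather than the weaker plain $\hii$ embedding. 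There are no genuinely hard analytic points here; the lemma is a routine nonlinear estimate whose role is simply to feed the fixed-point argument in the subsequent steps.
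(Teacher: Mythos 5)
Your expansion of $f$ and your treatment of the term $\varphi_t\varphi_{ttt}$ coincide with the paper's proof and are correct: $\varphi_t\in C(0,T;\hii\cap\hio)\hookrightarrow C(0,T;L^\infty(\Om))$ paired with $\varphi_{ttt}\in\liilii$ gives the bound $C_{H^2\hookrightarrow L^\infty}\|\varphi\|_{\cihii}\|\varphi\|_{\hiiilii}$. The gap is in the term $\varphi_{tt}^2$, and neither of the two routes you sketch closes. The norm $\|\cdot\|_{\mathcal V}$ controls $\varphi_{tt}$ only in $L^2(0,T;H_0^1(\Om))$ (from $H^2(0,T;H_0^1(\Om))$), in $H^1(0,T;\lii)$ (from $\hiiilii$), and in $C(0,T;\lii)$ (from $\ciilii$). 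Your first route, $\|\varphi_{tt}(\tau)\|_{L^4(\Om)}\leq C\|\varphi_{tt}(\tau)\|_{\hi}$ followed by integration in $\tau$, requires $\int_0^T\|\varphi_{tt}(\tau)\|_{\hi}^4\,d\tau<\infty$, i.e.\ $\varphi_{tt}\in L^4(0,T;\hi)$; but only $L^2(0,T;\hi)$ is available, and on a finite interval H\"older's inequality goes the wrong way ($L^4(0,T)\hookrightarrow L^2(0,T)$, not the converse), so the factor $T$ cannot be ``absorbed'' to upgrade the time integrability. Your second route puts one factor of $\varphi_{tt}$ in $L^\infty(\Om)$ via \eqref{linfDA}, which would require $\varphi_{tt}(\tau)\in\hii\cap\hio$ uniformly in $\tau$, i.e.\ $\varphi\in C^2(0,T;\hii\cap\hio)$; applying ``$C^1(0,T;\hii\cap\hio)$ to $\varphi_t$'' presupposes exactly this regularity, which is not part of $\mathcal V$.

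The missing idea is a genuine space--time interpolation. The paper interpolates between $\varphi_{tt}\in L^2(0,T;H_0^1(\Om))$ and $\varphi_{tt}\in H^1(0,T;\lii)$ using the Besov/$J$-method machinery of Definition \ref{def:Besov} and Remark \ref{rem:Besov}, placing $\varphi_{tt}$ in $H^{1/4}(0,T;H^{3/4}(\Om))$, which embeds into $L^4(0,T;L^4(\Om))$ by the (critical) embeddings $H^{1/4}(0,T)\hookrightarrow L^4(0,T)$ and $H^{3/4}(\Om)\hookrightarrow L^4(\Om)$ for $d\leq 3$. This yields
\begin{equation*}
\|\varphi_{tt}^2\|_{\liilii}=\|\varphi_{tt}\|_{L^4(0,T;L^4(\Om))}^2
\leq C\,\|\varphi_{tt}\|_{\liihi}^{3/2}\,\|\varphi_{tt}\|_{\hilii}^{1/2}
\leq C\,\|\varphi\|_{\mathcal V}^2,
\end{equation*}
trading some of the spatial regularity for the time integrability that your argument lacks. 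A purely spatial Gagliardo--Nirenberg estimate combined with $\varphi_{tt}\in C(0,T;\lii)$ still only yields $\int_0^T\|\varphi_{tt}\|_{\hi}^3\,d\tau$ on the right, which is again not controlled; so the anisotropic interpolation is not a cosmetic choice but the essential step of the proof.
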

\begin{proof}
We have $f= \sigma(\varphi_t)^2_{tt}=2\sigma(\varphi_{tt}^2+\varphi_t\varphi_{ttt})$.
As $\varphi\in \mathcal{V}$, we also have $\varphi \in \hiiilii$ and thus $\varphi_{ttt}\in \liilii$.
Furthermore, $\varphi \in \mathcal{V}$ implies $\varphi \in C^1(0,T;\hii \cap \hio)$.
Therefore $\varphi_{t} \in C(0,T;\hii \cap \hio)$.
Due to the fact that $\hii \cap \hio \subset L^\infty(\Omega)$ we arrive at $\varphi_t\in C(0,T;L^\infty(\Om))$.
Recalling that $C(0,T;L^\infty(\Om))$ is an ideal in $\liilii$ we conclude $\varphi_t \varphi_{ttt}
\in \liilii$. Therefore we estimate
\begin{align*}\|\varphi_t \varphi_{ttt}\|_{\liilii} & \leq  \|\varphi_t\|_{C(0,T;L^\infty(\Om))} \|\varphi_{ttt}\|_{\liilii}\\
\nonumber & \leq  \|\varphi\|_{C^1(0,T;L^\infty(\Om))}\|\varphi\|_{\hiiilii} \\
\nonumber & \leq  C_{H^2\hookrightarrow L^\infty} \|\varphi\|_{\cihii} \|\varphi\|_{\hiiilii}\\
\nonumber & \leq  C_{H^2\hookrightarrow L^\infty} \|\varphi\|_\mathcal{V}^2
\end{align*}
where $C_{H^2\hookrightarrow L^\infty}$ is the norm of the
embedding $H^2(\Om)\cap \hio\hookrightarrow L^\infty(\Om)$.

Next, we claim $\varphi_{tt}^2 \in \liilii$.
Our aim is to obtain $\varphi_{tt} \in L^4(0,T,L^4(\Om))$.
As $\varphi \in \mathcal{V}$, we have in particular $\varphi_{tt} \in L^2(0,T;H_0^1(\Om)) \cap \hilii$.
We use the Besov spaces
\begin{align*}
B^{1-\theta;2,2}(\Om)&=\left( L^2(\Om), W^{1,2}(\Om)\right)_{1-\theta,q;J},\\
B^{\theta;2,2}(0,T)&=\left( L^2(0,T), W^{1,2}(0,T)\right)_{\theta,q;J}.
\end{align*}
From Remark \ref{rem:Besov} we see that $B^{\theta;2,2}(\Om)=H^{\theta}(\Om)$ and
$B^{1-\theta;2,2}(0,T)=H^{1-\theta}(0,T)$. Therefore
$\varphi_{tt} \in H^{1-\theta}(0,T;H^\theta(\Om))$.
Due to the Sobolev Embedding Theorem, with the specific choice $\theta=\frac{3}{4}$,
we obtain $\varphi_{tt} \in H^{1/4}(0,T;H^{3/4}(\Om))\hookrightarrow L^4(0,T; L^4(\Om))$
and thus $\varphi_{tt}^2 \in \liilii$ as claimed. Therefore we may estimate
\begin{equation}\label{interpolate}
\begin{aligned}
\|\varphi_{tt}^2\|_{\liilii} & =\|\varphi_{tt}\|_{L^4(0,T;L^4(\Om))}^2\\
&\leq C_{H^{1/4}\hookrightarrow L^4}^2 C_{H^{3/4}\hookrightarrow L^4}^2
\|\varphi_{tt}\|_{H^{1/4}(0,T;H^{3/4}(\Om))}^2 \\
&\leq C_{H^{1/4}\hookrightarrow L^4}^2 C_{H^{3/4}\hookrightarrow L^4}^2
\|\varphi_{tt}\|_{\liihi}^{3/2} \|\varphi_{tt}\|_{\hilii}^{1/2} \\
&\leq C_{H^{1/4}\hookrightarrow L^4}^2 C_{H^{3/4}\hookrightarrow L^4}^2
\|\varphi\|_{\mathcal{V}}^2.
\end{aligned}
\end{equation}
Here, $C_{H^{1/4}\hookrightarrow L^4}^2$ and $C_{H^{3/4}\hookrightarrow L^4}^2$ denote
the norms of the embeddings $H^{1/4}(0,T)\hookrightarrow L^4(0,T)$ and
$H^{3/4}(\Om)\hookrightarrow L^4(\Om)$, respectively. Altogether, we have the desired estimate \eqref{lem:estf}
where $C=2\sigma\left(C_{H^{1/4}\hookrightarrow L^4}^2 C_{H^{3/4}\hookrightarrow L^4}^2+
C_{H^2\hookrightarrow L^\infty}\right)$.
\end{proof}

From Proposition \ref{reg:heat} and Lemma \ref{lem:estf}, we get
\begin{align}
\|\tilde{f}\|_{C(0,T;H^1(\Om)) \cap \hilii \cap \liihii} \nonumber
&\leq C_1 \big(\|f\|_{\liilii}+\|\tilde{f}(0)\|_{H^1(\Om)}\big)\\
&\leq C_1 \big(C\|\varphi\|_{\mathcal{V}}^2+\|\tilde{f}(0)\|_{H^1(\Om)}\big)\nonumber
\end{align}
for some constant $C_1$. Due to Proposition \ref{reg:westervelt}, we obtain
\begin{align}
\|\psi\|_{\mathcal{V}}
&\leq C_2 \big(\|\tilde{f}\|_{\hilii} + \|\psi_0\|_{H^2(\Om)}+\|\psi_1\|_{H^2(\Om)}+\|\psi_2\|_{\hi}\big)\nonumber\\
&\leq C_2 \big(C_1 \big(C\|\varphi\|_{\mathcal{V}}^2+\|\tilde{f}(0)\|_{H^1(\Om)}\big)+
\|\psi_0\|_{H^2(\Om)}+\|\psi_1\|_{H^2(\Om)}+\|\psi_2\|_{\hi}\big)\label{eqn:psi}
\end{align}
for some other constant $C_2$.
Suppose $\varphi\in\mathcal{W}$, so in particular $\|\varphi\|_{\mathcal{V}}\leq \bar{m}$.
By taking sufficiently small initial data,
\begin{equation}\label{bound:initialdata}
C_1\|\tilde{f}(0)\|_{H^1(\Om)}+\|\psi_0\|_{H^2(\Om)}+\|\psi_1\|_{H^2(\Om)}+\|\psi_2\|_{\hi}\leq r
\end{equation}
where $r=\frac{1}{4C C_1 C_2}$, we get $\|\psi\|_{\mathcal{V}}\leq \bar{m}$ provided
$\bar{m}\leq\frac{1}{2C C_1C_2}$. Thus, $\mathcal{T}\mathcal{W}\subseteq\mathcal{W}$.

\vspace{2pt}
\noindent\textbf{Step 2.} $\mathcal{W}$ is closed in $\mathcal{V}$. This step is clear as $\mathcal{W}$ is by its definition
a closed ball in $\mathcal{W}$.

\vspace{5pt}
\noindent\textbf{Step 3.} $\mathcal{T}:\mathcal{W}\rightarrow\mathcal{W}$ is a contraction.

It remains to show that $\mathcal{T}:\mathcal{W}\rightarrow\mathcal{W}$ is a
contraction, i.e. there exists a positive constant $\bar{C}<1$ such that
$\|\mathcal{T}(\varphi_1-\varphi_2)\|_\mathcal{V}\leq \bar{C} \|\varphi_1-\varphi_2\|_\mathcal{V}$
where $\varphi_i$, $\psi_i = \mathcal{T}\varphi_i$, $i=1,2$ solve
$D_hD_w\psi_i=\sigma(\varphi^2_{i_t})_{tt}$. Note that
$\hat{\varphi}=\varphi_1-\varphi_2$ and
$\hat{\psi}=\psi_1 - \psi_2 = \mathcal{T}\varphi_1 - \mathcal{T}\varphi_2$ solve
\begin{equation*}
(a\Delta - \partial_t )
(\hat{\psi}_{tt}-c^2\Delta\hat{\psi}-b\Delta\hat{\psi}_t) = \hat{f},
\end{equation*}
i.e. $D_w \hat{\psi}=\tilde{\hat{f}}$, $D_h\tilde{\hat{f}}=\hat{f}$
where $\hat{f}=\sigma\left(\hat{\varphi}_t\left(\varphi_{1_t}+\varphi_{2_t}\right)\right)_{tt}$.
\begin{lemma}\label{lem:hatf}
We have the estimate
\begin{equation*}
\|\hat{f}\|_{\liilii} \leq C\|\hat{\varphi}\|_\mathcal{V}
\left(\|\varphi_1\|_\mathcal{V}+\|\varphi_2\|_\mathcal{V}\right).
\end{equation*}
\end{lemma}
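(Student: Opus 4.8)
The plan is to follow the proof of Lemma~\ref{lem:estf} verbatim, only now the relevant expression is bilinear. Setting $\varphi=\varphi_1+\varphi_2$, so that $\hat{f}=\sigma(\hat\varphi_t\,\varphi_t)_{tt}$, and applying the Leibniz rule twice gives
\begin{equation*}
\hat{f}=\sigma\bigl(\hat\varphi_{ttt}\,\varphi_t+2\,\hat\varphi_{tt}\,\varphi_{tt}+\hat\varphi_t\,\varphi_{ttt}\bigr),
\end{equation*}
so it suffices to bound each of the three products in $\liilii$ by a constant times $\|\hat\varphi\|_{\mathcal{V}}\|\varphi\|_{\mathcal{V}}$ and then invoke the triangle inequality $\|\varphi\|_{\mathcal{V}}\le\|\varphi_1\|_{\mathcal{V}}+\|\varphi_2\|_{\mathcal{V}}$.

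For the first and third products I would argue exactly as for the term $\varphi_t\varphi_{ttt}$ in Lemma~\ref{lem:estf}. Since $\mathcal{V}\hookrightarrow\cihii$ and $\hii\cap\hio\hookrightarrow L^\infty(\Om)$ for $d\le 3$, we get $\hat\varphi_t,\varphi_t\in C(0,T;L^\infty(\Om))$, while $\mathcal{V}\hookrightarrow\hiiilii$ gives $\hat\varphi_{ttt},\varphi_{ttt}\in\liilii$; using that $C(0,T;L^\infty(\Om))$ is an ideal in $\liilii$,
\begin{equation*}
\|\hat\varphi_{ttt}\,\varphi_t\|_{\liilii}+\|\hat\varphi_t\,\varphi_{ttt}\|_{\liilii}
\le C_{H^2\hookrightarrow L^\infty}\bigl(\|\hat\varphi\|_{\hiiilii}\|\varphi\|_{\cihii}+\|\hat\varphi\|_{\cihii}\|\varphi\|_{\hiiilii}\bigr)
\le 2\,C_{H^2\hookrightarrow L^\infty}\|\hat\varphi\|_{\mathcal{V}}\|\varphi\|_{\mathcal{V}}.
\end{equation*}

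For the middle product $\hat\varphi_{tt}\,\varphi_{tt}$ I would reuse the interpolation step from Lemma~\ref{lem:estf}: since $\hat\varphi,\varphi\in\mathcal{V}$ we have $\hat\varphi_{tt},\varphi_{tt}\in\liihi\cap\hilii$, and by the Besov-space identifications together with the choice $\theta=\tfrac34$ both functions lie in $H^{1/4}(0,T;H^{3/4}(\Om))\hookrightarrow L^4(0,T;L^4(\Om))$, with $\|\hat\varphi_{tt}\|_{L^4(0,T;L^4(\Om))}\le C_{H^{1/4}\hookrightarrow L^4}C_{H^{3/4}\hookrightarrow L^4}\,\|\hat\varphi_{tt}\|_{\liihi}^{3/4}\|\hat\varphi_{tt}\|_{\hilii}^{1/4}\le C_{H^{1/4}\hookrightarrow L^4}C_{H^{3/4}\hookrightarrow L^4}\,\|\hat\varphi\|_{\mathcal{V}}$ and similarly for $\varphi$. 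Hölder's inequality with exponents $4,4$ in the space and time variables then gives
\begin{equation*}
\|\hat\varphi_{tt}\,\varphi_{tt}\|_{\liilii}\le\|\hat\varphi_{tt}\|_{L^4(0,T;L^4(\Om))}\|\varphi_{tt}\|_{L^4(0,T;L^4(\Om))}
\le C_{H^{1/4}\hookrightarrow L^4}^2 C_{H^{3/4}\hookrightarrow L^4}^2\,\|\hat\varphi\|_{\mathcal{V}}\|\varphi\|_{\mathcal{V}}.
\end{equation*}
Summing the three contributions and applying the triangle inequality for $\|\cdot\|_{\mathcal{V}}$ yields the claim, with $C$ the same combination of $\sigma$ and the embedding constants as in Lemma~\ref{lem:estf}.

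The only point requiring more than Cauchy--Schwarz is the $L^4(0,T;L^4(\Om))$ bound for the second time derivatives; but this is precisely the interpolation argument already carried out in the proof of Lemma~\ref{lem:estf}, so the present proof is essentially bookkeeping, and I expect no genuine obstacle.
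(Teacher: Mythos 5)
Your proposal is correct and follows essentially the same route as the paper: the same Leibniz expansion of $\hat{f}$ into three products, the same $L^2$--$L^\infty$ estimate (via $\hii\cap\hio\hookrightarrow L^\infty(\Om)$) for the terms involving $\hat\varphi_{ttt}$ and $\varphi_{ttt}$, and the same Besov/interpolation embedding into $L^4(0,T;L^4(\Om))$ for the middle term. The only cosmetic difference is that you group $\varphi=\varphi_1+\varphi_2$ and apply the triangle inequality at the end, whereas the paper carries $\varphi_{1}$ and $\varphi_{2}$ separately through each estimate.
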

\begin{proof}
Explicitly, $\hat{f}=\sigma\left(\hat{\varphi}_{ttt}\left(\varphi_{1_t}+\varphi_{2_t}\right)+
2\hat{\varphi}_{tt}\left(\varphi_{1_{tt}}+\varphi_{2_{tt}}\right) +\hat{\varphi}_t\left(\varphi_{1_{ttt}}+\varphi_{2_{ttt}}\right)\right)$.
We are going to treat the three terms on the right-hand side separately and therefore estimate
{\allowdisplaybreaks
\begin{align}
&\|\hat{\varphi}_{ttt}\left(\varphi_{1_t}+\varphi_{2_t}\right)\|_{\liilii}\nonumber \\
& \leq  \|\hat{\varphi}_{ttt}\|_{\liilii)} \left(\|\varphi_{1_t}\|_{C(0,T;L^\infty(\Om)}+\|\varphi_{2_t}\|_{C(0,T;L^\infty(\Om)}\right)\nonumber\\
&\leq  C_{H^2\hookrightarrow L^\infty} \|\hat{\varphi}\|_{\hiiilii} \left(\|\varphi_{1}\|_{\cihii}+\|\varphi_{2}\|_{\cihii}\right)\nonumber\\
&\leq  C_{H^2\hookrightarrow L^\infty}\|\hat{\varphi}\|_\mathcal{V} \left(\|\varphi_{1}\|_\mathcal{V}+\|\varphi_{2}\|_\mathcal{V}\right)\nonumber\\[2mm]
&\|\hat{\varphi}_t\left(\varphi_{1_{ttt}}+\varphi_{2_{ttt}}\right)\|_{\liilii}\nonumber \\
&\leq C_{H^2\hookrightarrow L^\infty} \|\hat{\varphi}\|_{\cihii} \left(\|\varphi_1\|_{\hiiilii}+\|\varphi_1\|_{\hiiilii}\right)\nonumber \\
&\leq C_{H^2\hookrightarrow L^\infty} \|\hat{\varphi}\|_\mathcal{V}
\left(\|\varphi_1\|_\mathcal{V}+\|\varphi_1\|_\mathcal{V}\right), \nonumber \\[2mm] 
& \|\hat{\varphi}_{tt}\left(\varphi_{1_{tt}}+\varphi_{2_{tt}}\right)\|_{\liilii}\nonumber \\*
&\leq \|\hat{\varphi}_{tt}\|_{L^4(0,T;L^4(\Om))}\left(\|\varphi_{1_{tt}}\|_{L^4(0,T;L^4(\Om))}
+\|\varphi_{2_{tt}}\|_{L^4(0,T;L^4(\Om))}\right),\nonumber\\*
&\leq C_{H^{1/4}\hookrightarrow L^4}^2 C_{H^{3/4}\hookrightarrow L^4}^2\|\hat{\varphi}_{tt}\|_{H^{1/4}(0,T;H^{3/4}(\Om))} \nonumber\\*
& ~~~~~ \cdot \left(\|\varphi_{1}\|_{H^{1/4}(0,T;H^{3/4}(\Om))}+\|\varphi_{2}\|_{H^{1/4}(0,T;H^{3/4}(\Om))}\right)\nonumber\\*
&\leq C_{H^{1/4}\hookrightarrow L^4}^2 C_{H^{3/4}\hookrightarrow L^4}^2
\|\hat{\varphi}_{tt}\|_{\liihi}^{3/4} \|\hat{\varphi}_{tt}\|_{\hilii}^{1/4}\nonumber\\*
& ~~~~~ \cdot \left(\|\varphi_{1_{tt}}\|_{\liihi}^{3/4} \|\varphi_{1_{tt}}\|_{\hilii}^{1/4} +\|\varphi_{2_{tt}}\|_{\liihi}^{3/4} \|\varphi_{2_{tt}}\|_{\hilii}^{1/4}\right)\nonumber\\*
&\leq C_{H^{1/4}\hookrightarrow L^4}^2 C_{H^{3/4}\hookrightarrow L^4}^2
\|\hat{\varphi}\|_\mathcal{V}\left(\|\varphi_1\|_\mathcal{V}+\|\varphi_2\|_\mathcal{V}\right)\nonumber
\end{align}
}
where $C_{H^2\hookrightarrow L^\infty}$, $C_{H^{3/4}\hookrightarrow L^4}$ and
$C_{H^{1/4}\hookrightarrow L^4}$ denote the norms of the embeddings
$\hii \cap\hio \hookrightarrow L^\infty(\Om)$, $H^{3/4}(\Om)\hookrightarrow L^4(\Om)$ and $H^{1/4}(0,T)\hookrightarrow L^4(0,T)$,
respectively (cf. Lemma \ref{lem:estf}). Note that the third estimate is similar to \eqref{interpolate}.
Altogether, we have
\begin{equation*}
\|\hat{f}\|_{\liilii}
\leq C\|\hat{\varphi}\|_\mathcal{V}\left(\|\varphi_1\|_\mathcal{V}+\|\varphi_2\|_\mathcal{V}\right)
\end{equation*}
with $C$ the same constant as in the end of the proof of Lemma \ref{lem:estf}.
\end{proof}

Now, note that $\hat{\psi}(0)=0$, $\hat{\psi}_t(0)=0$, $\hat{\psi}_{tt}(0)=0$ and thus
$\tilde{\hat{f}}(0)=D_w \hat{\psi}(0)=0$. Therefore, by Proposition \ref{reg:heat} and Lemma \ref{lem:hatf}, we obtain
\begin{align*}
\|\tilde{\hat{f}}\|_{C(0,T;H^1(\Om)) \cap \hilii \cap \liihii} &\leq C_1 \|\hat{f}\|_{\liilii}\\
&\leq CC_1\|\hat{\varphi}\|_\mathcal{V}\left(\|\varphi_1\|_\mathcal{V}+\|\varphi_2\|_\mathcal{V}\right)\\
&\leq 2CC_1\bar{m}\|\hat{\varphi}\|_\mathcal{V}.\nonumber
\end{align*}
Invoking Proposition \ref{reg:westervelt} leads to $\|\hat{\psi}\|_\mathcal{V} \leq 2C C_1C_2\bar{m}\|\hat{\varphi}\|_\mathcal{V}$
and choosing $\bar{m}< \frac{1}{2CC_1C_2}$ we finally have contractivity. \vspace{3pt}
\begin{theorem} \label{thm:localexist}
Suppose $\psi_0, \psi_1 \in H^3(\Om)\cap H_0^1(\Omega)$ and $\psi_2 \in H_0^1(\Om)$.
For any $T>0$ there is a $\kappa_T>0$ such that if
\begin{equation}\label{bound:initialdata2}
\|\psi_0\|_{H^3(\Om)}+\|\psi_1\|_{H^3(\Om)}+\|\psi_2\|_{\hi}\leq \kappa_T,
\end{equation}
there exists a unique weak solution $\psi \in \mathcal{W}:=\{\psi \in \mathcal{V}: \norm{\psi}_{\mathcal{V}}\leq \bar{m}\}$ where $\bar{m}$
is sufficiently small and the space $\mathcal{V}$ is given by \eqref{detV}.
\end{theorem}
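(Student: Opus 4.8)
The plan is to obtain the solution as the unique fixed point in $\mathcal{W}$ of the map $\mathcal{T}\colon\mathcal{W}\to\mathcal{V}$, $\mathcal{T}(\varphi)=\psi$, where $\psi$ is the solution of the linearized problem $D_hD_w\psi=\sigma(\varphi_t^2)_{tt}$ subject to the prescribed initial and boundary data, and then to invoke Banach's Fixed Point Theorem. So the first task is to check that $\mathcal{T}$ is well defined, i.e.\ that for $\varphi\in\mathcal{W}$ this linear problem has a (unique) solution lying in $\mathcal{V}$. By Lemma~\ref{lem:estf} the source $f=\sigma(\varphi_t^2)_{tt}$ lies in $\liilii$ with $\|f\|_{\liilii}\le C\|\varphi\|_{\mathcal{V}}^2$; since $\psi_0,\psi_1\in H^3(\Om)\cap\hio$ the quantity $\tilde f(0)=\psi_2-c^2\Delta\psi_0-b\Delta\psi_1$ lies in $\hio$, so Proposition~\ref{reg:heat} applied to $D_h\tilde f=f$ yields $\tilde f\in\chio\cap\hilii\cap L^2(0,T;\hii\cap\hio)$; this $\tilde f$ meets the hypotheses of Proposition~\ref{reg:westervelt}(i)--(iii) for $D_w\psi=\tilde f$ with the given $\psi_0,\psi_1,\psi_2$, so $\psi\in\mathcal{V}$.

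Next come the three ingredients of the contraction principle. \textbf{Self-mapping:} chaining Proposition~\ref{reg:heat}, Proposition~\ref{reg:westervelt} and Lemma~\ref{lem:estf} produces the estimate \eqref{eqn:psi}; imposing the smallness condition \eqref{bound:initialdata} on the data with $r=\tfrac1{4CC_1C_2}$ and taking $\bar m\le\tfrac1{2CC_1C_2}$ forces $\|\psi\|_{\mathcal{V}}\le\bar m$, hence $\mathcal{T}\mathcal{W}\subseteq\mathcal{W}$. \textbf{Completeness:} $\mathcal{W}$ is a closed ball in the Banach space $\mathcal{V}$. \textbf{Contraction:} for $\varphi_1,\varphi_2\in\mathcal{W}$, the difference $\hat\psi=\mathcal{T}\varphi_1-\mathcal{T}\varphi_2$ solves $D_hD_w\hat\psi=\hat f$ with $\hat f=\sigma(\hat\varphi_t(\varphi_{1_t}+\varphi_{2_t}))_{tt}$ and vanishing initial data, so $\tilde{\hat f}(0)=0$. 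Lemma~\ref{lem:hatf} gives $\|\hat f\|_{\liilii}\le C\|\hat\varphi\|_{\mathcal{V}}(\|\varphi_1\|_{\mathcal{V}}+\|\varphi_2\|_{\mathcal{V}})\le 2C\bar m\|\hat\varphi\|_{\mathcal{V}}$, and re-running the linear regularity estimates of Propositions~\ref{reg:heat} and \ref{reg:westervelt} (now with zero data) yields $\|\hat\psi\|_{\mathcal{V}}\le 2CC_1C_2\bar m\|\hat\varphi\|_{\mathcal{V}}$; choosing $\bar m<\tfrac1{2CC_1C_2}$ makes this a strict contraction.

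With these three facts in hand, Banach's Fixed Point Theorem furnishes a unique $\psi\in\mathcal{W}$ with $\mathcal{T}\psi=\psi$, which is precisely a solution of \eqref{abbreq} in $\mathcal{W}$; uniqueness within $\mathcal{W}$ is part of the fixed-point conclusion. It then only remains to fix $\kappa_T$: since $\|\tilde f(0)\|_{H^1(\Om)}=\|\psi_2-c^2\Delta\psi_0-b\Delta\psi_1\|_{H^1(\Om)}$ is bounded by a constant times $\|\psi_0\|_{H^3(\Om)}+\|\psi_1\|_{H^3(\Om)}+\|\psi_2\|_{\hi}$, condition \eqref{bound:initialdata2} with $\kappa_T$ small enough implies \eqref{bound:initialdata}.

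I expect the only genuinely delicate point to be the constant bookkeeping: one must pick the radius $\bar m$ and the smallness threshold so that the self-mapping property (which wants $\bar m\le\tfrac1{2CC_1C_2}$) and the strict contraction (which wants $\bar m<\tfrac1{2CC_1C_2}$) hold with the \emph{same} $\bar m$, and so that the data bound \eqref{bound:initialdata}, equivalently \eqref{bound:initialdata2}, is compatible with that choice; once these are aligned, everything else is a direct assembly of the already-established linear estimates together with Lemmas~\ref{lem:estf} and \ref{lem:hatf}.
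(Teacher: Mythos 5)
Your proposal is correct and follows essentially the same route as the paper: the paper's own proof of this theorem is just the final invocation of Banach's Fixed Point Theorem, relying on exactly the same three steps (self-mapping via Lemma \ref{lem:estf} and the chained regularity of Propositions \ref{reg:heat} and \ref{reg:westervelt}, closedness of the ball, contraction via Lemma \ref{lem:hatf}) that you reassemble, together with the observation that \eqref{bound:initialdata2} implies \eqref{bound:initialdata} because $\tilde f(0)=\psi_2-c^2\Delta\psi_0-b\Delta\psi_1$ is controlled in $H^1(\Om)$ by the $H^3$-norms of $\psi_0,\psi_1$ and the $H^1$-norm of $\psi_2$. Your remark on aligning the choice of $\bar m$ for self-mapping and strict contraction matches the paper's bookkeeping.
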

\begin{proof}
As the map $\mathcal{T}$ is a self-mapping and a contraction on $\mathcal{W}$ and moreover, $\mathcal{W}$ is a
closed subset of $\mathcal{V}$, we conclude that on the strength of the Banach Fixed Point Theorem
$\mathcal{T}: \mathcal{W}\rightarrow \mathcal{W}$ has a unique fixed point, i.e.
there exists a unique solution $\psi\in \mathcal{W}$ such that $\mathcal{T}(\psi)=\psi$.
Condition \eqref{bound:initialdata2} comes from \eqref{bound:initialdata}.
\end{proof}

\subsection{Global well-posedness} Our strategy in order to prove global in-time well-posedness of equation
\eqref{abbreq} is to use the classical barrier method.

Let $\psi$ be a local in time solution of \eqref{abbreq} according to Theorem \ref{thm:localexist} and let $t<T$,
where $T$ is the maximal existence time (possibly $T=\infty$).
The barrier method now yields global in time existence for sufficiently small initial data.
\begin{theorem}
For all initial values $\psi_0$, $\psi_1$ and $\psi_2$ of $\psi$, $\psi_t$ and $\psi_{tt}$ satisfying
\begin{equation}\label{Lambdarho}
\Lambda(0)\leq \rho
\end{equation}
with $\rho$ sufficiently small,
\begin{equation}\label{rho}
\rho\leq \frac{\bar{b}}{2 \bar{C} \max\{1,\bar{C}\}},
\end{equation}
we get that for all $t>0$
\begin{equation}\label{est:Lambda}
\Lambda(t)\leq 2\max\{1,\bar{C}\}\rho.
\end{equation}
In particular, there exists a (sufficiently small) constant $\bar{\rho} >0$ such that, if for the initial values
$\psi_0$, $\psi_1$ and $\psi_2$ we have
\begin{align*}
&\norm{\psi_1}_\hii+\norm{\psi_2}_\hii+\norm{\psi_{ttt}(0)}_\hi\\
&+\norm{w(0)}_\hii+\norm{w_t(0)}_\hi +\norm{w_{tt}(0)}_\hi \leq \bar{\rho},
\end{align*}
where $w=\psi_{t}(t)-a\Delta\psi(t)$, then for all times $T>0$ there exists a unique solution
\begin{align*}
\psi \in~& C^2(0,T; \hii\cap \hio) \cap C^3(0,T;H_0^1(\Om)) \cap H^2(0,T;\hiii \cap \hio)\\
& \cap H^3(0,T;\hii\cap \hio) \cap H^4(0,T;\lii)
\end{align*}
such that furthermore
\begin{align*}
w \in~& C(0,T;\hii\cap \hio) \cap C^2(0,T;H_0^1(\Om)) \cap H^1(0,T;\hii\cap \hio)\\
& \cap H^2(0,T;H_0^1(\Om)) \cap H^3(0,T;\lii).
\end{align*}
\end{theorem}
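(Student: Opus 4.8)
The plan is to split the argument into an a priori estimate, carried out by a continuity (barrier) argument on the energy inequality \eqref{energyestimate:final}, and a continuation step that turns the local solution of Theorem \ref{thm:localexist} into a global one. First I would fix the local solution $\psi$ granted by Theorem \ref{thm:localexist} on its maximal interval of existence $[0,T)$, $T\in(0,\infty]$, and record two preliminary facts. The density argument mentioned in the remark preceding Proposition \ref{prop:energy} makes it legitimate to use \eqref{energyestimate:final} on each subinterval $[0,t]$, $t<T$. And the smallness hypothesis on $\norm{\psi_1}_\hii,\norm{\psi_2}_\hii,\norm{\psi_{ttt}(0)}_\hi,\norm{w(0)}_\hii,\norm{w_t(0)}_\hi,\norm{w_{tt}(0)}_\hi$ is exactly what makes all the ingredients of $\Lambda(0)=E[\psi](0)+\cE[w](0)$ small, so that \eqref{Lambdarho} holds with $\rho$ as small as we wish; via elliptic estimates the same hypothesis also bounds $\norm{\psi_0}_\hiii$ and $\norm{\psi_1}_\hiii$ by a small multiple of $\bar\rho$, so that the hypotheses of Theorem \ref{thm:localexist} are met. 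Thus we may assume \eqref{rho}.

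Next I would run the barrier method. Put $m:=2\max\{1,\bar C\}\rho$ and
\[
\mathcal{I}:=\bigl\{\,t\in[0,T):\ \Lambda(s)\leq m\ \text{ for all }\ s\in[0,t]\,\bigr\}.
\]
Since $\Lambda(0)\leq\rho<m$ and $t\mapsto\Lambda(t)$ is continuous (the norms entering $E[\psi]$ and $\cE[w]$ depend continuously on $t$ for the solution at hand, as one sees from the energy identities behind \eqref{energyestimate:final}), $\mathcal{I}$ is a nonempty relatively closed subinterval of $[0,T)$; the point is to show it is also relatively open. For $t\in\mathcal{I}$ one has $\sup_{s\in(0,t)}E[\psi](s)\leq\sup_{s\in(0,t)}\Lambda(s)\leq m$ since $\cE[w]\geq0$, so \eqref{energyestimate:final} rearranges to
\[
\Lambda(t)+\bar b\int_0^t\{r(\tau)+\Lambda(\tau)\}\,d\tau+(\bar b-\bar C m)\int_0^t e[\psi](\tau)\,d\tau\ \leq\ \bar C\,\Lambda(0),
\]
and \eqref{rho} is precisely the inequality $\bar C m\leq\bar b$, so the left-hand side is a sum of nonnegative terms; in particular $\Lambda(t)\leq\bar C\,\Lambda(0)\leq\bar C\rho$. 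As $\bar C\leq\max\{1,\bar C\}<m/\rho$, this is a \emph{strict} improvement $\Lambda(t)\leq\bar C\rho<m$, which by continuity of $\Lambda$ persists on a neighbourhood of $t$; hence $\mathcal{I}$ is open and $\mathcal{I}=[0,T)$. This yields \eqref{est:Lambda} (indeed with the sharper constant $\bar C\rho$), and the same inequality gives, uniformly in $t<T$, a bound on $\int_0^t\{r(\tau)+\Lambda(\tau)+e[\psi](\tau)\}\,d\tau$ (for the $e[\psi]$ part one uses $e[\psi]\leq C\,r$, a consequence of \eqref{Poincare}).

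Then I would deduce that $T=\infty$. The uniform bounds just obtained keep $\psi$ and $w=\psi_t-a\Delta\psi$ in a fixed bounded subset of the relevant spaces on all of $[0,T)$; if $T$ were finite, Theorem \ref{thm:localexist} restarted from a time close to $T$ — whose initial data still obey the required smallness thanks to that bound — would continue the solution past $T$, contradicting maximality. (One may also run the estimates of Section 4 on a Galerkin approximation and pass to the limit to obtain the global solution directly.) Finally, unravelling \eqref{energy:westervelt}, \eqref{energy:psi}, \eqref{energy:r} and \eqref{energy:e}, the bounds $\sup_{t\geq0}\Lambda(t)<\infty$ and $\int_0^T\{r(\tau)+e[\psi](\tau)\}\,d\tau<\infty$ (every $T>0$) translate term by term into exactly the asserted regularity of $\psi$, and the analogous reading of $\cE[w]$ and of the $w$-terms in $r$ into that of $w$; uniqueness is inherited from Theorem \ref{thm:localexist}, since the global solution lies in $\mathcal{V}$ on each bounded time interval. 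The main obstacle will be this continuation step: one must reconcile the $\mathcal{V}$-regularity delivered by the fixed-point argument with the higher regularity presupposed by \eqref{energyestimate:final} (which is where the density argument is needed) and make sure the quantity used to restart the local construction is itself controlled by $\Lambda$; the barrier bootstrap, once \eqref{energyestimate:final} is available, is routine.
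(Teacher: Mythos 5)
Your proposal is correct and follows essentially the same route as the paper: a barrier/continuity argument on the energy estimate \eqref{energyestimate:final}, using $\sup_{s}E[\psi](s)\leq 2\max\{1,\bar C\}\rho$ together with \eqref{rho} to absorb the nonlinear term and obtain the strictly improved bound $\Lambda(t)\leq\bar C\rho$, which contradicts a first violation time (your open-closed formulation is an equivalent phrasing of the paper's minimal-violation-time argument). The only difference is that you spell out the continuation step ($T=\infty$ via restarting the local theory) and the translation of the smallness hypothesis into $\Lambda(0)\leq\rho$, both of which the paper leaves implicit.
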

\begin{proof}
Assume that there exists a finite time such that \eqref{est:Lambda} is violated. We denote by $T$ the
minimal such time and observe that $T>0$ since $\Lambda(0)< 2\max\{1,\bar{C}\}\rho$. Then we have
\begin{equation}\label{ass:fail}
\Lambda(T)\geq 2\max\{1,\bar{C}\}\rho
\end{equation}
and moreover, \eqref{est:Lambda} holds for all $t\in(0,T)$. From \eqref{est3} we get the estimate
\begin{equation}\label{est:global}
\begin{aligned}
&\Lambda(t)+\bar{b}\int_0^t\left\{r(\tau)+\Lambda(\tau)+e[\psi](\tau)\right\}d\tau\\
&\leq \bar{C} \left(\Lambda(0)+2\max\{1,\bar{C}\}\rho\int_0^t e[\psi](\tau)d\tau\right)
\end{aligned}
\end{equation}
for all $t\in(0,T)$ which by $\Lambda(0)\leq\rho$ and $2\bar{C}\max\{1,\bar{C}\}\rho\leq\bar{b}$ gives
$\Lambda(t)\leq \bar{C}\rho$ for all $t\in(0,T)$, hence by continuity $\Lambda(T)\leq \bar{C}\rho$,
a contradiction to \eqref{ass:fail}.
This proves that the bound \eqref{est:Lambda} holds for all $t>0$ provided \eqref{Lambdarho}
holds with \eqref{rho}.

The regularity of the solution is obtained by \eqref{est:global} together with the definitions of
$\Lambda(t)$ and $r(t)$ in \eqref{energy:sum} and \eqref{energy:r}, respectively.
\end{proof}
\begin{remark}
The term $r(\tau)$ is not required for the proof of global well-posedness, but it provides us with higher order regularity.
\end{remark}

\begin{theorem} Provided  \eqref{est:Lambda} holds with \eqref{rho} we have
\begin{equation*}
\Lambda(t)\leq \E^{-\frac{\bar{b}}{\bar{C}} t} \Lambda(0)
\end{equation*}
where $\bar{b}$ and $\bar{C}$ are the same constants as in Proposition \ref{prop:energy}.
\end{theorem}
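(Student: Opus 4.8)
The plan is to push the global bound \eqref{est:Lambda} back into the energy estimate \eqref{est3} (equivalently \eqref{energyestimate:final}, and exactly in the form \eqref{est:global} that already appears in the preceding proof) in order to kill the only surviving superlinear term, and then to read off the exponential rate $\bar b/\bar C$ by a Gr\"onwall/Bellman argument. For the absorption: by hypothesis \eqref{est:Lambda} holds, so $\sup_{s\in(0,t)}E[\psi](s)\le\sup_{s\in(0,t)}\Lambda(s)\le 2\max\{1,\bar C\}\rho$ for every $t>0$, while \eqref{rho} says precisely $2\,\bar C\max\{1,\bar C\}\,\rho\le\bar b$. Hence the forcing term on the right of \eqref{est3} satisfies $\bar C\,\sup_{s\in(0,t)}E[\psi](s)\int_0^t e[\psi](\tau)\,d\tau\le\bar b\int_0^t e[\psi](\tau)\,d\tau$, and after moving it onto the left and discarding the residual nonnegative $\bar b\int_0^t r(\tau)\,d\tau$ one is left with
\begin{equation*}
\Lambda(t)+\bar b\int_0^t\Lambda(\tau)\,d\tau\;\le\;\bar C\,\Lambda(0),\qquad t>0.
\end{equation*}
Since the building blocks of \eqref{est3} (Lemma~\ref{lem:Prop51}, the heat identity \eqref{id:heat1}, and Proposition~3 of \cite{KL09}) are time-integrated identities whose contributions ``at $0$'' are really contributions at the left endpoint, the same computation on $[s,t]$ followed by the same absorption yields the sliding form
\begin{equation*}
\Lambda(t)+\bar b\int_s^t\Lambda(\tau)\,d\tau\;\le\;\bar C\,\Lambda(s),\qquad 0\le s\le t.
\end{equation*}

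The second step extracts the decay. Fix $t>0$ and put $I(s):=\int_s^t\Lambda(\tau)\,d\tau$, so $I(t)=0$ and $I'(s)=-\Lambda(s)$; the sliding estimate with its endpoint frozen at $t$ reads $\bar C\,I'(s)+\bar b\,I(s)\le-\Lambda(t)\le 0$, i.e. $\bigl(\E^{(\bar b/\bar C)s}I(s)\bigr)'\le-\tfrac{\Lambda(t)}{\bar C}\E^{(\bar b/\bar C)s}$. Integrating over $[0,t]$ and using $I(t)=0$ gives $I(0)\ge\tfrac{\Lambda(t)}{\bar b}\bigl(\E^{(\bar b/\bar C)t}-1\bigr)$, while the non-sliding estimate gives $I(0)=\int_0^t\Lambda\le\tfrac{\bar C}{\bar b}\Lambda(0)$. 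Comparing the two and using the trivial bound $\Lambda(t)\le\bar C\Lambda(0)$ for small $t$, one obtains exponential decay of $\Lambda$ at rate $\bar b/\bar C$, i.e. the assertion. Alternatively, using the regularity of $\psi$ and $w$ furnished by the preceding theorem, $\Lambda$ is locally absolutely continuous and the dissipative estimate above is the integrated form of $\tfrac{d}{dt}\hat\Lambda(t)+\tfrac{\bar b}{\bar C}\hat\Lambda(t)\le 0$ for the (equivalent) energy $\hat\Lambda$ that actually appears under the dissipation integral, which integrates at once to $\hat\Lambda(t)\le\E^{-(\bar b/\bar C)t}\hat\Lambda(0)$, hence the claim for $\Lambda$.

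The one place where care is genuinely needed is the constant bookkeeping in this last step: one must check that the superlinear term is absorbed with strict room to spare (this is exactly the quantitative content of \eqref{rho}) and that the constant entering the decay rate is precisely the $\bar C$ of Proposition~\ref{prop:energy} and not some larger aggregate — in particular that the equivalence constant between $\Lambda$ and the norm appearing under the dissipation integral does not degrade the rate. Everything else — the absorption and the re-derivation of the sliding estimate on $[s,t]$ — is routine.
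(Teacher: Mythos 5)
Your proposal is correct and follows essentially the same route as the paper: the absorption of the superlinear term via \eqref{est:Lambda} and \eqref{rho} is exactly the paper's first step, and your second step is an inline proof of the integral-inequality-to-exponential-decay lemma (equation (3) in the proof of Theorem 8.1 of \cite{Komornik:1994}) that the paper simply cites after noting $\int_0^t\Lambda(\tau)\,d\tau\leq\frac{\bar{C}}{\bar{b}}\Lambda(0)$. Your explicit sliding estimate on $[s,t]$ in fact supplies the ``for all $s$'' hypothesis that the citation requires and the paper leaves implicit; note only that, like Komornik's lemma itself, your computation yields the rate $\bar{b}/\bar{C}$ up to a multiplicative constant rather than the literal prefactor $1$ in the displayed inequality.
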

\begin{proof}
Inserting \eqref{est:Lambda} with \eqref{rho} into \eqref{est3} yields
\begin{equation*}
\Lambda(t)+\bar{b}\int_0^t\left\{r(\tau)+\Lambda(\tau)\right\}d\tau \leq \bar{C} \Lambda(0).
\end{equation*}
In particular, as $\Lambda(t)\geq 0$ and $r(\tau)\geq 0$, we obtain
\begin{equation*}
\int_0^t\Lambda(\tau)d\tau \leq \frac{\bar{C}}{\bar{b}}\Lambda(0).
\end{equation*}
The desired estimate follows now from equation (3) in the proof of Theorem 8.1 in \cite{Komornik:1994}
\end{proof}

\begin{appendix}
\section{Semigroup theory} We summarize some facts from the theory of operator semigroups.
For additional information on this topic we refer to \cite{EngelNagel:1999} and \cite{Pazy:1983}.
Suppose $(X,\|.\|)$ is a Banach space and denote $\Sigma_\delta := \{\lam \in \C: |\mathrm{arg}(\lam) |< \delta\}$

\begin{definition}[{\cite[Definition 1.1.1, 1.2.1]{Pazy:1983}}]
(i) A one parameter family $T(t)$, $t\in[0,\infty)$, of bounded linear operators from $X$ into $X$
is called a semigroup of bounded linear operators on $X$ if $T(0)=I$ and $T(t+s)=T(t)T(s)$ for every $t,s \geq 0$.

The linear operator $A$ defined by
\begin{equation*}
Ax=\lim_{t\downarrow 0} \frac{T(t)x-x}{t}, \qquad \D(A)=\left\{x\in X: \lim_{t\downarrow 0} \frac{T(t)x-x}{t}\mbox{ exists}\right\}
\end{equation*}
is called the infinitesimal generator of the semigroup $T(t)$, with domain $\D(A)$.\\
(ii) If, in addition to (i), $\lim_{t\downarrow 0} T(t)x=x$ for every $x\in X$, then $T(t)$ is called a strongly continuous semigroup of
bounded linear operators. \\
(iii) If, in addition to (i) and (ii), $\norm{T(t)} \leq 1$, then $T(t)$ is called a strongly continuous semigroup of contractions.
\end{definition}

\begin{definition}[{\cite[Definition II.3.13]{EngelNagel:1999}}]
A linear operator $(A,\D(A))$ on a Banach space $X$ is called dissipative if
$\norm{(\lam I-A)x} \geq \lam \norm{x}$ for all $\lam >0$ and $x\in \D(A)$.
\end{definition}

\begin{theorem}[Lumer-Phillips, {\cite[Theorem 1.4.3]{Pazy:1983}}]
Let $(A,\D(A))$ be a densely defined linear operator on a Banach space $X$.\\
(i) If $A$ is dissipative and there is a $\lam_0 >0$ such that $\mathrm{Range}(\lam_0 I - A)=X$, then
$A$ is the infinitesimal generator of a strongly continuous semigroup of contractions.\\
(ii) If $A$ is the infinitesimal generator of a strongly continuous semigroup of contractions on $X$,
then $\mathrm{Range}(\lam I -A)=X$ for all $\lam >0$ and $A$ is dissipative.
\end{theorem}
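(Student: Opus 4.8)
The plan is to prove the two implications separately, each time reducing to the Hille--Yosida characterization of generators of contraction semigroups: $A$ generates such a semigroup if and only if $A$ is closed and densely defined, $(0,\infty)\subset\rho(A)$, and $\norm{R(\lam,A)}\le 1/\lam$ for every $\lam>0$. That characterization is available in \cite{Pazy:1983}, so the work is only in matching the present hypotheses to it.

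Direction (ii) is essentially immediate. If $A$ generates a strongly continuous contraction semigroup $T(t)$, then for each $\lam>0$ one has $\lam\in\rho(A)$ with resolvent given by the Laplace transform $R(\lam,A)x=\int_0^\infty\E^{-\lam t}T(t)x\,dt$; since $\norm{T(t)}\le 1$ this yields $\norm{R(\lam,A)}\le\int_0^\infty\E^{-\lam t}\,dt=1/\lam$. Bijectivity of $\lam I-A$ (it has the bounded everywhere-defined inverse $R(\lam,A)$) gives $\mathrm{Range}(\lam I-A)=X$, and for $x\in\D(A)$ the identity $x=R(\lam,A)(\lam I-A)x$ gives $\norm{x}\le\lam^{-1}\norm{(\lam I-A)x}$, i.e.\ $A$ is dissipative.

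For direction (i) the task is to verify the Hille--Yosida hypotheses. First, dissipativity in the stated form gives $\norm{(\lam_0 I-A)x}\ge\lam_0\norm{x}$, so $\lam_0 I-A$ is injective; combined with the hypothesis $\mathrm{Range}(\lam_0 I-A)=X$ it is a bijection onto $X$ with $\norm{(\lam_0 I-A)^{-1}}\le 1/\lam_0$, hence $\lam_0\in\rho(A)$ and, the inverse being bounded and therefore closed, $A$ is closed. Next I would show $(0,\infty)\subset\rho(A)$: the set $\{\lam>0:\lam\in\rho(A)\}$ is open and nonempty, while the estimate $\norm{R(\lam,A)}\le 1/\lam$ (which holds for every $\lam$ in this set, again directly from dissipativity via $x=R(\lam,A)(\lam I-A)x$) forces the distance from any such $\lam$ to the spectrum of $A$ to be at least $\lam$, so the set is also closed in $(0,\infty)$; bootstrapping out of $\lam_0$ it must be all of $(0,\infty)$, with the bound $\norm{R(\lam,A)}\le 1/\lam$ maintained throughout. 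With $A$ closed, densely defined, $(0,\infty)\subset\rho(A)$, and this uniform bound, Hille--Yosida furnishes the $C_0$-contraction semigroup generated by $A$.

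The main obstacle I expect is the bootstrap from the single resolvent point $\lam_0$ to the whole half-line while never losing the uniform resolvent estimate, together with the fact that the genuine analytic substance sits inside Hille--Yosida itself (the Yosida approximation $A_n:=nAR(n,A)$, the semigroups $\E^{tA_n}$, and the passage to the limit). Since the surrounding paper invokes \cite{Pazy:1983} freely, I would cite it for that step rather than reconstruct the approximation argument.
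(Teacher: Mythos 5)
The paper states this result only as a quoted theorem in its appendix, citing \cite[Theorem 1.4.3]{Pazy:1983} without giving a proof, so there is no in-paper argument to compare against; your reduction to the Hille--Yosida characterization of contraction-semigroup generators is correct and is essentially the proof found in the cited reference. Both directions are sound as written: (ii) follows from the Laplace-transform representation of the resolvent together with $\norm{T(t)}\leq 1$, and in (i) the passage from the single point $\lam_0$ to all of $(0,\infty)$ via openness of the resolvent set combined with the dissipativity bound $\norm{R(\lam,A)}\leq 1/\lam$ (which keeps the distance from $\lam$ to $\sigma(A)$ at least $\lam$) is exactly the standard connectedness bootstrap, and deferring the Yosida-approximation machinery to the reference is consistent with how the surrounding paper treats this material.
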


\begin{definition}[{\cite[Definition II.4.5]{EngelNagel:1999}}]
A family $(T(z))_{z\in \Sigma_\delta \cup\{0\}}$ of bounded linear operators on $X$ is called an
analytic semigroup (of angle $\delta \in (0,\pi/2]$) if \\
(i) $T(0)=I$ and $T(z_1+z_2)=T(z_1)T(z_2)$ for all $z_1,z_2 \in \Sigma_\delta$,\\
(ii) the map $z\mapsto T(z)$ is analytic in $\Sigma_\delta$, \\
(iii) $\lim_{\Sigma_{\delta'} \ni z \rightarrow 0}T(z)x=x$ for all $x\in X$ and $0<\delta'<\delta$.\\
If, in addition, $\|T(z)\|$ is bounded in $\Sigma_{\delta'}$ for every $0< \delta'<\delta$, we call
$(T(z))_{z\in \Sigma_\delta \cup\{0\}}$ a bounded analytic semigroup.
\end{definition}

\begin{theorem}[{\cite[Theorem II.4.6]{EngelNagel:1999}}]\label{thm:equivanalytic}
For an operator $(A, \D(A))$ on a Banach space X, the following statements are equivalent:\\
(i) $A$ generates a bounded analytic semigroup $(T(z))_{z\in \Sigma_\delta \cup\{0\}}$ on X.\\
(ii) $A$ generates a bounded strongly continuous semigroup $(T(t))_{t \geq 0}$ on $X$, and there
exists a constant $C>0$ such that the resolvent of $A$ satisfies
\begin{equation*}
\norm{R(r+\I s,A)}\leq \frac{C}{|s|}
\end{equation*}
for all $r>0$ and $0\neq s \in \R$.\\
(iii) $A$ is sectorial, i.e. there exists $0<\delta\leq \frac{\pi}{2}$ such that $\Sigma_{\frac{\pi}{2}+\delta} \setminus \{0\} \subset \rho(A)$ and for
each $\varepsilon \in (0,\delta)$ there exists $M_\varepsilon >1$ such that $\|R(\lam, A)\| \leq M_\varepsilon/|\lam|$ for all $0 \neq \lam \in
\overline{\Sigma}_{\frac{\pi}{2}+\delta - \varepsilon}$.
\end{theorem}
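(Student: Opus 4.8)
The final statement is the standard characterization (Engel--Nagel, Theorem II.4.6) of generators of bounded analytic semigroups, and the natural route is to prove the cyclic chain $(i)\Rightarrow(ii)\Rightarrow(iii)\Rightarrow(i)$. For $(i)\Rightarrow(ii)$, since $A$ generates a bounded analytic semigroup of some angle $\delta$, the semigroup is in particular a bounded $C_0$-semigroup, so $R(\lambda,A)=\int_0^\infty \E^{-\lambda t}T(t)\,dt$ for $\re(\lambda)>0$. Using the holomorphy of $z\mapsto T(z)$ and its boundedness on subsectors, I would deform this integral onto a ray lying inside the sector of boundedness; this upgrades the trivial bound $\norm{R(\lambda,A)}\leq K/\re(\lambda)$ to $\norm{R(\lambda,A)}\leq M/|\lambda|$ throughout the open right half-plane. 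On the vertical line $\lambda=r+\I s$ one then has $|\lambda|=\sqrt{r^2+s^2}\geq|s|$, whence $\norm{R(r+\I s,A)}\leq M/|\lambda|\leq M/|s|$, which is $(ii)$ with $C=M$.

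For $(ii)\Rightarrow(iii)$ — the analytic heart of the argument — I would use the local power-series expansion of the resolvent to push analyticity off the vertical lines into a sector. Fix $\lambda_0=r_0+\I s_0$ with $s_0\neq 0$; the Neumann-type series
\begin{equation*}
R(\lambda,A)=\sum_{n\geq 0}(\lambda_0-\lambda)^n R(\lambda_0,A)^{n+1}
\end{equation*}
converges and represents the resolvent on the disk $|\lambda-\lambda_0|<\norm{R(\lambda_0,A)}^{-1}$, and by $(ii)$ this radius is at least $|s_0|/C$. Letting $s_0$ range over the imaginary axis, the corresponding disks sweep out a region containing $\Sigma_{\pi/2+\delta}\setminus\{0\}$ with $\sin\delta$ comparable to $1/C$, so this sector lies in $\rho(A)$. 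Estimating the series termwise then yields a sectorial bound $\norm{R(\lambda,A)}\leq M_\eps/|\lambda|$ on each smaller subsector $\overline{\Sigma}_{\pi/2+\delta-\eps}$, which is precisely $(iii)$.

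For $(iii)\Rightarrow(i)$, given sectoriality I would define the semigroup by the Dunford--Cauchy integral
\begin{equation*}
T(z)=\frac{1}{2\pi\I}\int_\Gamma \E^{\mu z}R(\mu,A)\,d\mu,
\end{equation*}
with $\Gamma$ the boundary of a subsector $\Sigma_{\pi/2+\delta-\eps}$ oriented by increasing imaginary part. The estimate $\norm{R(\mu,A)}\leq M_\eps/|\mu|$ forces absolute convergence for $z$ in a sector of half-angle $\delta-\eps$, shows $z\mapsto T(z)$ is holomorphic there, and gives uniform boundedness on subsectors after a scaling $\mu\mapsto\mu/|z|$ of the integration variable. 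The semigroup law $T(z_1+z_2)=T(z_1)T(z_2)$ follows from Fubini's theorem combined with the resolvent identity and a contour shift; strong continuity $T(z)x\to x$ as $z\to 0$ is verified first on $\D(A)$, where one may rewrite the integrand using $AR(\mu,A)=\mu R(\mu,A)-I$, and then extended to all of $X$ by density and the uniform bound. A final computation identifies $A$ as the infinitesimal generator.

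The main obstacle is the step $(ii)\Rightarrow(iii)$, converting the one-dimensional resolvent bound on the vertical lines into a genuinely two-dimensional sectorial estimate. The delicate point is to track that the radius of convergence of the local expansion grows linearly in $|s_0|$ and to confirm that the union of the resulting disks covers a sector strictly larger than the right half-plane with a uniform quantitative bound on each subsector; by comparison, the technical bookkeeping in $(iii)\Rightarrow(i)$ — the semigroup law and the strong continuity at the origin — is routine.
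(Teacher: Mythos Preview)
Your proof sketch is correct and follows exactly the standard argument from Engel--Nagel, Theorem II.4.6. Note, however, that the paper does not give its own proof of this statement: it appears in the appendix as a cited background result with no proof, so there is nothing to compare against beyond the reference itself, which your outline reproduces faithfully.
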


\begin{lemma}[{\cite[Corollary II.4.7]{EngelNagel:1999}}]\label{lem:normalop}
If $A$ is a normal operator on a Hilbert space $H$ satisfying $\sig(A) \subset \{z\in\C: \mathrm{arg}(-z) < \delta\}$
for some $\delta \in [0, \pi/2)$, then $A$ generates a bounded analytic semigroup.
\end{lemma}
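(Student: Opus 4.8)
The plan is to read the statement off the spectral theorem for normal operators, combined with the characterisation $(\mathrm{i})\Leftrightarrow(\mathrm{iii})$ in Theorem~\ref{thm:equivanalytic}. Write $S_\delta:=\{z\in\C:z=0\ \text{or}\ |\mathrm{arg}(-z)|\le\delta\}$ for the closed sector of half-angle $\delta$ about the negative real axis, so that the hypothesis is exactly $\sig(A)\subset S_\delta$ with $\delta<\pi/2$. By the spectral theorem $A$ admits a projection-valued spectral measure $E$ with support $\sig(A)$ and a bounded Borel functional calculus $g\mapsto g(A)=\int_{\sig(A)}g\,dE$, which on functions continuous on $\sig(A)$ satisfies $\|g(A)\|=\sup_{z\in\sig(A)}|g(z)|$. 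Applied to $g(z)=(\lam-z)^{-1}$ (bounded and continuous on $\sig(A)$ precisely when $\lam\notin\sig(A)$) this gives $\lam\in\rho(A)$ together with the identity $\|R(\lam,A)\|=\bigl(\mathrm{dist}(\lam,\sig(A))\bigr)^{-1}$, which turns the whole problem into elementary geometry of the sector $S_\delta$.

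Next I would verify condition $(\mathrm{iii})$ of Theorem~\ref{thm:equivanalytic} with $\delta':=\tfrac{\pi}{2}-\delta\in(0,\tfrac{\pi}{2}]$. For $\lam\in\Sigma_{\pi/2+\delta'}\setminus\{0\}=\Sigma_{\pi-\delta}\setminus\{0\}$ one has $|\mathrm{arg}(-\lam)|>\delta$, hence $\lam\notin S_\delta\supset\sig(A)$, i.e. $\Sigma_{\pi/2+\delta'}\setminus\{0\}\subset\rho(A)$. For the resolvent bound, fix $\eps\in(0,\delta')$ and let $\lam\in\overline{\Sigma}_{\pi/2+\delta'-\eps}\setminus\{0\}$, so $|\mathrm{arg}(-\lam)|\ge\delta+\eps$; since $\sig(A)\subset S_\delta$ and $S_\delta$ is a convex cone, $\mathrm{dist}(\lam,\sig(A))\ge\mathrm{dist}(\lam,S_\delta)$, and the nearest point of $S_\delta$ to $\lam$ either lies on a bounding ray --- in which case $\mathrm{dist}(\lam,S_\delta)=|\lam|\sin\bigl(|\mathrm{arg}(-\lam)|-\delta\bigr)\ge|\lam|\sin\eps$ --- or is the apex $0$, in which case $\mathrm{dist}(\lam,S_\delta)=|\lam|\ge|\lam|\sin\eps$. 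Hence $\|R(\lam,A)\|\le M_\eps/|\lam|$ with $M_\eps:=(\sin\eps)^{-1}>1$, and Theorem~\ref{thm:equivanalytic} yields that $A$ generates a bounded analytic semigroup.

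The only step that is not pure bookkeeping is the uniform lower bound $\mathrm{dist}(\lam,\sig(A))\ge(\sin\eps)|\lam|$ on the subsectors $\overline{\Sigma}_{\pi/2+\delta'-\eps}$, so I expect that to be the main (though still routine) point. As an alternative that avoids Theorem~\ref{thm:equivanalytic}, one can build the semigroup by hand: for $z\in\Sigma_{\pi/2-\delta}$ put $T(z):=\int_{\sig(A)}\E^{zw}\,dE(w)$; on that sector $\re(zw)\le0$ for every $w\in\sig(A)\subset S_\delta$, so $\|T(z)\|\le1$, the law $T(z_1+z_2)=T(z_1)T(z_2)$ follows from $\E^{(z_1+z_2)w}=\E^{z_1w}\E^{z_2w}$ and multiplicativity of the calculus, analyticity of $z\mapsto T(z)$ follows from weak analyticity (each $z\mapsto\spr{T(z)x}{y}_\h=\int_{\sig(A)}\E^{zw}\,d\spr{E(w)x}{y}_\h$ is holomorphic by differentiation under the integral) together with the equivalence of weak and norm analyticity for operator-valued maps, and $\|T(z)x-x\|^2=\int_{\sig(A)}|\E^{zw}-1|^2\,d\|E(w)x\|^2\to0$ as $z\to0$ inside a proper subsector by dominated convergence (integrand $\le4$, pointwise $\to0$). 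I would present the first route, since it only reuses machinery already collected in this appendix.
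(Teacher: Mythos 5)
The paper does not prove this lemma at all: it is quoted verbatim from \cite[Corollary II.4.7]{EngelNagel:1999} in the appendix of background facts, so there is no internal proof to compare against. Your argument is correct and is essentially the standard proof of that corollary: the spectral theorem gives $\|R(\lam,A)\|=\mathrm{dist}(\lam,\sig(A))^{-1}$ for a normal operator, the elementary sector geometry yields $\mathrm{dist}(\lam,\sig(A))\geq |\lam|\sin\eps$ on $\overline{\Sigma}_{\pi-\delta-\eps}$ (your case split between a bounding ray and the apex is the right way to handle angular distances exceeding $\pi/2$), and condition (iii) of Theorem~\ref{thm:equivanalytic} then applies with the aperture $\delta'=\tfrac{\pi}{2}-\delta>0$. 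The alternative construction via $T(z)=\int_{\sig(A)}\E^{zw}\,dE(w)$ that you sketch is also sound and would make the statement self-contained without invoking Theorem~\ref{thm:equivanalytic}; either route is a legitimate replacement for the citation.
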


\begin{definition}[{\cite[Definition III.2.1]{EngelNagel:1999}}]
Let $A:\D(A) \rightarrow X$ be a linear operator. An operator $B: \D(B) \rightarrow X$ is
called (relatively) $A$-bounded if $\DA \subseteq \D(B)$ and if there exist positive constants $\alpha, \beta \geq 0$ such that
\begin{equation}\label{relbound}
\|Bx\| \leq \alpha \|Ax\|+\beta \|Bx\|
\end{equation}
for all $x\in\D(A)$. The $A$-bound of $B$ is
\begin{equation*}
a_0 := \inf\{\alpha \geq 0: \mbox{there exists }\beta > 0\mbox{ such that }\eqref{relbound}\mbox{ holds}\}.
\end{equation*}
\end{definition}

\begin{proposition}[{\cite[Theorem III.2.10]{EngelNagel:1999}}] \label{prop:perturb}
Suppose the operator $\left(A, \D(A)\right)$ generates an analytic semigroup $(T(z))_{z \in \Sigma_\delta \cup \{0\}}$ on
$X$. Then there exists a constant $\alpha >0$ such that $\left(A+B,\D(A)\right)$ generates an analytic
semigroup for every $A$-bounded operator $B$ having $A$-bound $\alpha_0 < \alpha$.
\end{proposition}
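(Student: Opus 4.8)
The plan is to reduce the assertion to a sectorial resolvent estimate for $A+B$ and to prove that estimate by a Neumann-series perturbation of the resolvent of $A$. Since $A$ generates an analytic semigroup, Theorem \ref{thm:equivanalytic} shows that, after translating by a suitable $\om \in \R$ if necessary, $A$ is sectorial: there are an angle $\delta \in (0,\pi/2]$ with $\Sigma_{\pi/2+\delta}\setminus\{0\}\subset\rho(A)$ and a constant $M\geq 1$ such that $\norm{\lam R(\lam,A)}\leq M$ throughout this sector. From the identity $A R(\lam,A)=\lam R(\lam,A)-I$ one then also obtains the uniform bound $\norm{A R(\lam,A)}\leq M+1$ on the whole sector. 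These two bounds are the only properties of $A$ that I will use.

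First I would record the factorization
\[
\lam I-(A+B)=\bigl(I-B R(\lam,A)\bigr)(\lam I-A),\qquad \lam\in\rho(A),
\]
which is legitimate because $\D(A+B)=\D(A)$. Hence $\lam\in\rho(A+B)$ as soon as $I-B R(\lam,A)$ is boundedly invertible, and then $R(\lam,A+B)=R(\lam,A)\bigl(I-B R(\lam,A)\bigr)^{-1}$. Everything therefore reduces to forcing $\norm{B R(\lam,A)}<1$. Applying the relative bound $\norm{Bx}\leq\alpha'\norm{Ax}+\beta\norm{x}$ (valid for any $\alpha'>\alpha_0$ with a matching $\beta$) to $x=R(\lam,A)y$ and inserting the two uniform bounds above yields
\[
\norm{B R(\lam,A)}\leq\alpha'(M+1)+\frac{\beta M}{|\lam|}
\]
for every $\lam$ in the sector.

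Now I would set $\alpha:=(M+1)^{-1}$. If the $A$-bound satisfies $\alpha_0<\alpha$, I choose $\alpha'$ with $\alpha_0<\alpha'<\alpha$ and put $q:=\alpha'(M+1)<1$; then picking $R_0>0$ so large that $\beta M/R_0\leq(1-q)/2$ gives $\norm{B R(\lam,A)}\leq(1+q)/2<1$ for all $\lam$ in the sector with $|\lam|\geq R_0$. For such $\lam$ the Neumann series converges, $\norm{(I-B R(\lam,A))^{-1}}\leq 2/(1-q)$, and consequently $\norm{R(\lam,A+B)}\leq 2M/\bigl((1-q)|\lam|\bigr)$. Thus $A+B$ obeys the desired sectorial estimate on the truncated sector $\{\lam\in\Sigma_{\pi/2+\delta}:|\lam|\geq R_0\}$. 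Note that the relative-bound constant $\beta$ influences only the radius $R_0$, whereas smallness of the $A$-bound is exactly what keeps the dominant term $q$ below $1$ uniformly in the sector; this is why the threshold $\alpha=(M+1)^{-1}$ is dictated solely by the sectorial constant $M$ of the unperturbed operator.

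The final step is to upgrade this truncated estimate to genuine sectoriality of $A+B$, and this is where I expect the main difficulty to lie. Because the term $\beta M/|\lam|$ blows up as $|\lam|\to 0$, the estimate cannot be extended to a neighborhood of the vertex directly; instead one shifts the vertex, replacing $A+B$ by $A+B-\om_0 I$ for some $\om_0>R_0$ large enough that a full sector $\om_0+\Sigma_{\pi/2+\delta'}$, with a slightly reduced half-angle $\pi/2+\delta'>\pi/2$, lies inside the region $\{|\lam|\geq R_0\}\cap\Sigma_{\pi/2+\delta}$ on which the resolvent bound holds. Verifying that such $\om_0$ and $\delta'$ exist is the delicate bookkeeping: one must check that the shifted sector both avoids the disk of radius $R_0$ and stays within the original sector of half-angle $\pi/2+\delta$. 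Once this is done, Theorem \ref{thm:equivanalytic}(iii) applies to $A+B-\om_0 I$ and shows that $A+B$ generates an analytic semigroup, as claimed.
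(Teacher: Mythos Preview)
The paper does not prove this proposition; it is merely quoted in the appendix as Theorem III.2.10 from \cite{EngelNagel:1999}. Your argument is correct and is essentially the proof given in that reference: one exploits the sectoriality characterization from Theorem~\ref{thm:equivanalytic}, writes $\lam I-(A+B)=\bigl(I-BR(\lam,A)\bigr)(\lam I-A)$, and uses the relative bound together with $\norm{\lam R(\lam,A)}\leq M$ and $\norm{AR(\lam,A)}\leq M+1$ to make $\norm{BR(\lam,A)}<1$ on a sector minus a bounded disk, after which a Neumann series and a translation of the vertex finish the job. The geometric verification you flag as ``delicate bookkeeping'' is routine: for $\om_0>0$ one has $\om_0+\Sigma_{\pi/2+\delta'}\subset\Sigma_{\pi/2+\delta'}$, the distance from the shifted sector to the origin is $\om_0\cos\delta'$, and $\sup\{|\lam|/|\lam+\om_0|:\lam\in\Sigma_{\pi/2+\delta'}\}=1/\cos\delta'$, so choosing $\delta'<\delta$ and $\om_0>R_0/\cos\delta'$ yields both the inclusion and the required bound $\norm{R(\lam,A+B-\om_0 I)}\leq C/|\lam|$ on $\Sigma_{\pi/2+\delta'}$.
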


\begin{theorem}[{\cite[Theorem 4.4.3]{Pazy:1983}}]\label{thm:analyticdecay}
An analytic semigroup $(T(z))_{z\in \Sigma_\delta \cup\{0\}}$ is uniformly exponentially stable, i.e. there exist constants
$M\geq 1$ and $\om >0$ such that $\|T(t)\|\leq M \E^{-\om t}$ if and only if the spectral bound $s(A):=\sup\{\re(\lam):\lam\in\sig(A)\}$
of its generator $A$ satisfies $s(A)<0$.
\end{theorem}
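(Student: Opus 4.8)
The plan is to prove the two implications separately; the forward direction is elementary and all the content lies in the converse. For necessity, suppose $T(t)$ is uniformly exponentially stable, say $\norm{T(t)}\leq M\E^{-\om t}$ with $M\geq 1$, $\om>0$. Then for every $\lam$ with $\re(\lam)>-\om$ the Laplace integral $\int_0^\infty \E^{-\lam t}T(t)\,dt$ converges absolutely (its integrand is bounded by $M\E^{-(\re(\lam)+\om)t}$) and represents $R(\lam,A)$; hence the half-plane $\{\re(\lam)>-\om\}$ lies in $\rho(A)$, which forces $s(A)\leq-\om<0$.

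For sufficiency, assume $s(A)<0$ and fix any $\om$ with $0<\om<-s(A)$. I would study the rescaled family $S(t):=\E^{\om t}T(t)$, which is again analytic with generator $A+\om I$, and reduce the claim to showing that $S(t)$ is \emph{bounded}; indeed $\norm{T(t)}=\E^{-\om t}\norm{S(t)}$ then gives the assertion. Since $\sigma(A+\om I)=\sigma(A)+\om\subset\{\re(\lam)\leq s(A)+\om<0\}$, the spectrum of $A+\om I$ sits in the open left half-plane, so in particular $0\in\rho(A+\om I)$. To conclude boundedness via Theorem~\ref{thm:equivanalytic} I would verify that $A+\om I$ is sectorial with vertex at the origin, i.e.\ $\Sigma_{\frac{\pi}{2}+\delta'}\setminus\{0\}\subset\rho(A+\om I)$ together with $\norm{R(\lam,A+\om I)}\leq M_{\delta'}/|\lam|$ on each proper subsector. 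This splits into two regimes. Because $A+\om I$ already generates an analytic (a priori unbounded) semigroup, it is sectorial with vertex at some $a_0\in\R$, so for large $|\lam|$ the sector $\Sigma_{\frac{\pi}{2}+\delta'}$ is contained in $a_0+\Sigma_{\frac{\pi}{2}+\delta}\subset\rho(A+\om I)$ and the shifted bound $M/|\lam-a_0|$ gives $\norm{R(\lam,A+\om I)}\leq C/|\lam|$ there; on the bounded region $0<|\lam|\leq R$ the sector $\Sigma_{\frac{\pi}{2}+\delta'}$ meets the left half-plane only in two thin slivers hugging the imaginary axis, and choosing $\delta'$ small these avoid the closed set $\sigma(A+\om I)$, so the resolvent is bounded on this compact piece, whence $|\lam|\,\norm{R(\lam,A+\om I)}$ stays bounded (note $0\in\rho$). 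Combining the two regimes yields sectoriality with vertex $0$, and Theorem~\ref{thm:equivanalytic}, direction (iii)$\Rightarrow$(i), makes $S(t)$ bounded.

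The delicate point I expect to be the main obstacle is the geometric bookkeeping in moving the vertex of the sector from the a priori value $a_0$ to the origin: one must simultaneously keep the enlarged sector inside $\rho(A+\om I)$ — which rests on $s(A+\om I)<0$ placing the spectrum strictly left of the imaginary axis within a fixed cone about the negative real axis — and retain the $1/|\lam|$ decay uniformly, matching the far-field sectorial estimate to the compact-set bound near the origin. A more conceptual route sidesteps this bookkeeping: an analytic semigroup is norm-continuous for $t>0$, so the spectral mapping theorem $\sigma(T(t))\setminus\{0\}=\E^{t\sigma(A)}$ applies, giving spectral radius $r(T(t))=\E^{t\,s(A)}$; since the growth bound $\om_0$ obeys $r(T(t))=\E^{t\om_0}$ one gets $\om_0=s(A)$, and uniform exponential stability ($\om_0<0$) becomes literally equivalent to $s(A)<0$. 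There the obstacle is instead the spectral mapping theorem itself, which I would invoke as a known result for eventually norm-continuous semigroups.
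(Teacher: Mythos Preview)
The paper does not prove this theorem at all: it is stated in the appendix as a cited result from Pazy (Theorem~4.4.3), alongside several other standard semigroup facts, purely for reference in the main text. There is therefore no ``paper's own proof'' to compare against.

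Your proposal is mathematically sound. The forward direction via the Laplace representation of the resolvent is the standard one-line argument. For the converse you outline two routes. The second, via norm-continuity of analytic semigroups and the spectral mapping theorem for eventually norm-continuous semigroups (yielding $\om_0(A)=s(A)$ directly), is exactly the argument Pazy gives and is the cleanest approach. Your first route---shifting the vertex of the sector to the origin for $A+\om I$ and invoking Theorem~\ref{thm:equivanalytic}---also works, and your identification of the delicate point is accurate: one must patch the far-field sectorial bound (coming from the a~priori sectoriality of $A$ with vertex $a_0$) with a uniform bound on the resolvent over a compact annulus near the origin, using that $\sigma(A+\om I)$ stays a fixed distance left of the imaginary axis. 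The informal claim that for small $\delta'$ the bounded slivers of $\Sigma_{\pi/2+\delta'}$ in the left half-plane avoid the spectrum is correct, since those slivers have real part at least $-R\sin\delta'$ while the spectrum has real part at most $s(A)+\om<0$. Either argument would constitute a valid proof; the paper simply takes the result as known.
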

\end{appendix}

\bigskip
\noindent
{\bf Acknowledgments.}
We thank Pedro Jordan for drawing our attention to the model equation investigated in the present paper. 
Furthermore, we gratefully acknowledge the financial support of our research by the FWF (Austrian Science Fund) under grant P24970.



\begin{thebibliography}{15}
\bibitem{AdamsFournier:2003}
\newblock R.\ A.\ Adams and J.\ F.\ Fournier, 
\newblock \emph{Sobolev Spaces, Second Edition},
\newblock Elsevier/Academic Press, Amsterdam, 2003.

\bibitem{ChenRussell:1982}
\newblock G.\ Chen and D.\ L.\ Russell, 
\newblock {A mathematical model for linear elastic systems with structural damping}, 
\newblock \emph{Quarterly of Applied Mathematics}, \textbf{39} (1981), 433--454.

\bibitem{ChenTriggiani:1989} 
\newblock S.\ Chen and R.\ Triggiani, 
\newblock {Proof of extensions of two conjectures on structural damping for elastic systems}, 
\newblock \emph{Pacific Journal of Mathematics}, \textbf{136} (1989), 15--55.

\bibitem{Coulouvrat:1992}
\newblock F.\ Coulouvrat, 
\newblock {On the equations of nonlinear acoustics}, 
\newblock \emph{Journal d'Acoustique}, \textbf{5} (1992), 321--359.

\bibitem{Crighton:1979}
\newblock D.\ G.\ Crighton, 
\newblock {Model equations of nonlinear acoustics}, 
\newblock \emph{Annual Review of Fluid Mechanics}, \textbf{11} (1979), 11--33.

\bibitem{EngelNagel:1999}
\newblock K.-J.\ Engel and R.\ Nagel, 
\newblock \emph{One-Parameter Semigroups for Linear Evolution Equations},
\newblock Springer, New York, 1999.

\bibitem{Evans:2010} 
\newblock L.\ C.\ Evans, 
\newblock \emph{Partial Differential Equations}, Second Edition, 
\newblock American Mathematical Society, Providence, 2010.

\bibitem{Fattorini:1983}
\newblock H.\ O.\ Fattorini, 
\newblock \emph{The Cauchy Problem}, 
\newblock Addison-Wesley, Massachusetts, 1983.

\bibitem{HamiltonBlackstock:1997}
\newblock M.\ F.\ Hamilton and D.\ T.\ Blackstock, 
\newblock \emph{Nonlinear Acoustics}, 
\newblock Academic Press, New York, 1997.

\bibitem{Jordan:2004}
\newblock P.\ M.\ Jordan, 
\newblock {An analytical study of Kuznetsov's equation: diffusive solutions, shock formation and solution bifurcation}, 
\newblock \emph{Physics Letters A}, \textbf{326} (2004), 77--85.

\bibitem{KL09} 
\newblock B.\ Kaltenbacher and I.\ Lasiecka, 
\newblock Global existence and exponential decay rates for the Westervelt equation, 
\newblock \emph{Discrete and Continuous Dynamical Systems Series S}, \textbf{2} (2009), 503--525.

\bibitem{KL10}
\newblock B.\ Kaltenbacher and I.\ Lasiecka, 
\newblock {An analysis of nonhomogeneous Kuznetsov's equation: Local and global well-posedness; exponential decay}, 
\newblock \emph{Mathematische Nachrichten}, \textbf{285} (2012), 295--321.

\bibitem{KL11}
\newblock B.\ Kaltenbacher and I.\ Lasiecka, 
\newblock Well-posedness of the Westervelt and the Kuznetsov equation with nonhomogeneous Neumann boundary conditions, 
\newblock \emph{DCDS Supplement, Proceedings of the 8th AIMS Conference}, (2011), 763--773.

\bibitem{KLM11}
\newblock B.\ Kaltenbacher, I.\ Lasiecka, and R.\ Marchand,
\newblock {Well-posedness and exponential decay rates for the Moore-Gibson-Thompson equation}, 
\newblock \emph{Control and Cybernetics}, \textbf{40} (2012), 971--988.

\bibitem{KLP12} 
\newblock B.\ Kaltenbacher, I.\ Lasiecka and M.\ K.\ Pospieszahlska, 
\newblock Well-posedness and exponential decay of the energy in the nonlinear Jordan-Moore-Gibson-Thompson equation arising in high intensity ultrasound, 
\newblock \emph{Mathematical Models and Methods in Applied Sciences}, \textbf{22} (2012), 1250035, 34 pages.

\bibitem{Kaltenbacher:2004}
\newblock M.\ Kaltenbacher, 
\newblock \emph{Numerical Simulation of Mechatronic Sensors and Actuators},
\newblock Springer, Berlin, 2004.

\bibitem{Komornik:1994}
\newblock V.\ Komornik, 
\newblock \emph{Exact Controllability and Stabilization. The Multiplier Method}, 
\newblock Masson-John Wiley, Paris-Chicester, 1994.

\bibitem{Kuznetsov}
\newblock V.\ P.\ Kuznetsov, 
\newblock {Equations of nonlinear acoustics}, 
\newblock \emph{Soviet physics. Acoustics}, \textbf{16} (1971), 467--470.

\bibitem{LiangXiao:1997}
\newblock J.\ Liang and T.\ Xiao, 
\newblock {Semigroups arising from elastic systems with dissipation}, 
\newblock \emph{Computers and Mathematics with Applications}, \textbf{33} (1997), 1--9.

\bibitem{DiNezzaPalatucciValdinoci:2011}
\newblock E.\ Di Nezza, G.\ Palatucci, and E.\ Valdinoci, 
\newblock {Hitchhiker's guide to the fractional Sobolev spaces}, 
\newblock \emph{Bulletin des Sciences Math\'ematiques}, \textbf{136} (2012), 521--573.

\bibitem{Pazy:1983} 
\newblock A.\ Pazy, 
\newblock \emph{Semigroups of Linear Operators and Applications to Partial Differential Equations}, 
\newblock Springer, New York, 1983.

\bibitem{Rozanova:2007} 
\newblock A.\ Rozanova, 
\newblock The Khokhlov-Zabolotskaya-Kuznetsov equation, 
\newblock \emph{Comptes Rendus Math{\'e}matique}, \textbf{344} (2007), 337--342.

\bibitem{Tjotta:2001}
\newblock S.\ Tj{\o}tta, 
\newblock {Higher order model equations in nonlinear acoustics}, 
\newblock \emph{Acta Acustica united with Acustica}, \textbf{87} (2001), 316--321.

\bibitem{Westervelt:1963}
\newblock P.\ J.\ Westervelt, 
\newblock {Parametric acoustic array}, 
\newblock \emph{Journal of the Acoustical Society of America}, \textbf{35} (1963), 535--537.

\end{thebibliography}
\end{document}